\newcommand{\lj}[1]{{\color{black}#1}}
\newcommand{\po}{\left(}
\newcommand{\pf}{\right)}
\newcommand{\E}{\mathbb E}
\newcommand{\R}{\mathbb R} 
\newcommand{\T}{\mathbb T} 
\newcommand{\C}{\mathcal C}
\newcommand{\D}{\mathcal D}
\newcommand{\X}{\mathcal X}
\newcommand{\N}{\mathbb N} 
\newcommand{\B}{\mathcal B}
\newcommand{\dd}{\text{d}}
\newcommand{\na}{\nabla}
\newcommand{\Hd}{\mathcal H_{\delta,\ell}}
\newtheorem{thm}{Theorem}
\newtheorem{assu}{Assumption}
\newtheorem{lem}[thm]{Lemma}
\newtheorem{prop}[thm]{Proposition}
\newtheorem{cor}[thm]{Corollary}
\title{Weak error expansion of a numerical scheme with rejection for singular Langevin process}
\author{Lucas Journel}
\date{ }
\begin{document}

\maketitle

\begin{abstract}
    We show expansion \textit{\`a la Talay-Tubaro} of a numerical scheme with rejection for the Langevin process in the case of a singular potential. In order to achieve this, we provide estimates on the associated semi-group of the process. The class of admissible potentials includes the Lennard-Jones interaction with confinement, which is an important potential in molecular dynamics and served as the primary motivation for this study.
\end{abstract}

\section{Introduction}\label{sec:intro}

Many physical systems, can be described by a Hamiltonian (or energy) of the form:
\begin{equation}
    H(x,y) = U(x) + \frac{1}{2}|y|^2,
\end{equation}
where $U:\R^d\to \R_+$ \lj{(or $U:\T^d\to \R_+$, where $\T^d$ is the $d$-dimensional torus)} is a potential function, and $|y|^2/2$ represents the kinetic energy. When the dimension $d$ is large, obtaining the exact evolution of the system becomes infeasible, and a statistical description must be employed. In statistical physics, for systems with fixed temperature $T=\beta^{-1}>0$ and size, but that can exchange energy, the system is characterized by a probability measure known as the Gibbs measure. The probability for the system to be in a state $(x,y)\in\R^{2d}$ solely depends on its energy and is given by 
\begin{equation}\label{def:gibbs}
    \mu(\dd x,\dd y) = \frac{e^{-\beta H(x,y)}}{Z} \dd x\dd y,
\end{equation}
where $Z$ is a normalization constant. The goal of molecular dynamic is to compute macroscopic quantities of the system defined by averages of the form  
\begin{equation}\label{eq:average}
    \mu(f) := \mathbb E(f(X,Y)),\quad (X,Y)\sim \mu,
\end{equation}
for some function $f:\R^d\to\R$. One important example in chemistry is a system of $N\gg 1$ particles with van der Waals interaction, which corresponds to a noble gas. In this case, $d=3N$, and $x=(x_1,\dots,x_N)$ represents the positions of the particles. The potential can be written as
\[
U(x) = \sum_{i=1}^N U_c(x_i) +  \sum_{1\leqslant i\neq j\leqslant N} U_i(|x_i-x_j|),
\]
where $U_c:\R^{3N}\to\R_+$ is a confining potential and for $r\in\R^3$,
\[
U_i(r) = 4\po \frac{1}{|r|^{12}} - \frac{1}{|r|^{6}} \pf.
\]
The interaction part $U_i$ is commonly known as the Lennard-Jones potential. In the case where the gas is confined in a volume $D$, one could then get its pressure by computing the average
\[
P=\mu_{\beta}(f_P),\qquad f_P(x,y) = \frac{1}{3|D|}\sum_{i=1}^N \left( \frac{y_i^2}{m} - x_i\cdot \na_{x_i} U(x) \right),
\]
where $m> 0$ is the mass of a particle.
What makes this example interesting is the presence of singularities in the potential, as it can take infinite value, and the fact that the function $f_P$ is not locally integrable. \lj{For a more in-depth introduction to molecular dynamics, refer to~\cite{tuckerman2023statistical}, as well as~\cite{dynamol-tony-gab} for a mathematical perspective.}

Many algorithms have been developed to compute the average~\eqref{eq:average}, and a review of these algorithms can be found in~\cite{simu_gibbs}. Among them is the kinetic Monte-Carlo method, which is based on the so-called kinetic Langevin process given by:
\begin{equation}\label{eq:langevin}
    \left\{ \begin{aligned} &\dd X_t = Y_t\dd t, \\ &\dd Y_t = -\nabla U(X_t)\dd t  - \gamma Y_t \dd t + \sqrt{2\gamma\beta^{-1}}\dd B_t, \end{aligned}\right.
\end{equation}
where $\gamma>0$ is the friction. This process models the motion of a particle in a potential $U$, in contact with a heat bath at a constant temperature $T=\beta^{-1}$. Under mild assumption, it admits $\mu$ as its unique stationary measure. In fact, it was shown in~\cite{ergodicity2} that this process is ergodic for a certain class of singular potential, meaning that:
\begin{equation}\label{eq:ergo}
    \E\po f \po X_t,Y_t\pf\pf \underset{t\rightarrow\infty}{\rightarrow}\mu(f),
\end{equation}
for $f$ in a suitable class of functions. Consequently, if we could simulate a large number of independent copies $(X^k_t,Y^k_t)_{1\leqslant k\leqslant n}$ of $(X_t,Y_t)$, then
\begin{equation}\label{eq:approx}
\frac{1}{n}\sum_{k=1}^n f(X^k_t,Y^k_t) \approx \mu(f),
\end{equation}
for a sufficiently large $n$ and $t$. Knowing the speed of convergence in~\eqref{eq:ergo} and in the law of large numbers, we are left with the study of the weak convergence of numerical schemes for this Langevin process to get the error made by~\eqref{eq:approx}. For non-singular potentials, there are numerous numerical schemes available, see for example~\cite{splitting_scheme} for a review on splitting schemes. The goal here is to adapt such a scheme and prove a series expansion in the weak convergence for singular potential, focusing on the following one:
\begin{equation}\label{def:schema_stoped}
    \left\{ \begin{aligned} &\bar X_{n+1} = \bar X_n + \mathbbm{1}_{E_{\delta}(\bar X_n,\bar Y_n,G_n)\in\Hd}\delta \bar Y_{n+1}, \\ &\bar Y_{n+1} = \bar Y_n +\mathbbm{1}_{E_{\delta}(\bar X_n,\bar Y_n,G_n)\in\Hd}\left(-\delta\nabla U(\bar X_n)  - \delta\gamma \bar Y_{n}  + \sqrt{2\gamma\beta^{-1}\delta}G_n\right) \end{aligned} \right.
\end{equation}
for some time step $\delta>0$, where $(G_n)$ is a family of independent standard Gaussian random variable, and where for a small fixed parameter $\lj{\ell}>0$,\lj{
\begin{equation}\label{eq:domaine_scheme}
\Hd = \left\{ \phi\leqslant \delta^{-\ell} \right\},
\end{equation}
with
\begin{equation}\label{eq:phi}
\phi(x,y) = H(x,y)+ 4d\gamma\beta^{-1} \frac{y\cdot\na U(x)}{1+|\na U(x)|^2},
\end{equation}
and}
\[
E_{\delta}(x,y,g) = \po x + \delta \po y - \delta\na U(x) -\delta\gamma y + \sqrt{2\gamma\beta^{-1}\delta}g \pf, y - \delta\na U(x) -\delta\gamma y + \sqrt{2\gamma\beta^{-1}\delta}g \pf.
\]
\lj{The definition of $\phi$ stems from the fact that, under our assumptions, for all $b<\beta$, $e^{b\phi}$ is a Lyapunov function for the Langevin process~\eqref{eq:langevin} (see Proposition~\ref{prop:Lyapunov} below), as well as for this numerical scheme (as proved in Lemma~\ref{lem:Lyapunov}).}
Hence, this numerical scheme is a version with rejection of the following (first-order) splitting scheme
\begin{equation}\label{def:schema_Euler}
    \left\{ \begin{aligned} &\tilde X_{n+1} = \tilde X_n + \delta \tilde Y_{n+1}, \\ &\tilde Y_{n+1} = \tilde Y_n -\delta\nabla U(\tilde X_n)  - \delta\gamma \tilde Y_{n}  + \sqrt{2\gamma\beta^{-1}\delta}G_n, \end{aligned}\right.
\end{equation}
but we only accept a step of the scheme if its \lj{Lyapunov function} doesn't exceed some threshold depending on the time step. We could have considered a simple Euler-Maruyama scheme, but if the initial condition satisfies $U(X_0 + \gamma Y_0)=\infty$, then the chain would not be well defined. The splitting scheme~\eqref{def:schema_Euler} is the simplest numerical scheme that is well-defined with probability $1$ for all initial conditions, and under a one-sided Lipschitz condition on $U$, it can be shown that this Markov chain is ergodic. However, there is another problem, which is that even if the numerical scheme is almost-surely well defined, $\tilde X_n$ has a density bounded below by the Lebesgue measure on all compact sets. Hence, if $f$ is not locally integrable, then :
\[
\E(f(\tilde X_n)) = \infty,\quad \forall n\geqslant 1.
\]
This is also the case for all splitting schemes, and it poses two problems. Firstly, it restricts us to consider averages only against bounded functions, which excludes the computation of important quantities such as the mean energy for the Lennard-Jones potential or the pressure of noble gases. Secondly, it prevents the construction of a Lyapunov function, which is a critical step in the proof of weak convergence for numerical schemes. To address these issues, we introduced the numerical Scheme~\eqref{def:schema_stoped}, which has better integrability properties, see Section~\ref{sec:scheme}.
Any scheme used for bounded smooth potential (e.g. splitting schemes) can be adapted in a similar manner (namely with rejection) for singular potentials. We study this specific first-order splitting scheme for simplicity. In the context of singular potentials in high-dimensional spaces, an implicit scheme may be ill-defined, and in any case would have a prohibitive numerical cost. Hence, we consider only explicit schemes. \lj{Another possible numerical scheme would have been to use rejection at a threshold depending only on the energy, i.e. replacing $\left\{ \phi\leqslant \delta^{-\ell} \right\}$ by $\left\{ H\leqslant \delta^{-\ell} \right\}$. However, since the Hamiltonian has not the same level line as any Lyapunov function of the Langevin process (such as $e^{b\phi}$, $b<\beta$), it is not clear that this numerical scheme admits a Lyapunov function, and hence satisfies the integrability property needed for the proofs. Notice in particular that $\na U$ is already computed for the drift, hence no supplementary computations are needed to check if $\phi$ is low enough.}
While in practice, averages are often computed using schemes without rejection, there are cases where rejection could lead to better results. The discussion in Section~\ref{s-sec:numerics} likely elaborates further on this aspect.

There are two approaches for showing the convergence of a numerical scheme towards the continuous process. The first one is strong convergence, which focuses on the convergence of trajectories. However, since our goal is the computation of averages, we are interested in weak convergence. For $f:\R^d\to\R$ in a suitable class of functions and $t\geqslant 0$, we aim to show that:
\[
\lim_{\delta\rightarrow0}\E(f(\bar X_{n})) = \E(f(X_t)),
\]
where $n=\lfloor t/\delta \rfloor$. Our objective is to get a series expansion of this convergence, commonly known as expansion \textit{\`a la Talay-Tubaro}, from the seminal work~\cite{Talay-Tubaro}. Write $(P_t)$ for the semi-group of the Langevin process~\eqref{eq:langevin} defined for regular \lj{and integrable} enough function by:
\[
P_tf(x,y) = \E_{(x,y)}(f(X_t,Y_t)). 
\]
Methods to show weak convergence and series expansion of such convergence rely on estimates on $P_tf$ and its derivatives, typically of the form: for $f:\R^d\to\R$, which grows at most polynomially at infinity (as well as its derivative of all orders), for all multi-index $\alpha\in\N^{2d}$, there exists $C,k>0$ such that:
\[
|\partial^{\alpha}P_tf(x,y)|\leqslant C(1+|x|^k+|y|^k).
\]
Such estimates cannot hold in the singular setting. Therefore, the primary objective of this work is to establish equivalent estimates in the singular setting, which will be addressed in Section~\ref{sec:hypocoercivity}. Those estimates will then enable us to prove in Section~\ref{sec:scheme} an expansion \textit{\`a la Talay-Tubaro} for the numerical scheme~\eqref{def:schema_stoped}, as well as for its invariant measure. In Section~\ref{sec:settings}, we will present the assumptions, theorems, and comparisons with existing works.

\section{Mathematical setting and results}\label{sec:settings}

\subsection{Assumptions and mains results}\label{s-sec:main_results}

Fix a potential $U:\R^d\to[0,\infty]$ and write
\begin{equation}\label{eq:domain}
    \D = \left\{x\in\R^d,\quad U(x)<\infty \right\}, \quad \X = \D\times \R^d, 
\end{equation}
for the domain of definition of the process $(X,Y)$ solution to equation~\eqref{eq:langevin}. In all of this work, we will write $z=(x,y)\in\X$ for the global variable, and $|\cdot|$ for the $l^2$-norm on $\R^d$. For any Banach space $\mathcal B$, let $\mathcal C^{\infty}(\mathcal B,\R)$ denote the set of smooth functions from $\mathcal B$ to $\R$, $\mathcal C_b^{\infty}(\mathcal B,\R)$ the set of bounded smooth functions, and $\mathcal C_c^{\infty}(\mathcal B,\R)$ the set of smooth functions with compact support. Let $\mathcal M^1(\R^{2d})$ denote the set of probability measures on $\R^{2d}$.
We work under several sets of assumptions. First:
\begin{assu}\label{assu:basic}
\begin{itemize}
    \item $U\in\C^{\infty}(\D,\R_+)$.
    \item $\D$ is connected and the set:
    \[\D_n = \left\{x\in\R^d,\quad U(x)<n \right\}\]is bounded for all $n\in\N$.
    \item $U$ defines a Gibbs measure at temperature $\beta$:\[\int_{\D}e^{-\beta U(x)}\dd x<\infty.\]
\end{itemize}
\end{assu}

\begin{assu}\label{assu:growth}
\begin{itemize}
    \item The following limit holds:
    \[
    \lim_{x\rightarrow \mathcal D^c} \frac{|\na^2 U(x)|}{|\na U(x)|^2}=0.
    \]
    \item There exists $c_0,c_{\infty},d_0,d_{\infty}>0$, $\eta_0\in \R\setminus [-1,0]$, $\eta_{\infty}>1$ such that:
    \[
    c_{\infty}U^{2-\frac{2}{\eta_{\infty}}}+d_{\infty}\leqslant |\na U|^2\leqslant c_0U^{2+\frac{2}{\eta_0}}+d_0.
    \]
\end{itemize}
\end{assu}

\begin{assu}\label{assu:Hr-setting}
For all $\alpha\in\N^{d}$, there exist $C_{\alpha}>0$, $k_{\alpha}\in\N$ such that
\[ 
|\partial^{\alpha}U| \leqslant U^{k_{\alpha}} + C_{\alpha}.  
\]
\end{assu}

Let $\mathcal P(U)$ denote the set of admissible functions for our theorems:
\[
\mathcal P(U) = \left\{ f\in\mathcal C^{\infty}(\mathcal X,\R)|\forall \alpha\in\N^{2d},\,\exists C,c>0,\, c<\beta,\, |\partial^{\alpha} f|\leqslant Ce^{cH}\right\}.
\]

The first theorem gives the estimates on the semi-group necessary for the proof of weak convergence.

\begin{thm}\label{thm:estimates}
Suppose Assumptions~\ref{assu:basic},~\ref{assu:growth} and~\ref{assu:Hr-setting}.
Then for all $f\in\mathcal P(U)$, and all multi-index $\alpha\in\N^{2d}$, there exist $C,q>0$, $b\in (\beta(1-\frac{1}{2d}),\beta)$, such that for all $t>0$, $z\in\mathcal X$
\[
    |\partial^{\alpha}(P_tf-\mu(f))(z)| \leqslant Ce^{-qt}e^{bH(z)}.
\]
\end{thm}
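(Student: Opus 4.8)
\emph{Strategy.} I would split the statement into the scalar case $\alpha=0$ and the propagation to general $\alpha$. For $\alpha=0$ the bound $|P_tf(z)-\mu(f)|\le Ce^{-qt}e^{bH(z)}$ is a weighted exponential ergodicity estimate in the supremum norm, which I would derive from a Harris-type theorem: it suffices to produce (a) a Lyapunov function $V_b\asymp e^{bH}$ satisfying a drift inequality $LV_b\le -\lambda V_b+C\mathbf 1_K$ on a compact $K\subset\X$ for a suitable range of exponents $b<\beta$, together with (b) a local Doeblin minorisation on compact subsets of $\X$. Ingredient (b) follows from the hypoellipticity of the generator $L$ — H\"ormander's bracket condition holds because the fields $\partial_{y_i}$ together with the brackets $[\partial_{y_i},L]=\partial_{x_i}-\gamma\partial_{y_i}$ span — combined with controllability of the drift field inside $\D$, and is essentially available from~\cite{ergodicity2}; the substantial work is ingredient (a).

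\emph{Lyapunov function.} For (a) I would use a modified Hamiltonian of the form $\tilde H=H+\varepsilon\,\theta(H)\langle\na U(x),y\rangle$, with $\varepsilon$ small and $\theta$ a bounded cutoff taming the perturbation near $\D^c$, and compute $Le^{b\tilde H}$. The transport part $y\cdot\na_x$ acting on the perturbation yields, to leading order, $-b\varepsilon|\na U(x)|^2e^{b\tilde H}$, which supplies the dissipation missing in the position variable; the two clauses of Assumption~\ref{assu:growth} — the lower bound $c_\infty U^{2-2/\eta_\infty}+d_\infty\le|\na U|^2$ and the vanishing of $|\na^2U|/|\na U|^2$ at $\D^c$ — are exactly what forces this term to dominate the remaining drift and the second-order contribution $\gamma\beta^{-1}\Delta_ye^{b\tilde H}$ all the way up to the singularity. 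Integrability $\mu(V_b)<\infty$ forces $b<\beta$, and the quantitative form of the drift computation confines the usable exponents to the window $(\beta(1-\tfrac1{2d}),\beta)$ of the statement, which is also the range required by the weak-error analysis of Section~\ref{sec:scheme}.

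\emph{Passing to derivatives.} For general $\alpha$ I would induct on $|\alpha|$. Writing $v=P_tf-\mu(f)$ and differentiating the Kolmogorov equation gives $\partial_t\partial^\alpha v=L\partial^\alpha v+[\partial^\alpha,L]v$, where the commutator is a finite sum of terms $(\partial^\gamma\na U)\,\partial^{\alpha'}v$ and $\partial^{\alpha''}v$ with $\gamma\ne0$, $|\alpha'|,|\alpha''|\le|\alpha|$; by Assumption~\ref{assu:Hr-setting} the coefficients $\partial^\gamma\na U$ are bounded by $U^{k}+C$, hence by $C_\sigma e^{\sigma H}$ for every $\sigma>0$. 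The genuine obstruction is that $[\partial_{y_i},L]$ contains $\partial_{x_i}$ of the same order, so the induction cannot be run one derivative at a time; instead I would run it on a twisted/distorted $\Gamma$-calculus functional of Villani--Bakry--\'Emery type, i.e. for each order $N$ a quadratic form $\Gamma_N(g)=\sum_{|\alpha|\le N}a_\alpha e^{-2b_{|\alpha|}H}|\partial^\alpha g|^2+(\text{cross terms pairing }\partial_{x_i},\partial_{y_i}\text{ of equal order})$ whose distorted $\Gamma_2$ is coercive modulo a compactly supported defect, the $a_\alpha$ being small enough to absorb the mixed terms and the weights $b_{|\alpha|}$ forming a strictly increasing, uniformly $<\beta$ sequence (each step costing an arbitrarily small $\sigma$, with only finitely many steps). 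The resulting inequality $\Gamma_N(P_tv)\le e^{-\lambda t}P_t(\Gamma_N v)+(\text{compactly supported correction})$, combined with the scalar estimate, the a priori bound $P_te^{b'H}\le Ce^{b'H}$, and the fact that $\Gamma_N(v)$ at $t=0$ is controlled by $e^{2b_NH}$ since $f\in\mathcal P(U)$, then yields $\Gamma_N(P_tv)(z)\le Ce^{-2qt}e^{2b_NH(z)}$, which is the theorem with $b=b_N$. A probabilistic alternative would be to represent $\partial^\alpha P_tf$ through the variation process of~\eqref{eq:langevin} and to bound its weighted moments using Assumption~\ref{assu:growth}.

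\emph{Main difficulty.} The crux is carrying all of this out \emph{uniformly up to $\D^c$}: there the coefficients of $L$ blow up and there is no classical spectral gap, so every drift, commutator and cross term must be dominated by the growth of $|\na U|$ while every auxiliary weight $e^{b_{|\alpha|}H}$ is simultaneously kept below the integrability barrier $e^{\beta H}$. Tuning the coefficients $a_\alpha$ and the exponents $b_{|\alpha|}$ so that the induction closes without $b_{|\alpha|}$ reaching $\beta$ is the delicate bookkeeping that Assumption~\ref{assu:growth} is designed to make possible, and I expect it to occupy most of Section~\ref{sec:hypocoercivity}.
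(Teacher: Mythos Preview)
Your ingredients---a Lyapunov function $V_b\asymp e^{bH}$ built from a perturbed Hamiltonian, and a hypocoercive $\Gamma$-calculus with cross terms pairing $\nabla_x$- and $\nabla_y$-derivatives, carrying weights that depend on the order of differentiation---are precisely those the paper uses. The route, however, is different.

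The paper does \emph{not} treat $\alpha=0$ separately via a Harris argument and then propagate pointwise. Instead it builds, for each $k$, a single modified Sobolev norm $\|\cdot\|_{mH^{k,k}}$ on $L^2(\mu)$ (equation~\eqref{def:modified-Hr}) in which the term of derivative order $p$ is weighted by a Lyapunov factor $W_{k-p}$; it proves exponential decay of $\|P_tf-\mu(f)\|_{mH^{k,k}}$ by an \emph{integrated} Gamma computation (Lemmas~\ref{lem:H1-hypo} and~\ref{lem:Hr-hypo}, followed by Lumer--Phillips, Theorem~\ref{thm:Hr-hypo}); and only at the very end does it pass to pointwise bounds, by applying the Sobolev embedding $H^k\hookrightarrow L^\infty$ to $P_tf\cdot e^{-bH}$. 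The lower bound $b>\beta(1-\tfrac1{2d})$ is not a feature of the drift computation itself but an artefact of this last step: the Lyapunov exponent $\bar b$ entering the norm is constrained to $(0,\beta/(2d))$ by the construction of~\cite{BaudoinGordinaHerzog}, and after multiplying by $e^{-\beta H}$ from $\mu$ and undoing the embedding one is left with $b\approx\beta-\bar b$.

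Your pointwise scheme has a gap at the ``compactly supported correction''. A pointwise inequality $\Gamma_{2,N}\ge c\,\Gamma_N - C\mathbf 1_K$ integrates along the semigroup to $\Gamma_N(P_tv)\le e^{-ct}P_t\Gamma_N(v)+C'$, and the additive constant does not decay; coupling it to the scalar Harris bound does not obviously help, because the defect lives on derivatives of $P_tv$ over $K$, not on its values. In the paper's $L^2(\mu)$ framework this is exactly where the local Poincar\'e inequality on the compact set (Proposition~\ref{prop:Poincare}) enters: it converts the remainder $\sigma\int_J h^2\,d\mu$ coming from the Lyapunov drift into gradient terms, which are then reabsorbed by the dissipative part of the Gamma computation. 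There is no pointwise substitute for this mechanism, which is why the paper stays in the integrated setting until the final Sobolev embedding. Your outline would need an additional ingredient---e.g.\ a quantitative hypoelliptic smoothing estimate on compacts bounding $|\partial^\alpha P_tv|$ there by a decaying $L^2$ quantity---to close.
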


This theorem allows for the proof of the two following theorems:

\begin{thm}\label{thm:num_approx_finite_time}
Suppose Assumption~\ref{assu:basic},~\ref{assu:growth} and~\ref{assu:Hr-setting}. Then there exists $l_0>0$ such that for all $0<\ell<\ell_0$, $f\in\mathcal P(U)$, and $t\geqslant 0$, there exists a family $(C_i(t))_i$ of explicit real numbers such that:
\[
\E_{z}(f(\bar Z_{n})) = \E_{z}(f(Z_t)) + C_1(t)\delta + \cdots + C_{k}(t)\delta^{k} + O(\delta^{k+1}),
\]
where $(\bar Z_n)_n=(\bar X_n,\bar Y_n)_n$ is the numerical scheme defined in~\eqref{def:schema_stoped}, $n\delta=t$, and $O(\delta^{k+1})$ is uniform in $t$.
\end{thm}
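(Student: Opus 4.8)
The plan is to follow the classical Talay--Tubaro strategy, adapted to the stopped scheme and the singular potential, using Theorem~\ref{thm:estimates} as the crucial input that replaces the polynomial-growth semi-group estimates available in the smooth case. Write $u(t,z) = \E_z(f(Z_t)) = P_tf(z)$ and let $\bar P_\delta$ denote the one-step transition operator of the scheme~\eqref{def:schema_stoped}, so that $\E_z(f(\bar Z_n)) = \bar P_\delta^n f(z)$. The standard telescoping identity gives
\[
\bar P_\delta^n f(z) - P_{n\delta}f(z) = \sum_{j=0}^{n-1} \bar P_\delta^{\,j}\bigl( \bar P_\delta - P_\delta \bigr) P_{(n-1-j)\delta} f(z).
\]
First I would establish the \emph{local (one-step) weak error expansion}: for $g$ smooth with controlled exponential growth, $(\bar P_\delta - P_\delta)g(z) = \sum_{i=2}^{m+1} \delta^i \,\mathcal{A}_i g(z) + O(\delta^{m+2})$, where the $\mathcal{A}_i$ are explicit differential operators built from the generator $\mathcal{L}$ of the Langevin process and from the moments of the Gaussian increment. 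This is a Taylor expansion in $\delta$ of both $P_\delta g$ (via Itô/Dynkin and iteration of $\mathcal{L}$) and of $\bar P_\delta g$ (via Taylor expansion of $g$ around $z$ along the explicit scheme step), the point being that the scheme is consistent of order one so the $\delta^1$ terms cancel. The subtlety here is the indicator $\mathbbm{1}_{E_\delta(\cdot)\in\Hd}$: on the event $\{H > \delta^{-l}\}$ the step is frozen, but by Theorem~\ref{thm:estimates} (or rather the Lyapunov/integrability estimates of Section~\ref{sec:scheme}, which I may invoke) the probability of this event and the contribution of $g$ there is super-polynomially small in $\delta$ provided $l < l_0$, so it does not affect the expansion to any finite order; this is exactly where the constraint $0 < l < l_0$ enters.

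Next I would plug the local expansion into the telescoping sum with $g = P_{(n-1-j)\delta}f = u(t - (j+1)\delta, \cdot)$. For each $i$, the term $\delta^i \sum_{j} \bar P_\delta^{\,j} \mathcal{A}_i u(t-(j+1)\delta,\cdot)(z)$ is a Riemann-type sum that, as $\delta \to 0$ with $n\delta = t$ fixed, converges to $\delta^{i-1} \int_0^t P_s^{\mathrm{approx}} \mathcal{A}_i P_{t-s} f \, ds$ plus lower-order corrections; iterating this procedure (expanding $\bar P_\delta^{\,j}$ itself against $P_{j\delta}$) yields, after reorganizing by powers of $\delta$, the claimed expansion $\E_z(f(\bar Z_n)) = \E_z(f(Z_t)) + \sum_{i=1}^k C_i(t)\delta^i + O(\delta^{k+1})$, with each $C_i(t)$ given by an explicit finite sum of iterated time-integrals of the form $\int \cdots \int P_{s_1}\mathcal{A}_{j_1} P_{s_2} \cdots \mathcal{A}_{j_r} P_{t-\sum s} f$. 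The bookkeeping is routine once one has a uniform control on all the quantities appearing.

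The main obstacle — and the reason Theorem~\ref{thm:estimates} is needed — is \textbf{uniform control of the error terms in the singular setting and uniformity in $t$}. Concretely, every term in the telescoping sum involves derivatives $\partial^\alpha P_{t-(j+1)\delta} f$ of the semi-group (up to order $2(m+2)$ or so), and $\bar P_\delta^{\,j}$ applied to such quantities; the naive bounds blow up near $\partial\D$ and accumulate over the $n = t/\delta$ summands. Theorem~\ref{thm:estimates} supplies exactly the bound $|\partial^\alpha (P_s f - \mu(f))(z)| \leq C e^{-qs} e^{bH(z)}$ with $b < \beta$, so that (i) the exponential-in-$H$ weight $e^{bH}$ stays inside the class $\mathcal{P}(U)$ and is integrable against the Gibbs measure and against the scheme's invariant-type moment bounds, and (ii) the exponential decay $e^{-qs}$ makes the Riemann sums converge and, more importantly, makes the constants $C_i(t)$ bounded uniformly in $t$ and the remainder $O(\delta^{k+1})$ uniform in $t$. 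I would therefore need a companion lemma (the Lyapunov estimate from Section~\ref{sec:scheme}) giving, for the stopped chain, $\sup_{n} \E_z(e^{b' H(\bar Z_n)}) \leq C e^{b' H(z)}$ for some $b' < \beta$ with $b < b'$; combining this moment bound with Theorem~\ref{thm:estimates} closes all the estimates. Handling the indicator/freezing events cleanly — showing their contribution is $O(\delta^{N})$ for every $N$ as long as $l$ is small — is the one genuinely new technical point compared to the classical smooth-potential proof, and I expect it to be where most of the real work lies.
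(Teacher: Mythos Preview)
Your overall architecture --- telescoping sum, local one-step expansion, Theorem~\ref{thm:estimates} for the semi-group derivatives, and a discrete Lyapunov bound $\sup_n \E_z(e^{b'H(\bar Z_n)})<\infty$ for the scheme --- matches the paper's proof. But you have misidentified where the singular potential actually bites, and this is a genuine gap.

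You write the local expansion as $(\bar P_\delta - P_\delta)g(z) = \sum_{i=2}^{m+1}\delta^i\mathcal A_i g(z) + O(\delta^{m+2})$ and say the only subtlety is the freezing indicator. That is not the hard part. The Taylor expansion of $g(\bar Z_1)$ around $z$ leaves a remainder of the form $\delta^{m+2}\,\partial^\alpha g(\xi)$ with $\xi$ on the \emph{segment} $[z,\bar Z_1]$. Theorem~\ref{thm:estimates} only gives $|\partial^\alpha g(\xi)|\leqslant Ce^{bH(\xi)}$, and nothing so far controls $H(\xi)$: the segment can leave $\Hd$ (it can in principle cross the singularity set), and even if it stays in $\Hd$ the bound $e^{bH(\xi)}\leqslant e^{b\delta^{-l}}$ is catastrophically large. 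So your ``$O(\delta^{m+2})$'' is not justified, and the argument stalls precisely at the step you call routine.

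The paper's fix is a further event decomposition that your proposal does not contain. One does \emph{not} work on $\Hd=\{H\leqslant\delta^{-l}\}$ but on the much smaller set $\{H\leqslant\kappa\ln(1/\delta)\}$ for a large constant $\kappa$, and additionally restricts to $|\bar Z_{n-p}-\bar Z_{n-(p+1)}|\leqslant\delta^{2/3}$ (event $\mathcal A_1$). On this event one invokes a dedicated lemma (Lemma~\ref{lem:control_rest}), which uses Assumption~\ref{assu:growth} to show that for $z_1,z_2\in\{H\leqslant\kappa\ln(1/\delta)\}$ with $|z_1-z_2|\leqslant\delta^{2/3}$ one has $H(\xi)\leqslant H(z_1)+C$ for every $\xi\in[z_1,z_2]$. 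This is what makes the Taylor remainder genuinely of size $\delta^{m+2}e^{bH(\bar Z_{n-p})}$, after which the discrete moment bound closes the estimate. The complementary events (large increment, or one endpoint outside $\{H\leqslant\kappa\ln(1/\delta)\}$) are handled by H\"older's inequality combined with the exponential moment bound and Gaussian tails; choosing $\kappa$ large enough makes their contribution $O(\delta^N)$ for any $N$. The freezing indicator itself is essentially invisible once this finer decomposition is in place. In short: you need the logarithmic level set and the segment lemma, not just the Lyapunov estimate and the stopping rule.
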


\begin{thm}\label{thm:num_approx_stationary}
Suppose Assumption~\ref{assu:basic},~\ref{assu:growth} and~\ref{assu:Hr-setting}. Then there exists $\delta_0,\ell_0>0$ such that for all $0<\delta<\delta_0$, $0<\ell<\ell_0$, the numerical scheme defined in~\eqref{def:schema_stoped} admits an unique invariant measure $\mu_{\delta}\in\mathcal M^1(\R^{2d})$. For all $f\in \mathcal P(U)$, there exists $C,c>0$ such that for all $z\in\Hd$:
\[
\left|\E_z\po f \po \bar Z_n\pf \pf - \mu_{\delta}(f)\right|\leqslant Ce^{-cn}.
\]
Moreover, there exists a family $(\tilde C_i)_i$ of explicit real numbers such that:
\[
\mu_{\delta}(f) = \mu(f) + \tilde C_1\delta + \cdots + \tilde C_{k}\delta^{k} + O(\delta^{k+1}).
\]
\end{thm}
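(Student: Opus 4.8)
\emph{Sketch of proof.}
The statement has two essentially independent parts, and I would treat them in turn. For the existence of $\mu_{\delta}$ and the geometric bound, the plan is a uniform-in-$\delta$ Lyapunov analysis of the Markov kernel $\bar P_{\delta}$ associated with~\eqref{def:schema_stoped}. As in Section~\ref{sec:scheme}, one takes a weight of the form $V=e^{aH}$ with $a<\beta$ (as close to $\beta$ as needed) obeying a geometric drift $\bar P_{\delta}V\leqslant(1-\lambda\delta)V+K\delta\,\mathbbm 1_{C}$ for a fixed compact $C\subset\X$ and $\lambda,K>0$ independent of $\delta$ (for $\delta<\delta_0$, $l<l_0$). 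Combined with a minorization on $C$ — which holds because the accepted step of the scheme carries a Gaussian component, so from any point of $C$ the chain lands in a fixed ball with probability bounded below — Harris' theorem gives a unique invariant probability $\mu_{\delta}$, the uniform moment bound $\mu_{\delta}(V)\leqslant K/\lambda$, and a weighted geometric convergence $\|\bar P_{\delta}^{n}(z,\cdot)-\mu_{\delta}\|_{V}\leqslant CV(z)\rho^{n}$ with $\rho=\rho(\delta)<1$. Since any $f\in\mathcal P(U)$ satisfies $|f|\leqslant Ce^{cH}\leqslant C'V$, this yields $|\E_{z}(f(\bar Z_{n}))-\mu_{\delta}(f)|\leqslant CV(z)\rho^{n}$, and restricting to $z\in\Hd$, where $V(z)\leqslant e^{a\delta^{-l}}$ is bounded, produces the stated $Ce^{-cn}$ (with $C,c$ allowed to depend on $\delta$, which is exactly why the bound is asked only for $z\in\Hd$).

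For the expansion I would run the Talay--Tubaro telescoping argument directly at stationarity. Using $P_{N\delta}=P_{\delta}^{N}$, the identity $P_{\delta}^{N}-\bar P_{\delta}^{N}=\sum_{j=0}^{N-1}\bar P_{\delta}^{\,j}(P_{\delta}-\bar P_{\delta})P_{\delta}^{N-1-j}$, and the invariance $\mu_{\delta}\bar P_{\delta}=\mu_{\delta}$,
\[
\mu_{\delta}(P_{N\delta}f)-\mu_{\delta}(f)=\sum_{m=0}^{N-1}\mu_{\delta}\big((P_{\delta}-\bar P_{\delta})P_{m\delta}f\big).
\]
By Theorem~\ref{thm:estimates} and $\mu_{\delta}(e^{bH})\leqslant\mu_{\delta}(V)<\infty$ one has $\mu_{\delta}(P_{N\delta}f)\to\mu(f)$ as $N\to\infty$, so, using that $P_{\delta}-\bar P_{\delta}$ kills constants,
\[
\mu(f)-\mu_{\delta}(f)=\sum_{m\geqslant 0}\mu_{\delta}\big((P_{\delta}-\bar P_{\delta})(P_{m\delta}f-\mu(f))\big).
\]

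The next step is the one-step (local) error expansion: for $g$ smooth with $\mathcal P(U)$-type growth, an It\^o--Taylor expansion of $P_{\delta}g$ and of the step of the scheme gives $(P_{\delta}-\bar P_{\delta})g=\sum_{i=2}^{r+1}\delta^{i}A_{i}g+\delta^{r+2}R_{r}(g)$, where the rejection event $\{E_{\delta}\notin\Hd\}$ contributes only an $O(\delta^{\infty})$ term — it is super-polynomially rare by the choice $\Hd=\{H\leqslant\delta^{-l}\}$ with $l$ small — and where $A_{i},R_{r}$ are differential operators with coefficients polynomially bounded in $U$ by Assumption~\ref{assu:Hr-setting}. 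Taking $g=P_{m\delta}f-\mu(f)$ and invoking Theorem~\ref{thm:estimates}, which bounds \emph{all} derivatives of $P_{m\delta}f-\mu(f)$ by $Ce^{-qm\delta}e^{b'H}$ with $b'<\beta$, every summand decays geometrically in $m$, so all the $m$-sums converge and
\[
\mu(f)-\mu_{\delta}(f)=\sum_{i=2}^{r+1}\delta^{i}S_{i}^{(\delta)}+O(\delta^{r+1}),\qquad S_{i}^{(\delta)}:=\sum_{m\geqslant 0}\mu_{\delta}\big(A_{i}(P_{m\delta}f-\mu(f))\big),
\]
the remainder being $\leqslant\delta^{r+2}\sum_{m\geqslant 0}Ce^{-qm\delta}=O(\delta^{r+1})$; note that each $m$-sum is of size $\sim(q\delta)^{-1}$, so the $i=2$ term already contributes at order $\delta^{1}$, matching the statement. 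It then remains to turn $\sum_{i}\delta^{i}S_{i}^{(\delta)}$ into a genuine power series. This I would do by a finite induction on the order — the case $r=0$ above gives $\mu_{\delta}(g)=\mu(g)+O(\delta)$ uniformly over bounded subsets of $\mathcal P(U)$, the base step; feeding an order-$m$ expansion of $\mu_{\delta}$ back into $S_{i}^{(\delta)}$ replaces $\mu_{\delta}$ by $\mu$ up to $O(\delta^{m+1})$ (the functions $A_{i}(P_{m\delta}f-\mu(f))$ lie, after division by $e^{-qm\delta}$, in a fixed bounded subset of $\mathcal P(U)$) — together with the Euler--Maclaurin formula applied to the grid-sums $\sum_{m\geqslant 0}\phi(m\delta)$ with $\phi(t)=\mu(A_{i}(P_{t}f-\mu(f)))$, legitimate since $\phi$ and all its derivatives decay exponentially by Theorem~\ref{thm:estimates}. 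Collecting powers of $\delta$ yields the $\tilde C_{i}$, e.g. $\tilde C_{1}=-\int_{0}^{\infty}\mu\big(A_{2}(P_{t}f-\mu(f))\big)\,\dd t$.

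I expect the main obstacle to be twofold. First, the bookkeeping of the three nested limiting procedures — the telescoping sum over $m$, the induction on the expansion order, and the Euler--Maclaurin expansion of the grid-sums — while keeping every error term uniform in $\delta$, which rests on the uniform moment bound $\mu_{\delta}(e^{bH})\leqslant C$ and on the exponential-in-time decay of Theorem~\ref{thm:estimates}. Second, and more specific to the singular setting, the rigorous It\^o--Taylor analysis of the local error $(P_{\delta}-\bar P_{\delta})g$ for the \emph{stopped} scheme: one must control the contribution of the rejection event and the growth of the error operators $A_{i},R_{r}$ in terms of $H$, which is precisely what Assumptions~\ref{assu:growth}--\ref{assu:Hr-setting} and the definition of $\Hd$ are tailored for. (Alternatively, the expansion could be read off from Theorem~\ref{thm:num_approx_finite_time} by letting $t\to\infty$: one checks that each coefficient $C_{i}(t)$ has a limit $\tilde C_{i}$, the exchange of the limits $t\to\infty$ and $\delta\to 0$ being justified by the uniformity in $t$ of the remainder there.)
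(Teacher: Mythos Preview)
Your proposal is correct in outline and ends at the same place as the paper, but the route to the expansion differs. For the first part you and the paper agree: Lyapunov drift (Lemma~\ref{lem:Lyapunov}) plus a Doeblin/minorization (Lemma~\ref{lem:ergodicity}) and Harris' theorem. One point worth tightening: a \emph{single} step of the symplectic scheme does not have a density on $\R^{2d}$ (the noise enters only through $\R^d$), so the paper proves the minorization for the \emph{two}-step kernel via a controllability argument inverting $(G_0,G_1)\mapsto\bar Z_2$; your ``Gaussian component'' sentence should be read this way.

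For the expansion, the paper does not integrate the telescoping identity against $\mu_{\delta}$ directly. Instead it Ces\`aro-averages the finite-time error: with $u(t,z)=P_tf(z)$,
\[
\frac{1}{n}\sum_{k=1}^{n}\Big(\E_z\big(u(0,\bar Z_k)\big)-u(k\delta,z)\Big)
=\frac{1}{n}\sum_{k=1}^{n}\sum_{p=0}^{k-1}\E_z\Big(u(p\delta,\bar Z_{k-p})-u((p+1)\delta,\bar Z_{k-(p+1)})\Big),
\]
lets $n\to\infty$ (ergodicity of the scheme on the left, ergodicity of the continuous process on the right), and plugs in the one-step Taylor expansion already obtained in the proof of Theorem~\ref{thm:num_approx_finite_time} --- crucially, that expansion is carried out not by a naive It\^o--Taylor bound but through the three-event decomposition $\mathcal A_1,\mathcal A_2,\mathcal A_3$ and Lemma~\ref{lem:control_rest}, which is exactly what controls the stopping and the singular growth. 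The passage from $\mu_{\delta}$ to $\mu$ in the coefficients is then done via Lemma~\ref{lem:weak_conv_stationary} (a time-dependent weak-convergence lemma proved by the same machinery) together with a Riemann sum, rather than your induction-plus-Euler--Maclaurin scheme. Your approach is a legitimate and arguably cleaner alternative (working at stationarity from the start avoids the Ces\`aro layer), while the paper's buys a direct reuse of the finite-time proof and isolates the $\mu_{\delta}\to\mu$ replacement in a single lemma; your ``alternative'' of letting $t\to\infty$ in Theorem~\ref{thm:num_approx_finite_time} is closest in spirit to what the paper actually does.
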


Let's comment on those assumptions and theorems. Assumptions~\ref{assu:basic} and~\ref{assu:growth} are derived from~\cite{BaudoinGordinaHerzog}, on which this work is based. As explained in their work, Assumption~\ref{assu:basic} is the minimal requirement to ensure the pathwise well-posedness of the process~\eqref{eq:langevin}, as well as for the Gibbs measure to be well-defined and a stationary measure of the Langevin process. Assumption~\ref{assu:growth} differs slightly from the one in~\cite{BaudoinGordinaHerzog}, as they suppose that there exists $\kappa>0$ such that for all $v\in\R^d$
\begin{equation}\label{eq:assu_not_used}
|\na^2U(x)v|\leqslant \frac{\beta}{16d} |\na U(x)|^2|v| + \kappa |v|,
\end{equation}
which would allow potentials that exhibit logarithmic singularities. Here, we forbid those potentials, following the assumption stated in~\cite{ergodicity2}. This assumption plays a vital role in the construction of a Lyapunov function for both the continuous-time process and the numerical scheme.  We need a Lyapunov function of order $e^{bH}$ for any $b<\beta$, which cannot be achieved under~\eqref{eq:assu_not_used}. In this case, it would be possible that the bounds on the semi-group given by Theorem~\ref{thm:estimates} are not in $L^1(\mathcal Law(\bar Z_n))$. Since the proof of theorem~\ref{thm:estimates} relies on computation in Sobolev spaces, and the use of Sobolev embedding, as in~\cite{Hk-hypo,journelannealing}, we need an additional assumption on the derivatives of the potential of all orders in order to carry out the computation. However, the Villani-type condition presented in~\cite{Hk-hypo} is not satisfied by singular potentials, as in the $H^1$ case from~\cite{BaudoinGordinaHerzog}. Therefore, we impose Assumption~\ref{assu:Hr-setting}. 
\lj{This set of assumptions encompasses any repulsive interaction that exhibits a sufficiently rapid explosion (at least algebraic), such as Lennard-Jones interaction, and Coulomb interaction as soon as the particles are living in $\R^q$ for $q\geqslant 3$, coupled with an additional confinement potential. However, it's important to note that 2-dimensional Coulomb interactions ($U_i(r) \propto \ln(r)$) do not satisfy these assumptions.}

It is well-established in numerical probability that the proof of weak convergence relies on estimates of the kind given by Theorem~\ref{thm:estimates}. The proof of this theorem is based on Gamma calculus, see Section~\ref{s-sec:Gamma_Calculus}. This enables us to make series expansion at any order of the error induced by the numerical scheme at each step, with a remaining term of the form $\delta^k\E\po e^{bH(\bar Z_n)} \pf$. This would be infinite in the case of a numerical scheme without any rejection mechanism like~\eqref{def:schema_Euler}, but we will show that in the case of the numerical scheme~\eqref{def:schema_stoped}, there is, under our assumptions, as in the continuous setting, a Lyapunov function of this order, uniformly over small time step. In particular, we may apply the method developed in~\cite{Talay-Tubaro} to get weak convergence as well as expansion \textit{\`a la Talay-Tubaro} for this numerical scheme. \lj{It can be noticed from the proofs that the constants $(C_k(t))_k$ and $(\tilde C_k(t))_k$ are independent of $\ell>0$, and can be expressed in the same way as if we considered the numerical scheme~\eqref{def:schema_Euler} with a regular potential, as in~\cite{splitting_scheme}. For instance, we have the following formula:
\[
C_1(t) = \int_0^t \E_z\left( \psi(s,Z_{t-s}) \right) \dd s,
\]
where
\begin{multline*}
    \psi(t,z) = \po \na U + \gamma y \pf \cdot \po \na_y/2 - \na_x\pf  P_tf + \frac{1}{2}\partial_t^2 P_tf - \frac{1}{2}y\cdot \na_x\partial_t P_tf + \frac{1}{2}\po \na U + \gamma y \pf \cdot \na_y^2y \po \na U + \gamma y \pf \\- \frac{1}{2}y\cdot \na_{x,y}^2 \po \na U + \gamma y \pf + \gamma\beta^{-1}\sum_{i=1}^d \partial_{x_i}\partial_{y_i}P_tf - \frac{2}{3}\gamma\beta^{-1} \partial_t\Delta_yP_tf \\- \frac{7}{12}\gamma\beta^{-1} \po \na U + \gamma y \pf\cdot \na_y\Delta_yP_tf + \frac{1}{3}\gamma\beta^{-1}y\cdot\na_x\Delta_y P_tf \\ + \frac{1}{2} \po\gamma\beta^{-1}\pf^2 \sum_{i=1}^d \partial^4_{y_i}P_tf + \frac{1}{6}\po \gamma\beta^{-1} \pf^2 \sum_{i\neq j=1}^d \partial^2_{y_i}\partial_{y_j}^2P_tf.
\end{multline*}}

Besides, a corollary of Theorem~\ref{thm:num_approx_finite_time} is the uniform in time weak convergence of the process. For a given $f\in\mathcal P(U)$, and $z\in\R^{2d}$, there exists $C>0$ such that for all $t\geqslant 0$, $n\delta = t$:
\[
\left|\E_{z}(f(\bar Z_{n})) - \E_{z}(f(Z_t))\right| \leqslant C\delta.
\]

Theorems~\ref{thm:num_approx_finite_time} and~\ref{thm:num_approx_stationary} are also motivated by the following fact:
\[
2\E\po f \po Z^{\delta/2}_{2n}\pf \pf - \E\po f \po Z^{\delta}_{n}\pf \pf = \E\po f \po Z_t\pf \pf - \frac{C_2}{2}\delta^2 + O(\delta^3),
\]
which yields a better order convergence. With arbitrary order expansion, it is possible to achieve any order of convergence using a combination of $\po \E\po f \po Z^{\delta/2^k}_{2^kn}\pf \pf \pf$, \lj{albeit at the expense of increased computational complexity. This approach is known as Romberg-Richardson interpolation. While this interpolation method could potentially reduce bias, it might also lead to an increase in the variance of the scheme.} For numerical experiments on this improvement, see the work of Talay and Tubaro~\cite{Talay-Tubaro}.

\subsection{Comparison between numerical scheme with and without rejection}\label{s-sec:numerics}

In practice, numerical studies often utilize splitting schemes with no rejection on the bounded torus. If the potential is defined on $\T^d = \po\R / \mathbb Z \pf^d$, then the Langevin process~\eqref{eq:langevin} would be defined on $\T^d\times\R^d$. All proofs would still be valid, preserving the theorems' validity. As discussed in the introduction, for the numerical scheme~\eqref{def:schema_Euler}, it is not possible to take the expectation of unbounded functions, and \lj{an invariant probability measure may not exist}. Additionally, it also remains an open problem as to whether finite time weak expansion would still hold under our assumptions for bounded functions and numerical schemes without rejection. Regarding the use of schemes without rejection in practical applications, it is justified by the fact that the continuous process does not reach the singularities. For any fixed time $T>0$, one can select a time step $\delta>0$ small enough so that the numerical scheme without rejection does not get close to the singularities, and the computation of averages would yield reasonable results. However, for a fixed $\delta$, the scheme would approach arbitrarily close to the singularity in large time, leading to abnormally large steps. Whereas with the rejection mechanism, it is possible to choose larger time steps, resulting in a more computationally efficient process, and avoiding the issue of getting too close to the singularities. This advantage of the rejection mechanism makes it a preferable choice in certain practical scenarios, such as in the following $1$-dimensional toy model:
\[
U(x) = \frac{1}{x} + x^2,
\]
for which we conducted the following numerical experiment: fix $\beta^{-1}=15$, $\gamma=1$, and a final time $T=15000$. Define the empirical averages by
\[
n \mapsto S_n = \frac{1}{n}\sum_{k=0}^{n-1} U(Y_k),
\]
where $Y_k = \bar X_k$ or $Y_k = \tilde X_k$. For different values of $\delta$, we simulate $K=1000$ copies $S^i$ of $S_{\lfloor T/\delta \rfloor}$. We plot the evolution of the proportion of copies that have more than $1\%$ error:
\[
\delta \mapsto \frac{1}{K}\sum_{i=1}^K \mathbbm 1_{S^i\notin [0.99\mu_\beta(U),1.01\mu_\beta(U)]},
\]
where we estimated $\mu_{\beta}(U)$ using \textit{Wolfram Alpha}. We get the result in Figure~\ref{fig:fixed_time}. 

\begin{figure}
\centering
\includegraphics[width=.7\linewidth]{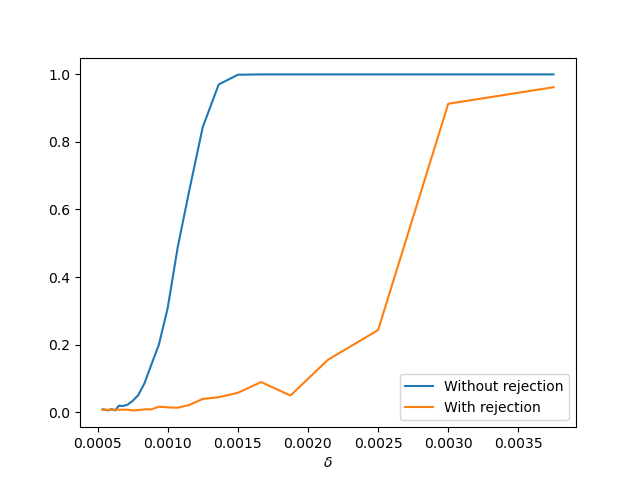}
\caption{Failure probability.}
\label{fig:fixed_time}
\end{figure}

We can observe that for very small $\delta$, the scheme without rejection produces satisfactory results. Nonetheless, there exists an interval of values of $\delta$ where the scheme with rejection significantly outperforms the other. The proportion of failures does not converge to $0$ because of the error stemming from insufficiently long simulation times. For values of $\delta$ that aren't small enough, the energy threshold cannot, to prevent explosion, be set high enough to ensure the convergence of the empirical averages converges to a value close to $\mu_{\beta}(U)$ ($\approx 9.035$). This behavior is illustrated in Figure~\ref{fig:one-traj}, where we display a typical trajectory for both cases, with $\delta=10^{-2}$.

\begin{figure}
\begin{subfigure}{.5\textwidth}
  \centering
  \includegraphics[width=.9\linewidth]{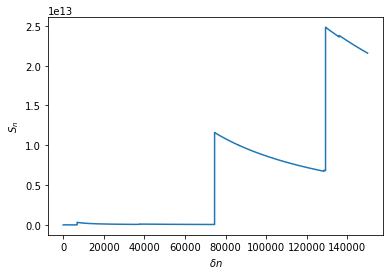}
  \caption{Without rejection.}
  \label{sfig:fixed_time_no_rej_16}
\end{subfigure}%
\begin{subfigure}{.5\textwidth}
  \centering
  \includegraphics[width=.9\linewidth]{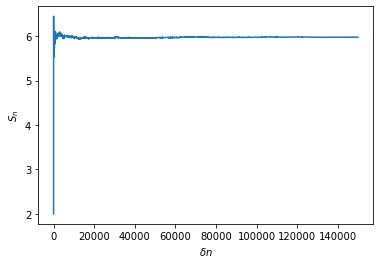}
  \caption{With rejection.}
  \label{sfig:fixed_time_no_rej_14}
\end{subfigure} 
\caption{$n \mapsto S_n$, $\delta=10^{-2}$.}
\label{fig:one-traj}
\end{figure}

For smaller $\beta$, the explosion phenomenon would be even more prominent because the variance of the Gaussian distribution is proportional to $\gamma\delta\beta^{-1}$. As $\beta$ decreases, the variance increases, making the numerical scheme more susceptible to divergent behavior and explosions in the trajectories. Hence, a smaller $\beta$ exacerbates the challenge of handling the singularities and maintaining stable and accurate numerical computations. This unstable behavior could also arise in the case of a metastable process. The metastability imposes long simulation times, which could prove longer than the time needed for the numerical scheme to visit the singularities.
The conclusion of this experiment/discussion is that the use of the rejection mechanism in practical applications may help to avoid divergent behavior and explosions, leading to more stable and accurate results.

\lj{An alternative approach to address the singularities is to employ a Metropolis-adjusted scheme, as the one presented in~\cite{M-H-Langevin}. Similarly to our scheme, Metropolis-adjusted algorithms tend to reject transitions going too close to singularities, but the difference is that they also reject moves in low-energy regions, where the process spends most of its time. For the toy model discussed in this section, such a scheme exhibits remarkably high performance, with a success rate exceeding 99\% for all considered time steps. However, in high-dimensional settings, Metropolis-adjusted schemes often result in numerous rejections and exhibit suboptimal performance. As a result, they are typically not used in the realms of molecular dynamics or statistics, even in the case of regular potentials.}

\subsection{Related works}\label{s-sec:related_work}

The weak error expansion of the numerical scheme, as stated in Theorem~\ref{thm:num_approx_finite_time} and Theorem~\ref{thm:num_approx_stationary}, was initially proven and numerically studied by Talay and Tubaro in~\cite{Talay-Tubaro}. Their work focuses on the Euler-Maruyama and Milstein schemes for Elliptic SDEs with globally Lipschitz coefficients. See also references therein for weak convergence of numerical schemes for such processes. Talay also proved similar expansions for Hamiltonian systems with polynomial growth at infinity in~\cite{Talay}, although using an implicit Euler scheme. Those expansions are also called weak backward analysis in~\cite{WBA1,WBA2,WBA3}. The long-term behavior of numerical schemes has been explored in~\cite{long_time_scheme}, where it is shown that the explicit Euler scheme might not be ergodic for non-globally Lipchitz vector field. See also~\cite{failure_euler} for the study of the failure of the Euler-Maruyama scheme. Additional references can be found in~\cite{scheme1,scheme2,scheme3}. Regarding explicit numerical schemes for the Langevin process with bounded potentials on the torus, see~\cite{splitting_scheme}, as well as~\cite{scheme_general_kinetic_energy} for the Langevin process with more general kinetic energies. In the same spirit as numerical schemes with rejection, the used of stopped scheme to transfer integrability properties of the continuous process to the numerical scheme already appeared in~\cite{exp_int_stopped_scheme}, see also~\cite{scheme_martin}. For uniform in time convergence of a numerical scheme, see~\cite{conv_unif_time_scheme}.
Working with modified Sobolev norm to study the long time behavior of hypocoercive process was first introduced in the seminal work~\cite{vil}. For the study of the singular Langevin process, \cite{ergodicity,ergodicity3} focus on the construction of solution and ergodicity, and \cite{ergodicity2,BaudoinGordinaHerzog,ergodicityL2} on the long time behavior of the process in Wasserstein, $H^1$ and $L^2$ distance, respectively. \cite{Lyapu_coulomb} addresses the case of Coulomb interaction, which corresponds to a potential with logarithmic singularities.
For alternative methods of sampling for the Gibbs measure, see~\cite{simu_gibbs}.

\section{Proof of Theorem~\ref{thm:estimates}}\label{sec:hypocoercivity}

The proof of Theorem~\ref{thm:estimates} relies on Sobolev embedding, and $H^k$-estimates. Similar results have been established for Langevin processes and close to quadratic potentials, in the sense that there exists $0<m\leqslant M$ such that $mI_d \leqslant \na^2 U \leqslant MI_d$, where $I_d$ stands for the identity matrix of size $d$, in~\cite{Talay}. Here we will use a method previously employed in~\cite{Hk-hypo,journelannealing}. For a given $f\in\mathcal P(U)$, our goal is to show bound of the form:
\[
\int_{\X}|\partial^{\alpha}\po P_tf e^{-bH} \pf|^2 \dd z \leqslant Ce^{-q t},
\]
for some $0<b<\beta$ and all $\alpha\in\N^{2d}$. Sobolev embedding would then yield Theorem~\ref{thm:estimates}. To do so, we define a norm $\|\cdot\|_k$ that dominates the $H^k$-Sobolev norm, but such that $t\mapsto \|P_tf - \mu(f)\|_k$ converges exponentially fast to $0$. However, contrary to the cited work, we need to take into account the singular potential by using a Lyapunov function. It was shown in~\cite{BaudoinGordinaHerzog} that the Langevin process~\eqref{eq:langevin} admits a family of Lyapunov functions $(V_{0,\bar b})$ such that for all $0 < \bar b < \beta/(2d)$, $\varepsilon>0$:
\[C^{-1}e^{(1-\varepsilon)\bar b H}<V_{0,\bar b}<Ce^{(1+\varepsilon)\bar b H},\]
see~\cite[Equation 5.2]{BaudoinGordinaHerzog} for its exact definition and Proposition~\ref{prop:Lyapunov} below for its main properties. Thanks to this family of Lyapunov functions, we may now define our modified Sobolev norms. Let $\na^l_x\na_y^ph$ denote the vector of all derivative of $h$ of order $l$ on $x$ and $p$ on $y$:
\[
\na_x^{l}\na_y^{p}h=\left\{\partial^{\alpha_1}_x\partial^{\alpha_2}_yh\;\middle|\; |\alpha_1|=l,\, |\alpha_2|=p\right\},
\]
and its norm
\[
|\na_x^l\na_y^{p}h|^2=\sum_{|\alpha_1|= l;|\alpha_2|= p} |\partial_x^{\alpha_1}\partial_y^{\alpha_2} h|^2.
\]
Fix $0 < \bar b < \beta/(2d)$, and $k\in\N^*$. Write $V=V_{0,\bar b}^{1/k}$, and for $p\in \llbracket 1, k \rrbracket$, $W_p = V^p + \lambda$, for some $\lambda >0$. For $h\in \mathcal C^{\infty}(\X,\R)$, denote:
\begin{align}\label{def:modified-Hr}
    \|h\|_{mH^{r,k},\bar b}^2 &= \int_{\X}  h^2W_k\dd\mu \nonumber\\ &\qquad + \int_{\X} \sum_{p=1}^r \po \sum_{i=0}^{p-1}\omega_{i,p} |\na_x^i\na_y^{p-i} h |^2  + \omega_{p,p} |\po\na_x^p-\xi\na_x^{p-1}\na_y\pf h|^2 \pf \po 1 + \varepsilon_p W_{k-p} \pf \dd \mu,
\end{align}
where $(\omega_{i,p})$ and $(\varepsilon_p)$ are two families of positive parameters to be fixed later, and $\xi = \frac{\gamma}{2} + \sqrt{\frac{\gamma^2}{4}+1}$. \lj{The exact value of $\xi$ does not matter, as the important term is the scalar product $\na_x^p\cdot \na_x^{p-1}\na_y$. However this expression allows for simplified computation, see inequality~\eqref{eq:low_bound_Gamma}}. For the sequel of this work, we will drop the dependence in $\bar b$ in the definition of the norm and simply write $\|\cdot\|_{mH^{r,k}}$. Write:
\[mH^k=\left\{h\in \mathcal L^2(\X,\R),\quad  \|h\|_{mH^{k,k}}<\infty \right\}.\]
The goal is to show that for $f\in\mathcal P(U)$, we may chose $\bar b$ such that $\|P_tf-\mu(f)\|_{mH^{r,k}}$ decreases  exponentially fast along the dynamic.

\begin{thm}\label{thm:Hr-hypo}
Under Assumptions~\ref{assu:basic},~\ref{assu:growth} and~\ref{assu:Hr-setting}, for all $f\in\mathcal P(U)$, $k\in\N$, there exists $0 < \bar b < \beta/(2d)$ such that for all $t\geqslant 0$, $P_tf\in mH^k$, and we have that there is $q_k>0$ such that:
\begin{equation}
    \|P_tf\|_{mH^{k,k}} \leqslant e^{-q_kt}\|f\|_{mH^{k,k}}.
\end{equation}
\end{thm}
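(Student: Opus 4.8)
\emph{Strategy.} Write $u_t:=P_tf$, so that $\partial_t u_t = Lu_t$ with $L = y\cdot\na_x - \na U(x)\cdot\na_y - \gamma y\cdot\na_y + \gamma\beta^{-1}\Delta_y$ the generator of~\eqref{eq:langevin}. Since $f-\mu(f)\in\mathcal P(U)$ and $P_t(f-\mu(f)) = P_tf-\mu(f)$, I may assume $\mu(f)=0$; it then suffices to prove the differential inequality $\frac{\dd}{\dd t}\|u_t\|_{mH^{k,k}}^2 \leqslant -2q_k\|u_t\|_{mH^{k,k}}^2$ for some $q_k>0$ and conclude by Gr\"onwall. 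Before that I would secure the needed a priori regularity: hypoellipticity gives $u_t\in\C^\infty(\X,\R)$ for $t>0$, and propagating the Lyapunov bounds of Proposition~\ref{prop:Lyapunov} along with Assumption~\ref{assu:Hr-setting} (polynomial-in-$U$ control of all derivatives of $U$) yields $u_t\in mH^k$; to differentiate under the integral sign one works first on an exhaustion of $\X$ by relatively compact subsets of $\D$, or with a regularised generator, and passes to the limit using the same Lyapunov estimates. I will describe the formal computation, the justification being routine.

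\emph{Differentiating the norm.} Decompose $L=S+A$ into its $L^2(\mu)$-symmetric and antisymmetric parts, $S=\gamma\beta^{-1}(\Delta_y-\beta y\cdot\na_y)$ and $A=y\cdot\na_x-\na U\cdot\na_y$. For the zeroth order piece one gets $\frac{\dd}{\dd t}\int u_t^2W_k\,\dd\mu = -2\gamma\beta^{-1}\int|\na_y u_t|^2W_k\,\dd\mu + \int u_t^2\,(L^*W_k)\,\dd\mu$, where $L^*=S-A$ is the adjoint (time-reversed Langevin generator); by Proposition~\ref{prop:Lyapunov} applied to $W_k=V_{0,\bar b}+\lambda$ one has $L^*W_k\leqslant -cW_k + C\mathbbm{1}_K$ for some compact $K\subset\X$, which produces $-c\int u_t^2W_k\,\dd\mu$ up to a remainder $C\int_K u_t^2\,\dd\mu$ that a local Poincar\'e inequality (valid since $\mu$ has a smooth positive density on the connected set $\D$) absorbs into $\int|\na u_t|^2\,\dd\mu$. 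For the higher order pieces I commute $L$ through $\na_x^i\na_y^{p-i}$ and through the twisted combination $\na_x^p - \xi\na_x^{p-1}\na_y$: the key commutator $[L,\na_y]$ contains $\na_x$ (modulo $-\gamma\na_y$ and curvature terms), which is exactly the hypocoercive mechanism transferring the $y$-dissipation to the $x$-directions, while $\xi=\frac{\gamma}{2}+\sqrt{\frac{\gamma^2}{4}+1}$, the positive root of $\xi^2-\gamma\xi-1=0$, is chosen precisely so that the cross terms in the evolution of $|\na_x^{p-1}\na_y u_t|^2$ and of $|(\na_x^p-\xi\na_x^{p-1}\na_y)u_t|^2$ combine into a strictly coercive quadratic form in the top-order derivatives. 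The commutators also generate terms carrying $\na^jU$, $j\geqslant 2$; these are handled by Assumption~\ref{assu:Hr-setting} together with the first part of Assumption~\ref{assu:growth} ($|\na^2U|/|\na U|^2\to 0$), after pairing against the weights $\varepsilon_pW_{k-p}$ and reabsorbing $|\na U|^2$-type terms via the Lyapunov drift of the powers $V^p$ provided by Proposition~\ref{prop:Lyapunov}.

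\emph{Choice of parameters and conclusion.} The families $(\omega_{i,p})$, $(\varepsilon_p)$ and the constant $\lambda$ are fixed by a finite induction in lexicographic order on $(p,i)$, from the top order downwards: at each stage the good negative terms at order $p$ dominate the finitely many bad cross terms (from commutators and from differentiating the weights) once the next parameter is chosen small enough relative to those already fixed — the standard Villani-type $H^k$ bookkeeping, here complicated by the weights $W_{k-p}$, which degrade with $p$ (the top order $p=k$ carrying only $W_0=1+\lambda$). Crucially, $\bar b<\beta/(2d)$ must be taken small, depending on $k$ and on the exponent $c$ in the definition of $\mathcal P(U)$, so that the Lyapunov estimates of Proposition~\ref{prop:Lyapunov} hold for every power $V^p$, $p\leqslant k$, so that $\int h^2W_k\,\dd\mu$ controls the relevant $\mu$-averages, and so that $e^{bH}f$ together with its derivatives remains $\mu$-integrable after multiplication by $W_k$. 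With the parameters frozen, the above estimates chain together — the zeroth-order Lyapunov/Poincar\'e bound feeding its remainder into the first-order gradient term, each order's dissipation feeding the next through the hypocoercive commutator — into $\frac{\dd}{\dd t}\|u_t\|_{mH^{k,k}}^2\leqslant -2q_k\|u_t\|_{mH^{k,k}}^2$, and Gr\"onwall gives the claim.

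\emph{Main obstacle.} The technical heart is exactly this matching, near the singular set $\D^c$, between the loss of Lyapunov weight with the differentiation order and the growth of the commutator terms: a $p$-th order derivative of $u_t$ is weighted only by $W_{k-p}$, yet commuting $L$ past it brings in $\na U$ (large near $\D^c$) and higher derivatives $\na^jU$; the scheme closes solely because $|\na^2U|/|\na U|^2\to 0$ permits reabsorbing the $|\na U|^2$ contributions into the Lyapunov drift of $V^p$, and because Assumption~\ref{assu:Hr-setting} keeps all higher derivatives polynomially bounded in $U$. Ensuring that all constants line up simultaneously — in particular that a single $\bar b$ works for every order $p\leqslant k$ and stays compatible with the admissible range of $f\in\mathcal P(U)$ — is the delicate point; the remainder, in principle, is a long but mechanical computation.
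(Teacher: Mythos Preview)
Your outline of the dissipativity computation is essentially correct and matches the content of Lemma~\ref{lem:Hr-hypo}: the commutator structure, the role of $\xi$, the use of Assumption~\ref{assu:Hr-setting} to absorb derivatives of $U$ into the Lyapunov weights, and the inductive choice of the $(\omega_{i,p}),(\varepsilon_p)$ are all as in the paper (though the induction there runs upward in the order $r$, not downward).

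The gap is in the passage from the formal differential inequality to the conclusion. You write that justifying differentiation of $t\mapsto\|P_tf\|_{mH^{k,k}}^2$ under the integral sign is ``routine'' via an exhaustion or regularised generator. In the singular setting this is exactly the step the paper does \emph{not} treat as routine: after Lemma~\ref{lem:H1-hypo} it explicitly flags the computation as formal, and the proof of Theorem~\ref{thm:Hr-hypo} bypasses Gr\"onwall entirely. Instead it works at the operator level: Lemma~\ref{lem:Hr-hypo} plus density gives that $L+q_k/2\,I$ is dissipative on $mH^k$, Lax--Milgram on the bilinear form $(f,g)\mapsto\langle -(L+qI)f,g\rangle_{mH^k}$ yields surjectivity of $L+qI$ for $q<q_k/2$, and then Lumer--Phillips produces the contraction $\|e^{q_kt/2}P_tf\|_{mH^k}\leqslant\|f\|_{mH^k}$ directly. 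This avoids having to control, uniformly in $t$, the boundary terms and tail integrals that an exhaustion argument would generate near the singular set $\D^c$ --- precisely where the weights $W_p$ and the commutator terms blow up. Your approach could likely be made rigorous, but it is not the mechanical step you describe; the paper's functional-analytic route is chosen to sidestep it.
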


This section will be mostly about the demonstration of this theorem, which relies on Gamma calculus. We start with a quick review of Gamma calculus in Section~\ref{s-sec:Gamma_Calculus} for non-singular potential, then we will prove $H^1$ convergence in Section~\ref{s-sec:H1-hypo}, which serves as an initial case for an induction argument which will prove Theorem~\ref{thm:Hr-hypo} in Section~\ref{s-sec:Hr-hypo}. Finally, Theorem~\ref{thm:estimates} will be proven is Section~\ref{s-sec:proof_estimates}.

In the sequel, we denote by $L$ the generator of the process~\eqref{eq:langevin} given by
\begin{equation}\label{eq:generateur}
    L = y\cdot\na_x  - \na U\cdot\na_y  - \gamma y\cdot\na_y  + \gamma \beta^{-1}\Delta_y,
\end{equation}
and by $L^*$ its adjoint in $L^2(\mu)$, given by:
\[
L^* = -y\cdot\na_x + \na U\cdot\na_y  - \gamma y\cdot\na_y  + \gamma \beta^{-1}\Delta_y.
\]

\subsection{Gamma calculus}\label{s-sec:Gamma_Calculus}

For smooth and bounded function $f:\mathcal X\to\R$, H\"ormander's theorem yields that $P_tf$ is smooth  and solves the following Kolmogorov equation:
\begin{equation}
    \left\{\begin{aligned}
    &\partial_t g = Lg,\\ &g(t=0,\cdot)=f,
    \end{aligned} \right.
\end{equation}
where $L$ is the generator of the process defined in~\eqref{eq:generateur}, see \cite[Proposition 2.5]{ergodicity}. We are interested in the evolution of quantities of the form:
\[
\int_{\mathcal X} \phi\left( P_tf\right)(z) \mu(\dd z)
\]
for quadratic functional $\phi:\mathcal C^{\infty}(\mathcal X)\to \mathcal C^{\infty}(\mathcal X)$ of the form:
\[
\phi(h) = |A\na^{\alpha} h|^2 = \sum_{\alpha_1 + \dots + \alpha_{2d} =\alpha} |A_{\alpha_1,\dots,\alpha_{2d}}\partial^{\alpha_1}_{z_1}\dots\partial^{\alpha_{2d}}_{z_{2d}}h|^2,
\]
for some $\alpha \in\N$ and tensor $A$.
To this end, define:
\begin{equation}\label{def:Gamma}
    \Gamma_{L,\phi}(h) = \frac{1}{2}\left( L(\phi(h)) - D_h\phi(h)Lh\right),
\end{equation}
where $D_h\phi$ denote the differential operator of $\phi$. This definition is motivated by the following formal computation:
\begin{equation}\label{eq:derivative}
    \frac{\dd }{\dd t} \int_{\mathcal X} \phi\left( P_tf(z)\right)  \mu(\dd z) = -\int_{\mathcal X} \Gamma_{L,\phi}\left( P_tf(z)\right)\mu(\dd z),
\end{equation}
which uses that $\mu$ is invariant for $P_t$. If we can get an inequality of the form 
\[
c\int_{\mathcal X} \phi(h)(z) \mu(\dd z) \leqslant  \int_{\mathcal X}  \Gamma_{L,\phi}(h)(z) \mu(\dd z)
\]
for all smooth $h$, then this implies the convergence:
\begin{equation}\label{eq:conv_general}
\int_{\mathcal X} \phi\left( P_tf(z)\right) \dd z \leqslant e^{-c t}\int_{\mathcal X} \phi\left( f(z)\right) \dd z.
\end{equation}
\lj{Gamma calculus} is a tool particularly well adapted for the study of quadratic functional, because of the following identity:

\begin{prop}\label{prop:calcul_Gamma}
If there exists $A=(A_1,\cdots,A_p):\mathcal C^{\infty}\to (\mathcal C^{\infty})^p$ a linear operator such that $\phi(h) = |Ah|^2$, then
\[
\Gamma_{L,\phi}(h) = \Gamma_{L,2}(Ah) + Ah\cdot [L,A]h,
\]
where $\Gamma_{L,2}(Ah)=\sum_{i=1}^p\Gamma(A_ih)$, $\Gamma(h)=\Gamma_{L,h^2}(h)$, and $[L,A]=\left([L,A_1],\cdots,[L,A_p] \right)$.
\end{prop}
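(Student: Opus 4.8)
The plan is to prove the identity $\Gamma_{L,\phi}(h) = \Gamma_{L,2}(Ah) + Ah\cdot[L,A]h$ by unwinding the definition~\eqref{def:Gamma} and exploiting the chain/product rules for the generator $L$, which is a second-order differential operator of the form $L = b\cdot\na + \gamma\beta^{-1}\Delta_y$ (a drift plus a diffusive part acting only on the $y$-variables). Writing $\phi(h) = |Ah|^2 = \sum_{i=1}^p (A_ih)^2$, the first step is to compute $L(\phi(h))$ using the carr\'e-du-champ decomposition for a diffusion generator: for smooth $g_1,\dots,g_p$, one has $L\big(\sum_i g_i^2\big) = \sum_i\big(2 g_i L g_i + 2\Gamma(g_i)\big)$, where $\Gamma(g) = \gamma\beta^{-1}|\na_y g|^2$ is the usual carr\'e du champ associated with $L$. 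Applying this with $g_i = A_i h$ gives $L(\phi(h)) = 2\sum_i (A_ih)L(A_ih) + 2\sum_i\Gamma(A_ih) = 2\,Ah\cdot L(Ah) + 2\,\Gamma_{L,2}(Ah)$.

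The second step is to handle the term $D_h\phi(h)\,Lh$. Since $\phi(h)=|Ah|^2$ and $A$ is linear, the Fr\'echet derivative is $D_h\phi(h)\,v = 2\,Ah\cdot Av$, so $D_h\phi(h)\,Lh = 2\,Ah\cdot A(Lh)$. Plugging both computations into~\eqref{def:Gamma}:
\[
\Gamma_{L,\phi}(h) = \frac{1}{2}\Big(2\,Ah\cdot L(Ah) + 2\,\Gamma_{L,2}(Ah) - 2\,Ah\cdot A(Lh)\Big) = \Gamma_{L,2}(Ah) + Ah\cdot\big(L(Ah) - A(Lh)\big).
\]
Recognizing $L(Ah) - A(Lh) = [L,A]h$ (componentwise, $[L,A_i]h = L(A_ih) - A_i(Lh)$) yields the claimed identity. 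The only genuine point requiring care is the carr\'e-du-champ identity $L(g^2) = 2gLg + 2\Gamma(g)$: this holds because the first-order part of $L$ satisfies the Leibniz rule (so it contributes $2g\,(b\cdot\na g)$ with no extra term) while the second-order part $\gamma\beta^{-1}\Delta_y$ contributes the Hessian cross-terms, giving $\gamma\beta^{-1}\Delta_y(g^2) = 2g\,\gamma\beta^{-1}\Delta_y g + 2\gamma\beta^{-1}|\na_y g|^2$; summing gives $Lg^2 = 2gLg + 2\gamma\beta^{-1}|\na_y g|^2$, which defines $\Gamma(g)$.

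I expect essentially no obstacle here: the statement is an algebraic identity and the proof is a direct computation, with the main (minor) subtlety being simply to keep track of the fact that $\Gamma$ picks up only the diffusive ($\Delta_y$) contribution and that $A$ is linear so that it commutes with the differentiation in the definition of the Fr\'echet derivative. One should note that the operators $A_i$ in the applications are differential operators with smooth (possibly unbounded) coefficients, so all manipulations are justified pointwise for $h\in\mathcal C^{\infty}(\mathcal X,\R)$ without integrability concerns, the integration against $\mu$ only entering later when~\eqref{def:Gamma} is used inside an integral.
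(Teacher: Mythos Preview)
Your proof is correct. The paper itself does not supply a proof of this proposition (it is stated as a standard identity of Gamma calculus and used directly), and your computation is precisely the natural one: apply the carr\'e-du-champ identity $L(g^2)=2gLg+2\Gamma(g)$ componentwise to $g_i=A_ih$, compute the Fr\'echet derivative $D_h\phi(h)v=2Ah\cdot Av$ using linearity of $A$, and recognize the commutator.
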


In the non-singular case, we can use this to show $H^1(\mu)$ (and even $H^k(\mu)$) convergence of $P_t(f)$ towards $\mu(f)$: If there exists $0<m\leqslant M$ such that $mI_d \leqslant \na^2 U \leqslant MI_d$, where $I_d$ stands for the identity matrix of size $d$, then we can show that
\[
\phi(h) = ch^2 + |(\na_x + \na_y)h|^2 \lj{+ |\na_yh|^2},
\]
for some $c>0$, satisfies inequality~\eqref{eq:conv_general} thanks to a Poincar\'e inequality (see Proposition~\ref{prop:Poincare} below), and $\int_{\mathcal X} \phi(h)(z) \mu(\dd z)$ is equivalent to $H^1(\mu)$. For a more complete introduction, see~\cite{Gammacalculus}. In the singular case, $U$ is not convex and does not have a bounded Hessian, and a slightly more complex norm must be defined, as we will now see. Of course, one would also have to justify the derivative~\eqref{eq:derivative}. We will avoid this in the proof of Theorem~\ref{thm:estimates} thanks to the Lumer-Phillips theorem. \lj{Since, in the sequel of this work, $L$ will always denote the generator of the Langevin process, we will drop the dependency in $L$ to simplify the notation, and write $\Gamma_{\phi}$ instead of $\Gamma_{L,\phi}$.}

\subsection{\texorpdfstring{$H^1$}{H1}-setting}\label{s-sec:H1-hypo}

We now adapt the previous section to the singular case, as done in~\cite{BaudoinGordinaHerzog}, to serve as the initialization of our induction argument. However, the modified $H^1$-norm used in the cited work does not contain any Lyapunov function in the $\dot{H}^1$ part (i.e. the term with derivative of order 1), and we need some for the induction. Hence we use a slightly different norm, given in~\eqref{def:modified-Hr}, which becomes for $r=1$:
\begin{equation}\label{def:modified-H1}
    \|h\|_{mH^{1,k}} = \int_{\X} \po h^2 W_k + \omega\po |\na_y h|^2 + |(\na_x-\xi\na_y)h|^2\pf \po 1+\varepsilon W_{k-1}\pf  \pf\dd \mu  ,
\end{equation}
where $\xi=\frac{\gamma}{2} + \sqrt{\frac{\gamma^2}{4}+1}$, and $\varepsilon>0$, will be determined later. As in~\cite{BaudoinGordinaHerzog}, we need a local Poincar\'e inequality:

\begin{prop}\label{prop:Poincare}
For all compact set $K\subset\mathcal X$, $\mu$ satisfies a Poincar\'e inequality on $K$: there exists $\rho>0$ such that for all $f\in\mathcal C^{\infty}(\mathcal X,\R)$:
\begin{equation}\label{eq:poincare}
    \int_Kf^2\dd \mu \leqslant \rho \int_K |\na f|^2 \dd \mu + \frac{1}{\mu(K)}\left(\int_K f \dd \mu\right)^2.
\end{equation}
\end{prop}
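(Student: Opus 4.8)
The plan is to reduce the statement to a standard fact: any probability measure on $\R^N$ with a density that is bounded above and below by positive constants on a bounded connected open set with Lipschitz boundary satisfies a Poincaré inequality on that set. Concretely, fix a compact $K\subset\mathcal X$. Since $\mathcal X=\mathcal D\times\R^d$ with $\mathcal D$ open and $U\in\C^\infty(\mathcal D,\R_+)$, the density $e^{-\beta H(z)}/Z = e^{-\beta U(x)}e^{-\beta|y|^2/2}/Z$ is smooth and strictly positive on a neighborhood of $K$. First I would enlarge $K$ slightly: choose a bounded open set $O$ with $K\subset O\subset\bar O\subset\mathcal X$ and with $\bar O$ a finite union of closed balls (or any domain satisfying a Poincaré inequality for Lebesgue measure, e.g. a convex set or a domain with Lipschitz boundary). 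On $\bar O$, the continuous function $z\mapsto e^{-\beta H(z)}$ attains a minimum $m>0$ and a maximum $M<\infty$, so $m\,\dd z \leqslant Z\,\mu(\dd z)\leqslant M\,\dd z$ on $O$.

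The key steps are then: (i) recall the classical Poincaré inequality for the normalized Lebesgue measure on the nice domain $O$: there is $\rho_0>0$ with $\int_O (f - \langle f\rangle_O)^2\,\dd z \leqslant \rho_0 \int_O |\na f|^2\,\dd z$, where $\langle f\rangle_O$ is the Lebesgue-average of $f$ over $O$; (ii) transfer this to $\mu$ by comparing densities — writing $\bar f_\mu = \mu(\mathbf 1_O f)/\mu(O)$ for the $\mu$-average and using that $\bar f_\mu$ minimizes $c\mapsto \int_O (f-c)^2\dd\mu$, estimate
\[
\int_O (f-\bar f_\mu)^2\,\dd\mu \leqslant \int_O (f-\langle f\rangle_O)^2\,\dd\mu \leqslant \frac{M}{Z}\int_O(f-\langle f\rangle_O)^2\,\dd z \leqslant \frac{M\rho_0}{Z}\int_O |\na f|^2\,\dd z \leqslant \frac{M\rho_0}{m}\int_O |\na f|^2\,\dd\mu;
\]
(iii) finally restrict from $O$ down to $K$: since $K\subset O$,
\[
\int_K f^2\,\dd\mu = \int_K (f-\bar f_\mu)^2\,\dd\mu + 2\bar f_\mu\int_K f\,\dd\mu - \bar f_\mu^2\,\mu(K),
\]
and bounding $\int_K(f-\bar f_\mu)^2\dd\mu\leqslant \int_O(f-\bar f_\mu)^2\dd\mu$ by the previous line while noting $|\na f|^2$ on $O$ dominates its restriction to any subset, one arrives at an inequality of exactly the stated shape with $\rho := M\rho_0/m$ and the quadratic correction term $\mu(K)^{-1}(\int_K f\,\dd\mu)^2$ (here one uses that the map $c\mapsto \int_K(f-c)^2\dd\mu$ is minimized at $\mu(K)^{-1}\int_K f\,\dd\mu$, so $\int_K f^2\dd\mu - \mu(K)^{-1}(\int_K f\,\dd\mu)^2 = \int_K (f - \bar f_K)^2\dd\mu \leqslant \int_O (f-\bar f_\mu)^2\dd\mu$).

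The only genuinely delicate point is the choice of the intermediate domain $O$: one must be able to fit between the arbitrary compact $K$ and the open set $\mathcal X$ a domain on which the Lebesgue-measure Poincaré inequality is available. This is not an obstacle in substance — $K$ can be covered by finitely many open balls whose closures lie in $\mathcal X$ (using that $\mathcal D$ is open and $\mathcal X=\mathcal D\times\R^d$), and a finite union of balls, or better a connected bounded Lipschitz domain containing $K$ and contained in $\mathcal X$, always exists; such domains satisfy a Poincaré inequality by Rellich–Kondrachov compactness (the standard contradiction argument: if no $\rho_0$ worked, one extracts from a normalized minimizing sequence a Sobolev-convergent subsequence converging to a nonzero constant of zero mean, a contradiction). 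Connectedness of $\mathcal D$ is used only to guarantee one can take $O$ connected if one prefers the cleanest form; for the inequality with the averaging correction term it is in fact not needed, since the argument localizes everything to $O$. Once $O$ is fixed, steps (i)–(iii) are routine.
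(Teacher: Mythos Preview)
Your argument has a genuine gap in step~(iii). After enlarging $K$ to the nice domain $O$, your chain of inequalities ends with $\tfrac{M\rho_0}{m}\int_O |\na f|^2\,\dd\mu$ on the right-hand side, and the observation that ``$|\na f|^2$ on $O$ dominates its restriction to any subset'' only gives $\int_O |\na f|^2\,\dd\mu \geqslant \int_K |\na f|^2\,\dd\mu$ --- the wrong direction to conclude the stated inequality, which has the gradient integrated over $K$, not $O$. This is not a technicality that can be patched: for an \emph{arbitrary} compact $K$ the proposition is in fact false as written. Take $K$ to be the union of two disjoint closed balls $B_1,B_2\subset\X$ and $f$ smooth, equal to constants $a,b$ on neighbourhoods of $B_1,B_2$ respectively with $a\mu(B_1)+b\mu(B_2)=0$ and $a\neq 0$; then $\na f\equiv 0$ on $K$ and $\int_K f\,\dd\mu=0$, yet $\int_K f^2\,\dd\mu>0$.

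The paper itself gives no argument but simply cites~\cite{BakryGentilLedoux}, and in the sequel only ever applies the inequality to the specific compact set $J$ of Proposition~\ref{prop:Lyapunov}, which is declared connected (and is in practice the closure of a nice sublevel set). The correct route is therefore not to enlarge $K$ but to assume from the outset that $K$ is the closure of a bounded connected Lipschitz (or $C^1$) domain; then your density-comparison argument (steps (i)--(ii)) applied directly with $O=\mathrm{int}(K)$ yields the inequality with $\rho=M\rho_0/m$, and there is nothing to restrict in step~(iii). Your Rellich--Kondrachov discussion for producing $\rho_0$ is fine and is the standard mechanism.
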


\begin{proof}
Proof of such inequality can be found in~\cite{BakryGentilLedoux}.
\end{proof}

The functions $W$ and $V$ satisfy the following properties, which are described in \cite[Theorem 4.15]{BaudoinGordinaHerzog}:

\begin{prop}\label{prop:Lyapunov}
Under Assumptions~\ref{assu:basic} and~\ref{assu:growth}, for all $\bar b>0$, all $k\in\N$, there exists $V_{0,\bar b}:\X\to[1,\infty)$ such that if $V=V_{0,\bar b}^{1/k}$, $W_p=V^p + \lambda$ for some $\lambda>0$, $V:\X\to[1,\infty)$ is $\C^2$, and there exist $\alpha,\sigma>0$, $J\subset \X$ such that:
\begin{itemize}
    \item For all $\varepsilon>0$, there exists $C>0$ such that \begin{equation}\label{eq:order-lyapu}
        C^{-1}e^{(1-\varepsilon)\bar b H}<V<Ce^{(1+\varepsilon)\bar b H}.
    \end{equation}
    \item $J$ is compact, connected, and for all $p\in [1,k]$:
    \[
    L^*V^{p} \leqslant -\alpha \frac{p}{k}  V^{p} + \sigma \mathbbm{1}_J.
    \]
    \item For all $p\in\llbracket 1,k \rrbracket$, $g\in\C^{\infty}_b(\X,\R)$, we have that $Lg W_p,gL^*W_p\in L^1(\mu)$ and 
    \[
    \int_{\X}Lg W_p \dd\mu = \int_{\X}g L^* W_p \dd\mu.
    \]
    \item $\forall (x,y)\in\X$,
    \begin{equation}\label{eq:lower-bound-lyapu}
        V^k(x,y) \geqslant \frac{2\sigma\mu(J^c)}{\alpha\mu(J)}.
    \end{equation}
    \item $\forall (x,y)\in\X, v\in\R^d$,
    \begin{equation}\label{eq:control_R}
    W_k(x,y)|v|^2\geqslant \beta(\sigma\rho'+1) \po  \po \frac{1}{2} + \frac{2}{\gamma^2} \pf|v|^2 + \frac{1}{2\gamma^2}|\na^2 U v|^2 \pf,
    \end{equation}
    where 
    \begin{equation}\label{eq:rho-prime}
        \rho' = 4(1 + \xi^2)\rho/\gamma,
    \end{equation}
    and $\rho$ is the constant from the Poincar\'e inequality on $J$.
\end{itemize}
\end{prop}
\lj{In particular, \eqref{eq:order-lyapu} yields that $\lim_{H\rightarrow \infty}V =\infty$, and \eqref{eq:lower-bound-lyapu} that the set $J$ is not so large.} 
In order to study the evolution of the norm~\eqref{def:modified-H1} along the trajectories, we use Gamma calculus. \lj{All computations will be performed on functions $h:\X\to\R$ such that
\begin{equation}\label{eq:mean-zero}
\int_{\X}h \dd \mu = 0.
\end{equation}}
Recall the definition~\eqref{def:Gamma} of $\Gamma_{\phi}$. We have
\[
[L,\na_x] = \na^2U \na_y,\qquad [L,\na_y] = -\na_x + \gamma \na_y.
\]
Hence from Proposition~\ref{prop:calcul_Gamma} we get that:
\begin{equation*}
    \Gamma_{|\na_y \cdot|^2}(h) \geqslant  \gamma\beta^{-1}|\na_y^2 h|^2 - \na_y h\cdot(\na_x-\xi\na_y)h + (\gamma-\xi)|\na_yh|^2,
\end{equation*}
and:
\begin{align*}
\Gamma_{|(\na_x - \xi \na_y)\cdot|^2}(h) &\geqslant \gamma\beta^{-1}|(\na_x\na_y-\xi\na_y^2)h|^2 + \xi|(\na_x - \xi\na_y)h|^2 \\ &\quad+ \xi(\xi-\gamma)\na_y h\cdot(\na_x-\xi\na_y)h + \na^2U\na_y h\cdot(\na_x-\xi\na_y)h.
\end{align*}
Finally, with our choice of $\xi$ and $\phi(h) = |\na_y h|^2 + |(\na_x - \xi \na_y)h|^2$, one gets:
\begin{align*}
    \Gamma_{\phi}(h) &= \Gamma_{|\na_y \cdot|^2}(h) + \Gamma_{|(\na_x - \xi \na_y)\cdot|^2}(h)  \\&\geqslant  \gamma\beta^{-1}|\na_y^2 h|^2 + \gamma\beta^{-1}|(\na_x\na_y-\xi\na_y^2)h|^2 \\ &\quad+ \gamma |(\na_x - \xi\na_y)h|^2 - \frac{2}{\gamma}|\na_yh|^2 + \na^2U\na_y h\cdot(\na_x-\xi\na_y)h.
\end{align*}
Young inequality then yields:
\begin{equation}\label{eq:low_bound_Gamma}
    \Gamma_{\phi}(h) \geqslant  \gamma\beta^{-1}|\na_y^2 h|^2 + \gamma\beta^{-1}|(\na_x\na_y-\xi\na_y^2)h|^2 + \frac{\gamma}{2} |(\na_x - \xi\na_y)h|^2 - \mathcal{R}(x,\na_yh),
\end{equation}
where
\[
\mathcal R(x,v) = \frac{2}{\gamma}|v|^2 + \frac{|\na^2Uv|^2}{2\gamma}.
\]
Write:
\[
H(t) = \|P_tf-\int_{\mathcal X}f \dd\mu\|_{mH^{1,k}}.
\]
Because of the Lyapunov function, we have additional terms in the derivative of $H$ in comparison to the regular case. \lj{Using that $P_tf$ solves the Kolmogorov equation $\partial_tP_tf = LP_tf$, this formally reads
\begin{multline*}
H'(t) = \int_{\X}  P_tfLP_tf W_k \dd \mu \\+ \omega  \int_{\X}\po \na_yP_tf \cdot \na_y LP_tf + (\na_x - \xi\na_y)P_tf \cdot (\na_x - \xi\na_y)LP_tf \pf \po 1+\varepsilon W_{k-1}\pf  \dd \mu.
\end{multline*}

\begin{lem}\label{lem:formal-derivative-H1}
For all $h\in\mathcal C^{\infty}_b(\X,\R)$
\begin{multline*}
\int_{\X}  hLh W_k \dd \mu + \omega  \int_{\X}\po \na_yh \cdot \na_y Lh + (\na_x - \xi\na_y)h \cdot (\na_x - \xi\na_y)Lh \pf \po 1+\varepsilon W_{k-1}\pf  \dd \mu \\= \int_{\X} \po h^2L^*W_k - \gamma\beta^{-1}|\na_yh|^2W_k \pf\dd \mu + \omega\varepsilon  \int_{\X}\po |\na_yh|^2 + |(\na_x - \xi\na_y)h|^2 \pf L^* W_{k-1} \dd \mu \\ - \omega \int_{\X} \po \Gamma_{|\na_y\cdot|^2}(h)  + \Gamma_{|(\na_x - \xi\na_y)\cdot|^2}(h)\pf \po 1+\varepsilon W_{k-1}\pf  \dd \mu.
\end{multline*}
\end{lem}

\begin{proof}
The fact that $\mu$ is an invariant measure of the Langevin process yields that
\begin{equation*}
0 = \int_{\X}  L\po h^2 W_k \pf \dd \mu =  \int_{\X}  L\po h^2\pf W_k  \dd \mu +  \int_{\X} h^2f L W_k \dd \mu + 2\int_{\X}   \Gamma\po h^2f, W_k \pf \dd \mu.
\end{equation*}
This implies that 
\begin{multline*}
    2\int_{\X}  \Gamma\po h^2f, W_k \pf \dd \mu = -\int_{\X}  L\po h^2\pf W_k  \dd \mu -  \int_{\X} h^2f L W_k \dd \mu \\= -  \int_{\X} h^2f L^* W_k \dd \mu -  \int_{\X} h^2f L W_k \dd \mu,
\end{multline*}
so that
\[
0 = \int_{\X}  L\po h^2\pf W_k  \dd \mu - \int_{\X} h^2f L^* W_k \dd \mu.
\]
This equality allows us to write
\begin{multline*}
\int_{\X}  hLh W_k \dd \mu = \int_{\X}  hLh W_k \dd \mu - \int_{\X}  L\po h^2\pf W_k  \dd \mu + \int_{\X} h^2f L^* W_k \dd \mu \\ = -\int_{\X}  \Gamma\po h^2\pf W_k  \dd \mu + \int_{\X} h^2f L^* W_k \dd \mu.
\end{multline*}
The same computation yields
\begin{multline*}
 \int_{\X} \na_yh \cdot \na_y Lh \po 1+\varepsilon W_{k-1}\pf \dd \mu  = - \int_{\X} \Gamma_{|\na_y\cdot|^2}\po h \pf  \po 1+\varepsilon W_{k-1}\pf \dd \mu \\+ \int_{\X} |\na_yh|^2 L^* \po 1+\varepsilon W_{k-1}\pf \dd \mu,
\end{multline*}
and
\begin{multline*}
 \int_{\X} (\na_x - \xi\na_y)h \cdot (\na_x - \xi\na_y) Lh \po 1+\varepsilon W_{k-1}\pf \dd \mu  = - \int_{\X} \Gamma_{|(\na_x - \xi\na_y)\cdot|^2}\po h \pf  \po 1+\varepsilon W_{k-1}\pf \dd \mu \\+ \int_{\X} |(\na_x - \xi\na_y)h|^2 L^* \po 1+\varepsilon W_{k-1}\pf \dd \mu,
\end{multline*}
which concludes the proof.
\end{proof}
}

Hence to conclude in the $H^1$ setting, we need a control on the additional terms. This is the object of the next lemma, which is also the initialisation of the induction argument used in the next section for the $H^k$ setting:
\begin{lem}\label{lem:H1-hypo}
For all $k\in\N^*$, under Assumptions~\ref{assu:basic} and~\ref{assu:growth}, there exist $q,\omega,\varepsilon>0$ such that:
\begin{multline}\label{eq:H1-hypo}
\int_{\X} \po h^2L^*W_k - \gamma\beta^{-1}|\na_yh|^2W_k \pf\dd \mu \\  + \omega\varepsilon  \int_{\X}\po |\na_yh|^2 + |(\na_x - \xi\na_y)h|^2 \pf L^* W_{k-1} \dd \mu \\ - \omega \int_{\X} \po \Gamma_{|\na_y\cdot|^2}(h)  + \Gamma_{|(\na_x - \xi\na_y)\cdot|^2}(h)\pf \po 1+\varepsilon W_{k-1}\pf  \dd \mu \\ \leqslant -q\po \|h\|_{mH^{1,k}} + \int_{\X}  W_{k-1} \po |\na_y^2 h|^2 + |\na_x\na_yh|^2 \pf \dd\mu\pf,
\end{multline}
for all $h\in \C^{\infty}_b (\X,\R)$.
\end{lem}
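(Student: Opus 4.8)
The plan is to estimate the three groups of terms on the left of~\eqref{eq:H1-hypo} one at a time, and to fix the free parameters $\omega,\varepsilon,q$ only at the end (the shift $\lambda$ in $W_p=V^p+\lambda$ being taken large, as allowed by Proposition~\ref{prop:Lyapunov}). The ingredients are: the Lyapunov drift $L^*V^p\leqslant-\alpha\frac{p}{k}V^p+\sigma\mathbbm{1}_J$ of Proposition~\ref{prop:Lyapunov}, used for $p=k$ and $p=k-1$; the pointwise lower bound~\eqref{eq:low_bound_Gamma} for $\Gamma_{|\na_y\cdot|^2}+\Gamma_{|(\na_x-\xi\na_y)\cdot|^2}$; the control~\eqref{eq:control_R} of the remainder $\mathcal R(x,\cdot)$ by $W_k$; and the local Poincar\'e inequality~\eqref{eq:poincare} on the compact set $J$, exactly as in~\cite{BaudoinGordinaHerzog}. (Here $\xi=\frac{\gamma}{2}+\sqrt{\gamma^2/4+1}$ is the constant of~\eqref{def:modified-H1}, written $c$ in the $\Gamma$-identities above; recall $\xi(\xi-\gamma)=1$, which is what made the first-order cross terms in those identities cancel.)

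For the first group, $L^*W_k=L^*V^k$ gives $\int_\X h^2L^*W_k\,\dd\mu\leqslant-\alpha\int_\X h^2V^k\,\dd\mu+\sigma\int_Jh^2\,\dd\mu$, and $-\gamma\beta^{-1}\int_\X|\na_yh|^2W_k\,\dd\mu$ is kept as the reservoir for $\mathcal R$. For the second group, $L^*W_{k-1}$ turns it into a negative multiple of $\int_\X(|\na_yh|^2+|(\na_x-\xi\na_y)h|^2)V^{k-1}\,\dd\mu$ plus an error supported on $J$ (this group is absent for $k=1$, where $W_{k-1}$ is constant). For the third group, inserting~\eqref{eq:low_bound_Gamma} produces, with the good sign, $-\omega\gamma\beta^{-1}\int_\X(|\na_y^2h|^2+|(\na_x\na_y-\xi\na_y^2)h|^2)(1+\varepsilon W_{k-1})\,\dd\mu$ and $-\omega\frac{\gamma}{2}\int_\X|(\na_x-\xi\na_y)h|^2(1+\varepsilon W_{k-1})\,\dd\mu$, plus the bad term $+\omega\int_\X\mathcal R(x,\na_yh)(1+\varepsilon W_{k-1})\,\dd\mu$; from the second-order part, the elementary inequality $|\na_x\na_yh|^2\leqslant 2|(\na_x\na_y-\xi\na_y^2)h|^2+2\xi^2|\na_y^2h|^2$ lets me split off a negative multiple of $\int_\X W_{k-1}(|\na_y^2h|^2+|\na_x\na_yh|^2)\,\dd\mu$ --- exactly the extra negative quantity on the right of~\eqref{eq:H1-hypo}, a quantity that does not appear in $\|h\|_{mH^{1,k}}$ itself.

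It then remains to dispose of the error terms and to bound below what is left by $-q\|h\|_{mH^{1,k}}$. The compact-set errors from $\sigma\mathbbm{1}_J$ are handled as in~\cite{BaudoinGordinaHerzog}: the $h^2$ error obeys $\sigma\int_Jh^2\,\dd\mu\leqslant\frac{\sigma}{\lambda}\int_\X h^2W_k\,\dd\mu$ and is absorbed by $-\alpha\int_\X h^2V^k\,\dd\mu$ for $\lambda$ large (alternatively by~\eqref{eq:poincare} on $J$, using that the lemma is applied to $h=P_tf-\mu(f)$, so $\mu(h)=0$), while the first-order errors on $J$ are absorbed by the corresponding $V^{k-1}$-weighted negative terms of the second group, using $V\geqslant 1$ and taking $\alpha$ large relative to $\sigma$ in the Lyapunov construction. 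The remainder $\omega\int_\X\mathcal R(x,\na_yh)(1+\varepsilon W_{k-1})\,\dd\mu$ I split along the weight: by~\eqref{eq:control_R} its weight-$1$ piece is at most a fixed multiple of $\omega\int_\X W_k|\na_yh|^2\,\dd\mu$, absorbed by a fraction of the reservoir once $\omega$ is small; for its weight-$\varepsilon W_{k-1}$ piece one must use that under Assumption~\ref{assu:growth} the Hessian $|\na^2U|^2$ grows at most polynomially in $U$, whereas $V$ is super-polynomially large in $H$ by Proposition~\ref{prop:Lyapunov}, so $|\na^2U(x)|^2\leqslant CV(x,y)$ and hence $|\na^2U\na_yh|^2W_{k-1}\leqslant C'W_k|\na_yh|^2$ --- this piece too then goes into the reservoir once $\varepsilon$ is small. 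Finally $q$ is chosen small enough that $-q\|h\|_{mH^{1,k}}$ together with $-q\int_\X W_{k-1}(|\na_y^2h|^2+|\na_x\na_yh|^2)\,\dd\mu$ is dominated by the leftover negative terms, using that $\varepsilon V^{k-1}$ and the residual part of the reservoir are comparable to $1+\varepsilon W_{k-1}$ and to $W_k$ up to constants depending on $\lambda$.

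The main difficulty is not any single estimate but the consistent ordering of the parameter choices together with the mismatch of weights: the $\Gamma$-terms carry $1+\varepsilon W_{k-1}$, the Lyapunov terms carry $V^{k-1}$ or $V^k$, and the single reservoir $-\gamma\beta^{-1}\int_\X|\na_yh|^2W_k\,\dd\mu$ carries $W_k$, and one must relate them through $W_k=VW_{k-1}-\lambda(V-1)$ and $V\geqslant 1$ so that no negative term is spent twice. The one genuinely non-cosmetic point is the control of the $W_{k-1}$-weighted piece of the remainder, $\int_\X|\na^2U\na_yh|^2W_{k-1}\,\dd\mu$: the weight-$1$ bound~\eqref{eq:control_R} does not suffice there, and one really needs $|\na^2U|^2=o(V)$, i.e. a Lyapunov function of order $e^{\bar bH}$ with $\bar b>0$ --- which is precisely why Assumption~\ref{assu:growth} is imposed in its non-logarithmic form, as anticipated in the discussion after Theorem~\ref{thm:estimates}.
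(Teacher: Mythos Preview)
Your proposal identifies all the right ingredients (Lyapunov drift, local Poincar\'e on $J$, the pointwise lower bound~\eqref{eq:low_bound_Gamma}, the control~\eqref{eq:control_R}, and the key inequality $|\na^2U|^2\leqslant CV$), and the overall architecture matches the paper's. There is, however, a genuine gap in the parameter book-keeping which makes your argument, as written, fail in general.

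The trouble is your handling of the compact-set error $\sigma\int_Jh^2\,\dd\mu$. Your primary route, the ``$\lambda$-trick'' $\sigma\int_Jh^2\leqslant\tfrac{\sigma}{\lambda}\int h^2W_k$, cannot be absorbed by $-\alpha\int h^2V^k$: combining gives $(-\tfrac{\alpha}{1+\lambda}+\tfrac{\sigma}{\lambda})\int h^2W_k$, which is negative only if $\alpha>\sigma$, and Proposition~\ref{prop:Lyapunov} gives you no freedom to ``take $\alpha$ large relative to $\sigma$'' --- these constants are fixed by the Lyapunov construction. The same objection applies to your treatment of the first-order errors on $J$ from the second group. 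You therefore must fall back on your Poincar\'e alternative, as the paper does. But then there is a second problem: you have already broken the \emph{weight-$1$} piece of $-\omega\int\Gamma$ via~\eqref{eq:low_bound_Gamma}, producing the bad term $\omega\int\mathcal R$. Feeding both this and the Poincar\'e $|\na_yh|^2$-error into the single reservoir costs $\tfrac{\gamma\beta^{-1}(\omega+\sigma\rho')}{\sigma\rho'+1}\int W_k|\na_yh|^2$, so you need $\omega<1$; meanwhile the Poincar\'e $|(\na_x-\xi\na_y)h|^2$-error must go into $-\omega\tfrac{\gamma}{2}\int|(\na_x-\xi\na_y)h|^2$, forcing $\omega>\sigma\rho'$. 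These constraints are incompatible unless $\sigma\rho'<1$, which is not guaranteed.

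The paper avoids this by \emph{not} breaking the weight-$1$ piece of $-\omega\int\Gamma$. Instead it adds and subtracts $\mathcal R$ inside the Poincar\'e gradient error, writing it as $\sigma\rho'\int(\tfrac{\gamma}{2}|(\na_x-\xi\na_y)h|^2-\mathcal R)+\sigma\rho'\int(\mathcal R+\tfrac{\gamma}{2}|\na_yh|^2)$. The first bracket is $\leqslant\int\Gamma$ by~\eqref{eq:low_bound_Gamma} (used in the reverse direction), and is swallowed whole by $-\omega\int\Gamma$ once $\omega>\sigma\rho'$; the second bracket goes into the reservoir with coefficient $\tfrac{\sigma\rho'}{\sigma\rho'+1}<1$, which always fits. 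Only the $\varepsilon W_{k-1}$-weighted piece of $\Gamma$ is expanded via~\eqref{eq:low_bound_Gamma}, and its $\mathcal R$-remainder is the one controlled by $|\na^2U|^2\leqslant CV$ and smallness of~$\varepsilon$. So the correct parameter order is: $\omega>\sigma\rho'$ first, then $\varepsilon$ small --- not $\omega$ small as you wrote.
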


Notice that in contrast to inequality (4.8) of~\cite{BaudoinGordinaHerzog}, we keep the higher order derivative for the induction, see Section~\ref{s-sec:Hr-hypo}. 

\begin{proof}
We first treat the $L^2$-term using the Lyapunov property of $V$:
\[
\int_{\X} h^2L^*W_k \dd \mu \leqslant -\frac{\alpha}{1+\lambda} \int_{\X} h^2 W_k \dd \mu + \sigma \int_{J} h^2 \dd \mu.
\]
\lj{Using Cauchy-Schwarz inequality, inequality~\eqref{eq:lower-bound-lyapu} and that $h$ satisfies~\eqref{eq:mean-zero}, one gets
\[
\po \int_{J} h \dd\mu \pf^2 = \po \int_{J^c} h \dd\mu \pf^2 \leqslant \mu(J^c) \int_{\mathcal X} h^2 \dd\mu \leqslant \frac{\alpha \mu(J)}{2\sigma(1+\lambda)} \int_{\mathcal X} h^2 W_k \dd\mu.
\]
The local Poincar\'e inequality~\eqref{eq:poincare} on $J$ can then be written as follow:
\begin{align*}
\sigma \int_{J} h^2 \dd \mu &\leqslant  \sigma \rho'\frac{\gamma}{2}\int_{J}(|(\na_x-\xi\na_y)h|^2 + |\na_yh|^2)\dd \mu + \frac{\sigma}{\mu(J)}\po\int_{J} h \dd\mu \pf^2 \\ & \leqslant \sigma\rho'\int_{\X} \frac{\gamma}{2}|(\na_x-\xi\na_y)h|^2 - \mathcal R(x,\na_yh) + \mathcal R(x,\na_yh) + |\na_yh|^2\dd \mu \\ & \qquad + \frac{\alpha}{2(1+\lambda)} \int_{\X} h^2 W_k \dd \mu,
\end{align*}
where $\rho'$ was defined in~\eqref{eq:rho-prime}.}
Using inequality~\eqref{eq:control_R}, we get:
\[
\int_{\X} \mathcal R(x,\na_yh) + \frac{\gamma}{2}|\na_yh|^2\dd \mu \leqslant \frac{\gamma\beta^{-1}}{\sigma\rho'+1}\int_{\X} |\na_yh|^2W_k \dd \mu.
\]
We also have from inequality~\eqref{eq:low_bound_Gamma}:
\[
\int_{\X} \frac{\gamma}{2}|(\na_x-c\na_y)h|^2 - \mathcal R(x,\na_yh) \dd \mu \leqslant \int \Gamma_{|\na_y\cdot|^2}(h) + \Gamma_{|(\na_x - c\na_y)\cdot|^2}(h) \dd \mu. 
\]
We treat the second line of~\eqref{eq:H1-hypo} using the Lyapunov property as follow:
\begin{align*}
    \int_{\X}& \po |\na_y h|^2L^*W_{k-1} + |(\na_x-\xi\na_y)h|^2L^*W_{k-1} \pf \dd \mu \\ &\leqslant \int_{\X} \po |\na_y h|^2 + |(\na_x-\xi\na_y)h|^2 \pf \sigma\mathbbm{1}_J\dd \mu \\ &\leqslant \frac{2\sigma}{\gamma} \int_{\X} \po \frac{\gamma}{2}|\na_y h|^2 + \mathcal R(x,\na_y h) + \frac{\gamma}{2} |(\na_x-\xi\na_y)h|^2 - \mathcal R(x,\na_y h) \pf \dd\mu \\ &\leqslant  \frac{2\sigma}{\beta(1+\sigma\rho')}\int_{\X} |\na_y h|^2W_k \dd\mu + \frac{2\sigma}{\gamma} \int \Gamma_{|\na_y\cdot|^2}(h) + \Gamma_{|(\na_x - \xi\na_y)\cdot|^2}(h) \dd \mu.
\end{align*}
Next, the $\varepsilon$-term of the third line is bounded thanks to~\eqref{eq:low_bound_Gamma} as follow:
\begin{multline*}
    - \int_{\X} \po \Gamma_{|\na_y\cdot|^2}(h)  + \Gamma_{|(\na_x - \xi\na_y)\cdot|^2}(h)\pf  W_{k-1}  \dd \mu \\ \leqslant -\gamma \beta^{-1} \int_{\X} \po |\na_y^2 h|^2 + |(\na_x\na_y-\xi\na_y)h|^2 \pf W_{k-1} \dd \mu \\ - \frac{\gamma}{2}\int_{\X} |(\na_x-\xi\na_y)h|^2W_{k-1} \dd\mu + \int_{\X} \mathcal R(x,\na_y h) W_{k-1} \dd\mu. 
\end{multline*}
Thanks to Assumption~\ref{assu:growth} we get that there is $C>0$ such that $|\na^2U|\leqslant CV$. This yields that
\begin{multline*}
\mathcal R(x,\na_y h) W_{k-1} \leqslant C' \po |\na_yh|^2 + |\na^2U\na_yh|^2 \pf W_{k-1} \\\leqslant C'' \po |\na_yh|^2 + |\na_yh|^2 V^{1/(k-1)}  \pf W_{k-1} \leqslant \kappa |\na_yh|^2 W_k
\end{multline*}
and:
\begin{multline*}
- \int_{\X} \po \Gamma_{|\na_y\cdot|^2}(h)  + \Gamma_{|(\na_x - \xi\na_y)\cdot|^2}(h)\pf  W_{k-1}  \dd \mu  \\ \leqslant -\gamma \beta^{-1} \int_{\X} \po |\na_y^2 h|^2 + |\na_x\na_yh|^2 \pf W_{k-1} \dd \mu  \\ - \frac{\gamma}{2}\int_{\X} |(\na_x-\xi\na_y)h|^2W_{k-1} \dd\mu + \kappa \int_{\X} |\na_y h|^2 W_k \dd \mu.
\end{multline*}
We then get the following upper bound for the left-hand side of equation~\eqref{eq:H1-hypo}:
\begin{multline*}
    -\frac{\alpha}{2(1+\lambda)} \int_{\X} h^2 W_k \dd \mu  \\ -\po \gamma\beta^{-1} -  \frac{2\varepsilon\sigma\beta^{-1}}{\sigma\rho'+1} - \kappa\varepsilon\omega\pf\int_{\X} |\na_yh|^2 W_k \dd \mu \\  -\varepsilon\omega\gamma \beta^{-1} \int_{\X} \po |\na_y^2 h|^2 + |\na_x\na_yh|^2 \pf W_{k-1} \dd \mu  \\ + \po \frac{2\sigma\varepsilon\omega}{\gamma} + \sigma\rho' - \omega \pf \int_{\X} \Gamma_{|\na_y\cdot|^2}(h)  + \Gamma_{|(\na_x - \xi\na_y)\cdot|^2}(h) \dd\mu \\  -\frac{\gamma\varepsilon\omega}{2}\int_{\X} |(\na_x-\xi\na_y)h|^2W_{k-1} \dd\mu.
\end{multline*}
Taking $\omega > \sigma\rho'$, $\varepsilon$ small enough so that the second and fourth terms of the previous bounds are negative, and using the fact that $W_{k-1}\leqslant W_k$ (because $V\geqslant 1$) then concludes the proof.
\end{proof}
Removing the second order terms, \lj{Lemma~\ref{lem:formal-derivative-H1} and~\ref{lem:H1-hypo}} formally yields:
\begin{align*}
    H'(t) \leqslant & -qH(t),
\end{align*}
for some $q>0$. Since $W_k\geqslant 1$ for all $k>0$, this would imply convergence in $H^1$-norm
\[\|P_tf-\mu(f)\|_{H^1}\leqslant Ce^{-qt}\|f-\mu(f)\|_{mH^{1,k}},\]
for some $C>0$. This is only a formal computation, which we will justify in the proof of Theorem~\ref{thm:Hr-hypo}.

\subsection{\texorpdfstring{$H^k$}{Hr}-setting}\label{s-sec:Hr-hypo}

Building upon the result of the previous section, we may now perform computations in higher-order Sobolev spaces. From those computations arise derivatives of $U$ of arbitrary orders, hence we now need to assume Assumption~\ref{assu:Hr-setting}. Since the Lyapunov function satisfies $V=e^{bH+o(H)}$, for some $0<b<\beta$, this assumption implies: for all $\alpha\in\N^d$, there exists $\kappa_{\alpha}>0$ such that
\begin{equation}\label{eq:control-derivative}
    |\partial^{\alpha} U | \leqslant \kappa_{\alpha} V.
\end{equation}
Recall the definition of the modified Sobolev norm from~\eqref{def:modified-Hr}, fix some $r\in\llbracket 1,k\rrbracket$, and write:
\[
H_{r,k}(t) = \|P_tf-\int_{\X} f\dd \mu \|_{mH^{r,k}}.
\]
\lj{The same proof as Lemma~\ref{lem:formal-derivative-H1} yields the following formal derivative}:
\begin{align}\label{eq:derivativeHk}
   H_{r,k}'(t) &=  \int_{\X} \po (P_tf)^2L^*W_k -   \gamma\beta^{-1} |\na_yP_tf|^2 W_k \pf \dd\mu \nonumber\\ &\qquad + \int_{\X} \sum_{p=1}^r \varepsilon_p \po \sum_{i=0}^{p-1} \omega_{i,p}|\na_x^i\na_y^{p-i} P_tf |^2 + \omega_p  |\po\na_x^p-\xi\na_x^{p-1}\na_y\pf P_tf|^2 \pf L^*W_{k-p} \dd \mu \nonumber\\ &\qquad
   - \int_{\X} \sum_{p=1}^r \po\sum_{i=0}^{p-1}  \omega_{i,p}\Gamma_{i,p-i}(P_tf) + \omega_p \Gamma_p(P_tf) \pf \po 1+\varepsilon_p W_{k-p} \pf\dd\mu
\end{align}
where we wrote:
\[
\Gamma_{l,p} = \Gamma_{\left|\na_x^l\na_y^p\cdot\right|^2},
\]
and 
\[
\Gamma_p = \Gamma_{\left|\po\na_x^p-\xi\na_x^{p-1}\na_y\pf\cdot\right|^2}
\]
for any $l,p\in \N\times\N^*$.
As in the previous section, we need to bound this derivative. This bound is obtained through an induction principle on $r\in \llbracket1,k\rrbracket$, and the repetitive use of Proposition~\ref{prop:calcul_Gamma}. First we compute:
\[
[L,\na_x^l\na_y^p] = \gamma \na_x^l\na_y^p - \mathbbm{1}_{p\geqslant 1}\na_x^{l+1}\na_y^{p-1} + \sum_{i=1}^{l} \begin{vmatrix} l\\ i\end{vmatrix} \na^{i+1} U\otimes \na_x^{l-i}\na_y^p, 
\]
where the terms in the sum as to be understood as:
\[
\begin{vmatrix} l\\ i\end{vmatrix} \na^{i+1} U \otimes \na_x^{l-i}\na_y^p = \left( \sum_{j=1}^d\sum_{k_s\leqslant l_s} \prod_{s=1}^d \binom{l_s}{k_s} \partial_{x_j}\partial^{k}U \partial_{y_j} \partial_x^{l-k}\partial_y^{p+1}  \right)_{\sum l_j=l,\sum p_j =p}.
\]
Using Proposition~\ref{prop:calcul_Gamma}, we get that:
\begin{align*}
    \Gamma_{l,p}(h) &= \Gamma(\na_x^l\na_y^ph) + \na_x^l\na_y^ph \cdot [L,\na_x^l\na_y^p]h.
\end{align*}
If $p\geqslant 2$, we may then bound below using inequality~\eqref{eq:control-derivative}:
\begin{equation}\label{eq:low_bound_Gamma_l_p}
    \Gamma_{l,p}(h) \geqslant \gamma\beta^{-1}|\na_x^l\na_y^{p+1}h|^2  -\theta_{l,p} \po |\na_x^l\na_y^ph|^2 + V \sum_{i=1}^l|\na_x^{l-i}\na_y^{p+1}h|^2 \pf - \frac{1}{4\gamma}|\na_x^{l+1}\na_y^{p-1}|^2,
\end{equation}
for some $\theta_{l,p}\geqslant 0$. If $p=1$, there is a term $|\na_x^{l+1}h|^2$ in the commutator, which will be problematic, as we cannot control it by induction using the derivative of the lower-order terms. In order to take care of it, we fix a small parameter $\eta_l>0$, use the inequality $a\cdot b \leqslant |a|^2/(2\eta) + \eta|b|^2/2$ and we bound below again using inequality~\eqref{eq:control-derivative}:
\begin{multline}\label{eq:low_bound_Gamma_l_1}
    \Gamma_{l,1}(h) \geqslant \gamma\beta^{-1}|\na_x^l\na_y^{2}h|^2  - \theta_{l,1} \po |\na_x^l\na_y^1h|^2 + V\sum_{i=1}^l|\na_x^{l-i}\na_y^{2}h|^2 \pf - \eta_l|(\na_x^{l+1}-\xi\na_x^l\na_y)h|^2.
\end{multline}
As usual in hypocoercive computations for kinetic processes, the derivative of a Sobolev norm of order $p$ will lack in its derivative a term of the form $|\na_x^ph|^2$. It will be the derivative of the cross term $\na_x^ph.\na^{p-1}_x\na_y$ in $|\left(\na_x^p-c\na_x^{p-1}\na_y\right)h|^2$ that will give us this missing derivative. Indeed, using that $\xi\geqslant \gamma$ and inequality~\eqref{eq:control-derivative}, one has:
\begin{multline}
\na_x^ph\cdot [L,-\xi \na_x^{p-1}\na_y]h = \xi \na_x^ph\cdot \po \na_x^p - \gamma \na_x^{p-1}\na_y -  \sum_{i=1}^{p} \begin{vmatrix} p\\ i\end{vmatrix} \na^{i+1} U\otimes \na_x^{p-i}\na_y  \pf h \\ \geqslant \frac{\gamma}{2} |\na_x^p h|^2 - C V \po  \sum_{l=1}^p |\na_x^{p-l}\na_yh|^2 + \sum_{l=1}^{p-1} |\na_x^{p-1-l}\na_y^{2}h|^2 \pf.
\end{multline}
We also have that
\[
|\po \na_y\na_x^p-\xi\na_x^{p-1}\na_y^2\pf h|^2 \geqslant \frac{1}{2} |\na_x^{p}\na_y h|^2 - \frac{1}{2}\xi |\na_x^{p-1}\na_y^2 h|^2
\]
Hence after rearranging the terms, Proposition~\ref{prop:calcul_Gamma} again yields:
\begin{multline}\label{eq:low_bound_Gamma_p_seul}
    \Gamma_p(h) \geqslant \\ \frac{\gamma\beta^{-1}}{2}|\na_x^{p}\na_y h|^2 +  \frac{\gamma}{2}|\po\na_x^p-\xi\na_x^{p-1}\na_y\pf h|^2 - \theta_{p,0} V \po  \sum_{l=1}^p |\na_x^{p-l}\na_yh|^2 + \sum_{l=0}^{p-1} |\na_x^{p-1-l}\na_y^{2}h|^2 \pf , 
\end{multline}
for some $\theta_{p,0}>0$. Thanks to those different lower bounds, we will be able to prove the next lemma which is the central part of this section.
\begin{lem}\label{lem:Hr-hypo}
Fix $k\in\N^*$. There exists $(\omega_{i,p})_{1\leqslant i\leqslant p\leqslant k}$, $(\varepsilon_p)_{1\leqslant p\leqslant k}$, all positive, such that for all $r\in\llbracket1,k\rrbracket$, there exists $q_r>0$ such that for all $h\in\C_b^{\infty}(\mathcal X,\R)$:
\begin{align}\label{eq:hypo-rec}
    &\int_{\X} \po h^2L^*W_k -   \gamma\beta^{-1} |\na_yh|^2 W_k \pf \dd\mu \nonumber \\ &\qquad + \int_{\X} \sum_{p=1}^r \varepsilon_p \po \sum_{i=0}^{p-1} \omega_{i,p}|\na_x^i\na_y^{p-i} h |^2 + \omega_p  |\po\na_x^p-\xi\na_x^{p-1}\na_y\pf h|^2 \pf L^*W_{k-p} \dd \mu \nonumber \\ &\qquad
   - \int_{\X} \sum_{p=1}^r \po\sum_{i=0}^{p-1}  \omega_{i,p}\Gamma_{i,p-i}(h) + \omega_p \Gamma_p(h) \pf \po 1+\varepsilon_p W_{k-p} \pf\dd\mu \nonumber \\ &\leqslant -q_r\po \|h\|_{mH^{r,k}} + \int_{\X} \sum_{i=0}^r |\na_x^i\na_y^{r+1-i} h |^2 W_{k-r} \dd \mu \pf.
\end{align}
\end{lem}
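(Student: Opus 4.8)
The plan is to argue by induction on $r\in\llbracket 1,k\rrbracket$. The base case $r=1$ is exactly Lemma~\ref{lem:H1-hypo}: its right-hand side is $-q(\|h\|_{mH^{1,k}} + \int_{\X} W_{k-1}(|\na_y^2h|^2 + |\na_x\na_yh|^2)\,\dd\mu)$, which matches the claimed form for $r=1$ since the second-order block there is precisely $\sum_{i=0}^1|\na_x^i\na_y^{2-i}h|^2 W_{k-1}$. One caveat: the $r=1$ norm in Lemma~\ref{lem:H1-hypo} uses a single $\omega$ and $\varepsilon$ rather than the doubly-indexed families; the first step is therefore to observe that the families $(\omega_{i,p})$, $(\varepsilon_p)$ are chosen by a descending bootstrap — one fixes them from $p=k$ downward (or equivalently, picks each $\varepsilon_p,\omega_{i,p}$ much smaller than those with smaller $p$) so that at each level the newly-added terms are a small perturbation of the already-controlled lower-order block.

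For the inductive step, assume~\eqref{eq:hypo-rec} holds at level $r-1$ and add the $p=r$ layer. The extra contribution consists of three pieces: the Lyapunov term $\varepsilon_r\int(\sum_i\omega_{i,r}|\na_x^i\na_y^{r-i}h|^2 + \omega_r|(\na_x^r-\xi\na_x^{r-1}\na_y)h|^2)L^*W_{k-r}\,\dd\mu$, and the two Gamma terms $-\int(\sum_i\omega_{i,r}\Gamma_{i,r-i}(h)+\omega_r\Gamma_r(h))(1+\varepsilon_r W_{k-r})\,\dd\mu$. For the Gamma terms one inserts the lower bounds~\eqref{eq:low_bound_Gamma_l_p},~\eqref{eq:low_bound_Gamma_l_1},~\eqref{eq:low_bound_Gamma_p_seul}. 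Each $\Gamma_{i,r-i}$ with $r-i\geqslant 2$ contributes a good negative term $-\gamma\beta^{-1}|\na_x^i\na_y^{r+1-i}h|^2$, a bad term $+\theta|\na_x^i\na_y^{r-i}h|^2$ of order $r$ (absorbed by the induction's gain at level $r-1$, after taking $\omega_{i,r}$ small), a bad term $+\theta V\sum_{j}|\na_x^{i-j}\na_y^{r+1-i}h|^2$ of order $r+1$ but with one fewer $x$-derivative, and a lower-order leftover $-\frac{1}{4\gamma}|\na_x^{i+1}\na_y^{r-i-1}h|^2$ which is harmless (it is negative). The term $\Gamma_{l,1}$ ($p=1$, $l=r-1$) produces the problematic $|\na_x^r h|^2$-type term, handled via the parameter $\eta_l$ as in~\eqref{eq:low_bound_Gamma_l_1}: it is converted into $-\eta_l|(\na_x^r-\xi\na_x^{r-1}\na_y)h|^2$, which is then dominated by the good $+\frac{\gamma}{2}|(\na_x^r-\xi\na_x^{r-1}\na_y)h|^2$ coming from $\Gamma_r$ in~\eqref{eq:low_bound_Gamma_p_seul} once $\eta_l$ is chosen small relative to $\omega_r$. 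The cross-term in $\Gamma_r$ is exactly what supplies the otherwise-missing $\frac{\gamma}{2}|\na_x^r h|^2$ needed to close the $r$-th order block.

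The Lyapunov contributions are handled as in the $H^1$ proof: $L^*W_{k-r}\leqslant \sigma\mathbbm 1_J$, so $\varepsilon_r\int(\cdots)L^*W_{k-r}\,\dd\mu\leqslant \varepsilon_r\sigma\int_J(\cdots)\,\dd\mu$; one then applies the local Poincaré inequality (Proposition~\ref{prop:Poincare}) on $J$ to each order-$r$ derivative block, trading $\int_J|\na_x^i\na_y^{r-i}h|^2\,\dd\mu$ against $\int_J|\na_x^{\bullet}\na_y^{\bullet}h|^2\,\dd\mu$ of order $r+1$ plus a mean term; the latter is absorbed into the induction's order-$(r-1)$ gain (which dominates all lower orders including the $L^2$ term $\int h^2W_k\,\dd\mu$), and the order-$(r+1)$ terms are absorbed into the good terms $\gamma\beta^{-1}|\na_x^i\na_y^{r+1-i}h|^2$ just produced, again after choosing $\varepsilon_r$ small. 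Finally, every bad term carrying a factor $V$ and derivatives of order $r+1$ with a deficit of $x$-derivatives — i.e.\ $\theta V|\na_x^{i-j}\na_y^{r+1-i}h|^2$ with $j\geqslant 1$ — is bounded using $V\cdot W_{k-r}\leqslant C W_{k-r+1}\leqslant C W_{k-r'}$ for the appropriate $r'<r$ (since $\na^2U$-type factors cost only one power of $V^{1/(k\text{-ish})}$, as in the $H^1$ estimate $\mathcal R(x,\na_yh)W_{k-1}\leqslant\kappa|\na_yh|^2W_k$), so these land in the induction hypothesis's right-hand side reservoir $\int\sum_i|\na_x^i\na_y^{(r-1)+1-i}h|^2W_{k-(r-1)}\,\dd\mu$ and are thereby consumed.

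The main obstacle is bookkeeping the ordered choice of constants: the good negative terms of order $r+1$ that we generate must simultaneously (i) dominate the order-$(r+1)$ debris coming from the Poincaré step, (ii) dominate the $\eta_l$-term converted from $\Gamma_{r-1,1}$, and (iii) after multiplication by $W_{k-r}$, not be overwhelmed by the $V$-weighted cross-debris — while the order-$r$ debris must be swallowed by the level-$(r-1)$ gain $q_{r-1}$, forcing $\omega_{i,r},\varepsilon_r,\eta_{r-1}$ all to be small in a definite order. One must verify the hierarchy is consistent, i.e.\ that no circular dependence arises: the clean way is to fix $k$, then choose $\varepsilon_k,\omega_{\cdot,k}$, then $\varepsilon_{k-1},\omega_{\cdot,k-1}$, and so on down to $p=1$, recording at each stage that the perturbation introduced is smaller than the accumulated gain; the resulting $q_r$ is then extracted as a fixed positive fraction of $q_{r-1}$ minus the (now controlled) perturbations. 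I would also remark, as the authors note after Lemma~\ref{lem:H1-hypo}, that the manipulation of $H_{r,k}'(t)$ is at this stage formal — integrations by parts and the identity $\int Lg\,W_p\,\dd\mu=\int gL^*W_p\,\dd\mu$ require justification for $g=P_tf$, which is deferred to the proof of Theorem~\ref{thm:Hr-hypo}; here we only establish the pointwise-in-$h$ functional inequality~\eqref{eq:hypo-rec}.
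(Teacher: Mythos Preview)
Your overall plan --- induction on $r$, base case Lemma~\ref{lem:H1-hypo}, then at each step add the $p=r$ layer using the lower bounds~\eqref{eq:low_bound_Gamma_l_p}--\eqref{eq:low_bound_Gamma_p_seul} and absorb the debris into the previous level's reservoir --- is exactly the paper's argument. Two points, however, diverge from the paper, and the first is a genuine gap.

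The paper does \emph{not} invoke the Poincar\'e inequality at the inductive step. After $L^*W_{k-r}\leqslant \sigma\mathbbm{1}_J\leqslant \sigma$, the resulting term $\varepsilon_r\sigma\omega_{i,r}\int_{\X}|\na_x^i\na_y^{r-i}h|^2\,\dd\mu$ (for $i\leqslant r-1$) is absorbed \emph{directly} into the induction hypothesis's reservoir $-q_{r-1}\int_{\X}\sum_{i=0}^{r-1}|\na_x^i\na_y^{r-i}h|^2 W_{k-(r-1)}\,\dd\mu$, simply by choosing $\varepsilon_r\omega_{i,r}$ small relative to $q_{r-1}$ (recall $W_{k-(r-1)}\geqslant 1$); the twisted piece $\varepsilon_r\sigma\omega_r\int_{\X}|(\na_x^r-\xi\na_x^{r-1}\na_y)h|^2\,\dd\mu$ is absorbed into the new good term coming from $\Gamma_r$. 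Your route through Poincar\'e produces a mean term $\frac{1}{\mu(J)}\big(\int_J\na_x^i\na_y^{r-i}h\,\dd\mu\big)^2$ which is \emph{not} controlled by the order-$(r-1)$ gain as you claim: Cauchy--Schwarz bounds it only by $\int_{\X}|\na_x^i\na_y^{r-i}h|^2\,\dd\mu$, which is order $r$ again, and lowering the order by an integration by parts on $J$ would produce boundary terms on $\partial J$. Replace this step by the direct absorption above.

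Two smaller corrections. The term $-\frac{1}{4\gamma}|\na_x^{i+1}\na_y^{r-i-1}h|^2$ in~\eqref{eq:low_bound_Gamma_l_p} becomes \emph{positive} once you pass to $-\Gamma_{i,r-i}$ on the left of~\eqref{eq:hypo-rec}; it has the \emph{same} total order $r$, not lower, and must be absorbed into the reservoir alongside the $\theta$-terms (the paper does exactly this, grouping it with the $\omega_{i+1,r}$ contribution). Finally, the order in which constants are fixed is the reverse of what you wrote: the induction runs upward from $p=1$, so one first fixes $(\omega,\varepsilon)$ at level $1$ via Lemma~\ref{lem:H1-hypo}, and at each subsequent step chooses $\omega_{i,r},\varepsilon_r,\eta_{r-1}$ small relative to the already-determined $q_{r-1}$. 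Your parenthetical ``each $\varepsilon_p,\omega_{i,p}$ much smaller than those with smaller $p$'' is the correct relation, but the selection proceeds from $p=1$ upward, not from $p=k$ downward.
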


\begin{proof}
The proof is done by induction. The case $r=1$ corresponds to Lemma~\ref{lem:H1-hypo}. Let $r\in\llbracket1,k\rrbracket$, and we suppose the inequality~\eqref{eq:hypo-rec} true for such an $r$. To prove the result at rank $r+1$, we want to bound:
\begin{align*}
    &-q_r\int_{\X} \po h^2W_k + \sum_{1\leqslant i+j\leqslant r,j\geqslant 1} |\na_x^i\na_y^{j} h |^2 W_{k-(i+j)} + \sum_{i=1}^r |\po\na_x^i-\xi\na_x^{i-1}\na_y\pf h |^2  W_{k-i}  \pf \dd\mu  \\ &\quad -q_r \int_{\X} \sum_{i=0}^r |\na_x^i\na_y^{r+1-i} h |^2 W_{k-r} \dd \mu \\ &\quad + \varepsilon_{r+1}\int_{\X} \po \sum_{i=0}^{r} \omega_{i,r+1}|\na_x^i\na_y^{r+1-i} h |^2 + \omega_{r+1}  |\po\na_x^{r+1}-\xi\na_x^{r}\na_y\pf h|^2 \pf L^*W_{k-(r+1)} \dd \mu \\ &\quad
   - \int_{\X} \po\sum_{i=0}^{r}  \omega_{i,r+1}\Gamma_{i,r+1-i}(h) + \omega_{r+1} \Gamma_{r+1}(h) \pf \po 1+\varepsilon_{r+1} W_{k-(r+1)} \pf\dd\mu.
\end{align*}
First we bound using the Lyapunov property:
\[
|\na_x^i\na_y^{r+1-i} h |^2 L^*W_{k-(r+1)} \leqslant \sigma |\na_x^i\na_y^{r+1-i} h |^2.
\]
The goal now is to check that for any order of derivative, we may chose $\varepsilon_{r+1}$, $(\omega_{i,r+1})$ and $(\omega_{r+1})$ such that inequality~\eqref{eq:hypo-rec} holds true with $(r+1)$ instead of $r$. To this end, we will use the inequality~\eqref{eq:low_bound_Gamma_l_p},\eqref{eq:low_bound_Gamma_l_1} and~\eqref{eq:low_bound_Gamma_p_seul} on the $\Gamma$'s. The term of order $r+2$ of derivatives is:
\begin{equation*}
-\gamma\beta^{-1}\int_{\X} \po \sum_{i=0}^r\omega_{i,r+1} |\na_x^i\na_y^{r+2-i}h|^2 + \frac{\omega_{r+1}}{2} |(\na_x^{r+1}\na_y - \xi\na_x^r\na_y^2)h|^2 (1+\varepsilon_{r+1}W_{k-(r+1)} ) \pf \dd\mu,
\end{equation*}
which is already what we were looking for. This does not impose any condition on the coefficient, except the positivity of the $(\omega)$. The derivative of order $r+1$ in $x$ which was missing from the rank $r$, is:
\begin{align*}
    \int_{\X} \po\varepsilon_{r+1}\omega_{r+1}\sigma - \frac{\gamma\omega_{r+1}}{2}(1+\varepsilon_{r+1}W_{k-(r+1)}) + \eta_r\omega_{r,r+1}  (1+\varepsilon_{r+1}W_{k-(r+1)})\pf  |\po\na_x^{r+1}-\xi\na_x^r\na_y\pf h|^2  \dd\mu.
\end{align*}
We choose $\eta_r = \frac{\gamma\omega_{r+1}}{8\omega_{r,r+1}}$, and $0<\varepsilon_{r+1}<\frac{\gamma}{8\sigma}$, so that we are left with:
\[
-\frac{\gamma\omega_{r+1}}{4}\int_{\X} |\po\na_x^{r+1}-\xi\na_x^r\na_y\pf h|^2\po1+\varepsilon_{r+1}W_{k-(r+1)}\pf  \dd\mu.
\]
For the other terms of order $r+1$, fix some $i\in\llbracket 0,r \rrbracket$. The term in $|\na_x^i\na_y^{r+1-i}h|^2$ is:
\begin{multline*}
    \int_{\X} \po -q_rW_{k-r}  + \sigma\varepsilon_{r+1}\omega_{i,r+1} \right. \\ \left. + \po \omega_{i,r+1}\theta_{i,r+1-i} + \omega_{i+1,r+1}\po 1/(4\gamma) + \theta_{i+1,r+1-(i+1)}V \pf \pf  (1+\varepsilon_{r+1} W_{k-(r+1)}) \pf |\na_x^i\na_y^jh|^2 \dd\mu.
\end{multline*}
Using that $VW_{k-(r+1)}\leqslant W_{k-r}(1+\lambda)$, we may choose the $(\omega_{i,r+1})$ small enough so that this last quantity is less than:
\[
-\frac{q_r}{2}\int_{\X} W_{k-r}|\na_x^i\na_y^jh|^2 \dd\mu.
\]
The lower order terms are treated in the same way, and this concludes the induction and the proof.
\end{proof}

\begin{proof}[Proof of Theorem~\ref{thm:Hr-hypo}]
Instead of trying to justify the derivative~\eqref{eq:derivativeHk}, we apply a semi-group argument. Our goal is to apply Lumer-Phillips theorem to the operator $L+q_k/2I$, where $I$ denotes the identity operator of $mH^k$. This theorem can be stated as follows: an operator $A$ on a Hilbert space generates a contraction semi-group if and only if it is maximally dissipative, see~\cite[Chapter IX, p.250]{Func_anal}. Fix $0<\bar b < \beta/(2d)$ and $k\in\N$. $mH^k$ is a Hilbert space, with scalar product
\begin{multline*}
\left\langle f,g \right\rangle_{mH^{k}}^2 = \int_{\X}  fg W_k\dd\mu \\ + \int_{\X} \sum_{p=1}^k \po \sum_{i=0}^{p-1}\omega_{i,p} \na_x^i\na_y^{p-i} f \cdot  \na_x^i\na_y^{p-i} g + \omega_{p,p} \po\na_x^p-\xi\na_x^{p-1}\na_y\pf f \cdot \po\na_x^p-\xi\na_x^{p-1}\na_y\pf g \pf \\ \po 1 + \varepsilon_p W_{k-p} \pf \dd \mu.
\end{multline*}
Lemma~\ref{lem:Hr-hypo} and a density argument yield that the operator $L+q_k/2I$, is dissipative:
\[
\forall h\in D(L),\, \left\langle(L+q_k/2I)h,h\right\rangle_{mH^k}\leqslant 0,
\]
where $D(L)$ denote the domain of $L$ \lj{in $mH^k$ defined by:
\[
f\in\D(L),\, g = Lf \Leftrightarrow f\in mH^k,\,  \lim_{t\rightarrow 0} \left\|\frac{P_tf-f}{t} - g\right\|_{mH^k} = 0.
\]
}We are left to show that $L+qI$, for some $q<q_k/2$, is surjective. Fix such a $q<q_k/2$. Thanks to Lemma~\ref{lem:Hr-hypo}, we have that \[\Lambda:(f,g)\mapsto \left\langle-(L+qI)f,g\right\rangle_{mH^k}\] is coercive and continuous from $\po mH^k\pf^2$ to $\R$, as long as $k\geqslant 2$. Hence, we may apply Lax-Milgram theorem to get that for all $g\in mH^k$, there exists $f\in mH^k$ such that for all $h\in mH^k$, we have: 
\[
\left\langle-(L+qI)f,h\right\rangle_{mH^k} = \left\langle-g,h\right\rangle_{mH^k},
\]
which implies that $f$ is a solution to the equation
\[
(L+qI)f = g,
\]
and $L+q_k/2I$ is maximally dissipative. Lumer-Phillips theorem then yields that the semi-group generated by $L+q_k/2$ is a contraction on $mH^k$: for all $f\in mH^k$
\[
\|e^{q_kt/2}P_tf\|_{mH^k} \leqslant \|f\|_{mH^k}.
\]
For all $f\in\mathcal P(U)$, we may fix $\bar b>0$ small enough so that $f\in mH^k$, and this concludes the proof.
\end{proof}

\subsection{Proof of Theorem~\ref{thm:estimates}}\label{s-sec:proof_estimates}

The proof of Theorem~\ref{thm:estimates}, based on Theorem~\ref{thm:Hr-hypo}, uses the Lyapunov structure of $V$.

\begin{proof}[Proof of theorem~\ref{thm:estimates}]
Fix some $f\in\mathcal P(U)$. Thanks to Proposition~\ref{prop:Lyapunov}, we may fix the parameter $\bar b$ from the Lyapunov function such that $\|f\|_{mH^k}<\infty$. For all $k\in\N$, we can apply Theorem~\ref{thm:Hr-hypo} to get some $C>0$ such that:
\[
\|P_tf\|_{mH^k} \leqslant Ce^{-q_k t}.
\]
Fix $0<b<\beta$, $C,\varepsilon>0$ such that $e^{(\beta-(1-\varepsilon)b)H}\leqslant CV_0$, for some $C>0$. Thanks to Assumption~\ref{assu:Hr-setting}, we may write for all $\alpha\in\N^{2d}$:
\begin{align*}
\int_{\mathcal X} \left| \partial^{\alpha} \left(P_tf e^{-bH} \right) \right|^2 &\leqslant C \sum_{|\sigma|\leqslant |\alpha|} \int_{\mathcal X} \left| \partial^{\sigma} P_tf  \right|^2\left| \partial^{\alpha-\sigma} e^{-bH}  \right|^2 \\ &\leqslant C'\sum_{|\sigma|\leqslant |\alpha|} \int_{\mathcal X} \left| \partial^{\sigma} P_tf  \right|^2 e^{-(1-\varepsilon)bH} \\ &= C'\sum_{|\sigma|\leqslant |\alpha|} \int_{\mathcal X} \left| \partial^{\sigma} P_tf  \right|^2 e^{(\beta-(1-\varepsilon)b)H}e^{-\beta H} \\ &\leqslant C''\|P_tf\|_{mH^{|\alpha|}} \\ &\leqslant C''e^{-q_{|\alpha|} t}.
\end{align*}
We conclude with Sobolev embedding.
\end{proof}

\section{Weak error expansion}\label{sec:scheme}

The main ingredient in the proof of a Talay-Tubaro expansion for a numerical scheme (or even simply weak convergence) are the estimates given in Theorem~\ref{thm:estimates}. It allows to control the error (in law) made by each step of the numerical scheme, uniformly in time. Hence we are now able to prove Theorem~\ref{thm:num_approx_finite_time} and Theorem~\ref{thm:num_approx_stationary}. This section is divided into two parts. The first one proves the existence of a Lyapunov function of order $e^{bH}$, for all $0<b<\beta$, as well as Theorem~\ref{thm:num_approx_finite_time}, about finite time expansion of the numerical scheme~\eqref{def:schema_stoped}. The second part shows the existence of a unique stationary measure for the numerical scheme, as well as Theorem~\ref{thm:num_approx_stationary} for the Talay-Tubaro expansion of this stationary measure.

\subsection{Finite-time expansion for the numerical scheme with rejection}

The reason why we are interested in the scheme~\eqref{def:schema_stoped} instead of the scheme~\eqref{def:schema_Euler} is Lemma~\ref{lem:Lyapunov}. To bound the error made by the numerical scheme, and hence to prove Theorem~\ref{thm:num_approx_finite_time}, we need uniform in $n\in\N$ and $\delta>0$ integrability. To get this integrability, we show that the Lyapunov structure of the continuous process described in~\cite{ergodicity2} still holds after discretization. This comes from the rejection mechanism that prevents the scheme from seeing the part of space where the gradient of the potential and its derivatives are too big compared to $\delta$, which is a neighborhood of the singularities and of infinity. Recall the definition of $\Hd$ from~\eqref{eq:domaine_scheme}, \lj{and denote $\zeta=4d\gamma\beta^{-1}$. To prove Lemma~\ref{lem:Lyapunov}, we first need to introduce a regularized version of the potential $\hat U$ and a modified kinetic energy $\hat W$, along with a numerical scheme corresponding to the counterpart of the scheme~\eqref{def:schema_Euler} for the new Hamiltonian $\hat H(x,y) = \hat U(x) + \hat W(y)$. A comparison between this new numerical scheme with the previously considered one will lead to the result.

\begin{lem}\label{lem:regularized-pot}
Under Assumptions~\ref{assu:basic} and~\ref{assu:growth}, there exist $\delta_0,\ell_0>0$ such that for all $0<\delta<\delta_0$ and $0<\ell<\ell_0$, there exist
\[
\hat U, \hat W:\R^d\to\R_+
\]
satisfying
\begin{itemize}
    \item $\hat U$ and $\hat W$ are bounded.
    \item $\hat U=U$, $\hat W=|\cdot|^2/2$ on $\Hd$.
    \item For all $(x,y)\in\Hd^c$\[\hat \phi(x,y) = \hat U(x) + \hat W(y) + \zeta \frac{\na\hat W\cdot\na \hat U(x)}{1+|\na \hat U(x)|^2}\geqslant 1/\delta^{\ell}.\]
    \item For all $\alpha\in\N^{2d}$, $1\leqslant |\alpha|_1\leqslant 4$, there exists $C_{\alpha}>0$, $\ell_{\alpha}<1/2$ such that: \begin{equation}\label{eq:bound_modified_pot}|\partial^{\alpha} \hat U|, |\partial^{\alpha} \hat W| \leqslant \frac{C_{\alpha}}{\delta^{\ell_{\alpha}}}.\end{equation}
\end{itemize}
\end{lem}

\begin{proof}
For all $0<\ell<1$, $0<\delta<1$, fix a smooth, increasing and concave function \[g_{\delta,\ell}:\R_+\to\R_+\] such that
\[
g_{\delta,\ell}(t) = t,\, \forall t\in [0,4\delta^{-\ell}];\quad g'_{\delta,\ell}(t) \leqslant \frac{2(4\delta^{-\ell})^2}{t^2},\quad |g^{(p)}_{\delta,\ell}(t)| \leqslant \frac{C}{t^{p+1}},\quad \forall t\geqslant 4\delta^{-\ell},\, p\in \llbracket 2,4\rrbracket,
\] for some $C>0$. Then define \[\hat U = g_{\delta,\ell}(U),\quad \hat W = g_{\delta,\ell}(|\cdot|^2/2).\]Let's show that $\hat U$ and $\hat W$ satisfy the desired conditions. First, the conditions on $g_{\delta,\ell}$ yield that
\[
g_{\delta,\ell} \leqslant 4\delta^{-\ell} + \int_{4\delta^{-\ell}}^{\infty} \frac{2(4\delta^{-\ell})^2}{t^2}\dd t = 12\delta^{-\ell},
\]
so that $\hat U$ and $\hat W$ are bounded. For all $(x,y)\in\Hd$, we have using Young's inequality:
\[
\delta^{-\ell} \geqslant U(x) + \frac{|y|^2}{2} + \zeta \frac{y\cdot \na U(x)}{1 + |\na U(x)|^2} \geqslant \frac{1}{2}H(x,y) - \zeta^2,
\]
so that $H \leqslant 2\delta^{-\ell} + 2\zeta^2$. As soon as $\delta^{-\ell}\geqslant\zeta^2$, we have that for all $(x,y)\in\Hd$, $U(x) \leqslant 4\delta^{-\ell}$ and $|y|^2/2 < 4\delta^{-\ell}$, yielding $\hat U(x), \hat W(y) = U(x), |y|^2/2$ and the second point.

The definition of the modified kinetic energy implies that $\na \hat W(y) = g'_{\delta,\ell}(|y|^2/2) y $, and the conditions on $g_{\delta,\ell}$ then yield
\[
\sup_{y\in\R^d}|\na \hat W(y)| \leqslant \max\po \sqrt{8\delta^{-\ell}}, \sup_{s\geqslant \sqrt{8\delta^{-\ell}}} g'(s) s \pf \leqslant 16 \delta^{-\ell/2}.
\]
To prove the third point of the lemma, we distinguish two cases: if $(x,y)\notin \Hd$ satisfies $U(x)<4\delta^{-\ell}$ and $|y|^2/2<4\delta^{-\ell}$, then $\hat\phi(x,y) = \phi(x,y) \geqslant \delta^{-\ell}$ by definition of $\Hd$. If $U(x)>4\delta^{-\ell}$ or $|y|^2/2>4\delta^{-\ell}$, we have
\[
\hat \phi(x,y) \geqslant 4\delta^{-\ell} - 16\zeta \delta^{-\ell/2} \geqslant \delta^{-\ell}, 
\]
for $\delta$ small enough depending on $\ell>0$. The last point is a direct consequence of the conditions imposed on $g_{\delta,\ell}$ and the definitions of $\hat U$ and $\hat W$.
\end{proof}

We are now able to prove:
\begin{lem}\label{lem:Lyapunov}
Under Assumptions~\ref{assu:basic} and~\ref{assu:growth}, there exist $\delta_0,\ell_0>0$ such that for all $0<b<\beta$, there exists $V_b:\mathcal X\to \R_+$ such that:
\begin{itemize}
    \item $\inf_{\X} V_b >0$.
    \item For all $\varepsilon >0$, there exists $C>0$ such that \[C^{-1}e^{(1-\varepsilon)bH} \leqslant V_b \leqslant Ce^{(1+\varepsilon)bH}.\] 
    \item There exists $\alpha,K>0$ such that for all $0<\delta<\delta_0$, $0<\ell<\ell_0$, $(x,y)\in\Hd$:\[\E_{(x,y)}\left( V_b(\bar X_1, \bar Y_1) \right) \leqslant (1-\alpha \delta)  V_b(x,y) + \alpha\delta K.\]
\end{itemize}
\end{lem}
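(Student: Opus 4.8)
The plan is to transfer the continuous Lyapunov structure of \cite{ergodicity2} to the stopped chain by a one-step expansion, using in an essential way that the rejection rule in~\eqref{def:schema_stoped} forbids the chain from visiting the region where $\na U$ and its derivatives are large compared with $\delta^{-1}$. For the continuous ingredient I would recall from \cite{ergodicity2} (and the $H^1$ construction of \cite{BaudoinGordinaHerzog}) that for every $0<b<\beta$ there is $V_b\in\C^{\infty}(\X)$ with $\inf_{\X}V_b>0$, with $C_{\varepsilon}^{-1}e^{(1-\varepsilon)bH}\leqslant V_b\leqslant C_{\varepsilon}e^{(1+\varepsilon)bH}$ for all $\varepsilon>0$, and satisfying a continuous drift bound $LV_b\leqslant -\alpha_0 V_b+\sigma\mathbbm 1_{J}$ with $\alpha_0,\sigma>0$ and $J\subset\X$ compact; the first two bullet points of the lemma are then immediate. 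Moreover $V_b$ is built explicitly from $H$ and $\na U$, so Assumptions~\ref{assu:growth} and~\ref{assu:Hr-setting} yield $|\na^{j}V_b|\leqslant V_b(U^{m_j}+C_j)$ for each $j$, in particular $|\na\log V_b|\leqslant U^{m}+C$; since $U\leqslant\delta^{-l}$ on $\Hd$ and all derivatives of $U$ are dominated by powers of $U$, every quantity entering the subsequent computation is bounded on $\Hd$ by a fixed power $\delta^{-N}$.

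Next, a geometric preliminary. Fix a small $\kappa>0$, set $A=\{|G|\leqslant\delta^{-\kappa}\}$ (so $\mathbb P(A^{c})\leqslant e^{-c\delta^{-2\kappa}}$) and $\Hd^{+}=\{H\leqslant 2\delta^{-l}\}\subset\X$. For $z\in\Hd$, a continuity/connectedness argument run along the segment $t\mapsto z_t=z+t(E_{\delta}(z,G)-z)$ shows that, on $A$ and for $\delta,l$ small, the whole segment lies in $\Hd^{+}$ and $H(E_{\delta}(z,G))\leqslant H(z)+\epsilon_{\delta}$ with $\epsilon_{\delta}\to0$; here the absence of logarithmic singularities (Assumption~\ref{assu:growth}) is what keeps the sublevel sets of $U$ uniformly ``thick''. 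Combined with the bound on $|\na\log V_b|$, this gives $V_b(z_t)\leqslant(1+\zeta_{\delta})V_b(z)$ on $A$, with $\zeta_{\delta}\to0$ uniformly in $z\in\Hd$ and $t\in[0,1]$. Writing $Z_1=E_{\delta}(z,G)$ and noting $\E_z(V_b(\bar Z_1))-V_b(z)=\E_z[(V_b(Z_1)-V_b(z))\mathbbm 1_{Z_1\in\Hd}]$, I split this along $A$: on $A^{c}$ the bracket is $\leqslant 2\sup_{\Hd}V_b\leqslant Ce^{(1+\varepsilon)b\delta^{-l}}$, so choosing $\kappa>l/2$ makes that contribution $o(\delta V_b(z))$ uniformly (recall $V_b\geqslant\inf_{\X}V_b>0$).

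On $A$, decompose $\E_z[(V_b(Z_1)-V_b(z))\mathbbm 1_{Z_1\in\Hd}\mathbbm 1_A]=\E_z[(V_b(Z_1)-V_b(z))\mathbbm 1_A]+\E_z[(V_b(z)-V_b(Z_1))\mathbbm 1_{Z_1\notin\Hd}\mathbbm 1_A]$ and Taylor-expand the first expectation to third order about $z$ along the segment (licit by the previous step). The truncation $\mathbbm 1_A$ keeps $\E(G\mathbbm 1_A)=0$ by symmetry and perturbs $\E(GG^{T}\mathbbm 1_A)$ by only $O(e^{-c\delta^{-2\kappa}})$; the first- and second-order terms then assemble, exactly as for the unstopped symplectic Euler scheme, into $\delta LV_b(z)$ up to an error $O(\delta^{2-lN})V_b(z)$, and the cubic remainder is $O(\delta^{3/2-lN})V_b(z)$ (using $V_b(z_t)\leqslant 2V_b(z)$ on the segment). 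Hence, for $l$ and $\kappa$ small, $\E_z[(V_b(Z_1)-V_b(z))\mathbbm 1_A]=\delta LV_b(z)+o(\delta)V_b(z)\leqslant -\tfrac{\alpha_0}{2}\delta V_b(z)+\delta\sigma\sup_{J}V_b$.

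The main obstacle is the remaining ``rejection'' term $\E_z[(V_b(z)-V_b(Z_1))\mathbbm 1_{Z_1\notin\Hd}\mathbbm 1_A]$. On $A$ the event $\{Z_1\notin\Hd\}$ forces $z$ into the thin shell $\{\delta^{-l}-\epsilon_{\delta}<H(z)\leqslant\delta^{-l}\}$, where $V_b(z)$ is enormous and the chain can fail to move with non-vanishing probability, so no drift gain is available and the crude bound $|V_b(z)-V_b(Z_1)|\leqslant\zeta_{\delta}V_b(z)$ is too lossy ($\zeta_{\delta}$ being $o(1)$, not $o(\delta)$). The feature to exploit is that a rejected step is exactly one with $H(Z_1)>\delta^{-l}\geqslant H(z)$, i.e.\ the energy \emph{increases} along it: writing $V_b=e^{bH}m$ one has $\log V_b(Z_1)-\log V_b(z)=b(H(Z_1)-H(z))+\int_0^1\na\log m(z_t)\cdot(Z_1-z)\,\dd t$, with the first term $>0$ and $|\na\log m|$ small on the shell (of order $|\na U|^{-1}\to0$ there). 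Turning this into a quantitative bound $\E_z[(V_b(z)-V_b(Z_1))\mathbbm 1_{Z_1\notin\Hd}\mathbbm 1_A]=o(\delta)V_b(z)$ uniformly in $z\in\Hd$ — which requires a careful analysis of the one-step energy increment inside the $\sim\delta^{1/2}$-wide boundary layer of $\Hd$ and of the rejection probability there — is the technical heart of the argument. Granting it, summing the contributions gives $\E_z(V_b(\bar Z_1))\leqslant(1-\alpha\delta)V_b(z)+\alpha\delta K$ with, say, $\alpha=\alpha_0/4$ and $K$ a multiple of $\sup_{J}V_b$, uniformly in $0<\delta<\delta_0$, $0<l<l_0$, $z\in\Hd$.
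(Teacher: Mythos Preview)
Your strategy differs from the paper's, and the step you yourself flag as ``the technical heart'' --- bounding the rejection term $\E_z[(V_b(z)-V_b(Z_1))\mathbbm 1_{Z_1\notin\Hd}\mathbbm 1_A]$ --- is a genuine gap that you do not close. Your heuristic (rejected steps have $H(Z_1)>H(z)$, hence $V_b(Z_1)\gtrsim V_b(z)$, hence the term is essentially non-positive) is morally right, but near the boundary of $\Hd$ the rejection probability is of order one, not small, so the argument really hinges on the \emph{sign} of $V_b(z)-V_b(Z_1)$ on rejected steps rather than on its magnitude; the correction factor $m$ in $V_b=e^{bH}m$ can in principle flip that sign, and your bound on $\na\log m$ is only $o(1)$, not sharp enough to exclude this uniformly over the shell.

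The paper avoids this issue entirely by a regularization device that you are missing. It constructs $\hat H=\hat U+\hat W$ on all of $\R^{2d}$, equal to $H$ on $\Hd$, with $\hat H\geqslant\delta^{-l}$ on $\Hd^c$, $\hat H\leqslant 2\delta^{-l}$ globally, and $|\partial^{\alpha}\hat H|\leqslant C_{\alpha}\delta^{-l_{\alpha}}$ for some $l_{\alpha}<1/2$ (this is where Assumptions~\ref{assu:growth} and~\ref{assu:Hr-setting} and the choice of small $l$ enter). It then runs the \emph{unstopped} symplectic Euler scheme $\hat Z$ driven by $\na\hat U,\na\hat W$; for that scheme the one-step expansion of $\hat V_b(\hat Z_1)-\hat V_b(z)$ is carried out by expanding the exponential $\exp(\hat\phi(\hat Z_1)-\hat\phi(z))$ and controlling all moments of $\hat\phi(\hat Z_1)-\hat\phi(z)$ uniformly (possible precisely because all derivatives of $\hat H$ are globally bounded by negative powers of $\delta$ strictly below $1/2$), yielding $\E_z[\hat V_b(\hat Z_1)]\leqslant(1-\alpha\delta)\hat V_b(z)+\alpha\delta K$ for $z\in\Hd$. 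The passage to the stopped scheme is then the pointwise chain $V_b(\bar Z_1)=\hat V_b(\bar Z_1)\leqslant\hat V_b(\hat Z_1)$: equality on accepted steps since there $\bar Z_1=\hat Z_1$, and on rejected steps $\bar Z_1=z\in\Hd$ while $\hat Z_1\in\Hd^c$, so $\hat H(\bar Z_1)\leqslant\delta^{-l}\leqslant\hat H(\hat Z_1)$. This comparison encodes exactly the monotonicity you were reaching for, but the delicate Taylor step is performed on the globally bounded regularized scheme, so no boundary-layer or rejection-probability analysis is needed at all.
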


\begin{proof}
Fix $0<b<\beta$ and write:
\[
V_b(x,y) = \exp\left(b\phi(x,y)\right),\quad \hat V_b(x,y) = \exp\left(b\hat \phi(x,y)\right),
\]
where $\phi$ was defined in~\eqref{eq:phi} and $\hat \phi$ in Lemma~\ref{lem:regularized-pot}. It is different from the Lyapunov function of~\cite{ergodicity2} in that we need in the denominator of $\phi$ the term $1 + |\na U(x)|^2$ instead of $|\na U(x)|^2$. However, the proof written in~\cite{ergodicity2} can be immediately adapted to cover the case we consider, and it yields that there exists a compact set $K_1\subset \R^{2d}$ such that for all $(x,y)\notin K_1$:
\begin{equation}\label{eq:Lyapunov_explicit}
\frac{LV_b(x,y)}{V_b(x,y)} \leqslant -c,
\end{equation}
where $L$ is the generator defined in~\eqref{eq:generateur}. For any $\varepsilon>0$, we have 
\[
\left| \zeta \frac{y\cdot\na U(x)}{1+|\na U(x)|^2} \right| \leqslant \varepsilon |y|^2/2 + \frac{\zeta^2}{2\varepsilon}\times \frac{|\na U(x)|^2}{1+|\na U(x)|^2} \leqslant \varepsilon H + C,
\]
for some $C>0$, and this implies the first two points. Now all that remains to do to show the third point is to show that inequality~\eqref{eq:Lyapunov_explicit} remains true for the numerical scheme, as it stays in the region of space where $U=O( \delta^{-\ell})$. Recall the definition of $\hat U$ and $\hat W$ from Lemma~\ref{lem:regularized-pot}. We introduce the following numerical scheme, counterpart of the scheme~\eqref{def:schema_Euler} for the Hamiltonian $\hat H$:
\begin{equation}\label{def:schema_regu}
    \left\{ \begin{aligned} &\hat X_{n+1} = \hat X_n + \delta \na \hat W(Y_{n}), \\ &\hat Y_{n+1} = \hat Y_n -\delta\nabla \hat U(\hat X_n) - \delta\gamma \na \hat W(\hat Y_{n+1})  + \sqrt{2\gamma\beta^{-1}\delta}G_n, \end{aligned}\right.
\end{equation}
where $(G_n)$ is the same family of Gaussian random variable as in the definition of the scheme~\eqref{def:schema_stoped}. 
On the event $\left\{(\bar X_1,\bar Y_1)\neq (\hat X_1,\hat Y_1)\right\}$, the step of $(\bar X_1,\bar Y_1)$ was rejected, $(\hat X_1,\hat Y_1)\notin\Hd$, and we have using the third point of Lemma~\ref{lem:regularized-pot}
\begin{equation}\label{eq:comparaison-schemes}
\hat V_b(\bar X_1,\bar Y_1) = \hat V_b(x,y) \leqslant e^{b\delta^{-\ell}} \leqslant \hat V_b(\hat X_1,\hat Y_1).
\end{equation}
Suppose we have $K>0$, and $0<\alpha<1$ both independent of $\delta$ such that:
\[
\E_{(x,y)}\left( \hat V_b(\hat X_1, \hat Y_1) \right) \leqslant (1-\alpha \delta) \hat V_b(x,y) + \alpha\delta K,
\]
for all $(x,y)\in\Hd$.
Then, using that for all $(x,y)\in\Hd$, $V_b(x,y) = \hat V_b(x,y)$, and inequality~\eqref{eq:comparaison-schemes}, we would have for all $(x,y)\in\Hd$:
\begin{multline*}
  \E_{(x,y)}\left(  V_b( \bar X_1, \bar Y_1) \right) = \E\left( \hat V_b( \bar X_1, \bar Y_1) \right)  \leqslant  \E\left( \hat V_b(\hat X_1, \hat Y_1) \right) \\ \leqslant (1-\alpha \delta) \hat V_b(x,y) + \alpha\delta K = (1-\alpha \delta) V_b(x,y) + \alpha\delta K.
\end{multline*}
Thus we only have to show that the Lyapunov structure holds (partially) true for the regularized numerical scheme~\eqref{def:schema_regu}. In the sequel of this proof, \lj{the notation $o(\delta)$ will denote a quantity uniform in $(x,y)\in\R^{2d}$:
\[
f_{\delta}(x,y) = o(\delta) \Leftrightarrow \lim_{\delta\rightarrow0}\sup_{(x,y)\in\R^{2d}}\frac{\left|f_{\delta}(x,y)\right|}{\delta} = 0.
\]}
We have:
\begin{multline*}
 \hat V_b(\hat X_1,\hat Y_1) -  \hat V_b(x,y) =  \hat V_b(x,y)\left( \hat \phi(\hat X_1,\hat Y_1) - \hat \phi(x,y)   \right)\\ +\frac{1}{2}\hat V_b(x,y)\left( \hat \phi(\hat X_1,\hat Y_1) - \hat \phi(x,y)   \right)^2 + \hat V_b(x,y)\sum_{p\geqslant 3} \frac{1}{p!}\left( \hat \phi(\hat X_1,\hat Y_1) - \hat \phi(x,y)   \right)^p .  
\end{multline*}
A Taylor expansion on $\hat \phi$ yields that there exists $(\theta_1,\theta_2)\in\R^{2d}$, random, such that:
\begin{equation}\label{eq:taylor}
\hat \phi(\hat Z_1) = \hat \phi(z) + ( \hat Z_1-z)\cdot\na\hat \phi(z) + \frac{1}{2}\na^2\hat \phi(z)(\hat Z_1-z)^{\circ 2} + \frac{1}{6}\na^3\hat\phi(\theta_1,\theta_2)(\hat Z_1-z)^{\circ 3},
\end{equation}
where $z=(x,y)$ and $\hat Z_1=(\hat X_1,\hat Y_1)$. Condition~\eqref{eq:bound_modified_pot} yields that for $p\in\left\{1,2\right\}$:
\[
\E\po \po\na^3\hat\phi(\theta_1,\theta_2)(\hat Z_1-z)^{\circ 3}\pf^p \pf= o(\delta).
\]
Moreover, we have
\[
( \hat Z_1-z)\cdot\na\hat \phi(z) = \delta \na \hat W(y)\cdot\na_x\hat \phi -\delta \na \hat U(x)\cdot\na_y\hat\phi - \delta \gamma \na \hat W(y)\cdot\na_y\hat\phi + \sqrt{2\gamma\beta^{-1}\delta}G_1\cdot\na_y\hat\phi(x,y),
\]
and 
\[
\frac{1}{2}\na^2\hat \phi(z)(\hat Z_1-z)^{\circ 2} = \gamma\beta^{-1}\delta G_1\cdot \na^2_y\hat\phi(x,y)G_1 + o(\delta).
\]
Thus, taking the expectation of equation~\eqref{eq:taylor} yields
\begin{multline*}
\E\left( \hat \phi(\hat X_1,\hat Y_1) - \hat \phi(x,y)   \right) \\ = \delta \na \hat W(y)\cdot\na_x\hat \phi -\delta \na \hat U(x)\cdot\na_y\hat\phi - \delta \gamma \na \hat W(y)\cdot\na_y\hat\phi + \delta \gamma\beta^{-1}\Delta_y\hat\phi + o(\delta), 
\end{multline*}
as well as
\[
\E\po\left( \hat \phi(\hat X_1,\hat Y_1) - \hat \phi(x,y)   \right)^2\pf = 2 \delta\gamma\beta^{-1} |\na_y\phi|^2 + o(\delta).
\]
In the same spirit, the bounds~\eqref{eq:bound_modified_pot} and equation~\eqref{eq:taylor} yield:
\begin{align*}
    \E\left( \left( \hat \phi(\hat X_1,\hat Y_1) - \hat \phi(x,y)   \right)^p   \right) & \leqslant \sum_{\underset{k \text{ odd}}{k=0}}^p \binom{p}{k}(C\sqrt{\delta})^k \frac{k!}{2^{k/2}(k/2)!}(C\delta^{1-1/\ell})^{p-k} \\ &\leqslant C^p\sqrt{\delta}^p p!\sum_{\underset{k \text{ odd}}{k=0}}^p \frac{1}{(p-k)!(k/2)!2^{k/2}} \\ &\leqslant C (C\delta)^{p/2} p!.
\end{align*}
Hence for $\delta<C^{-1}$:
\[
\E\left( \sum_{p\geqslant 3} \frac{1}{p!}\left( \hat \phi(\hat X_1,\hat Y_1) - \hat \phi(x,y)   \right)^p   \right) \leqslant C \frac{(C\delta)^{3/2}}{1-C\delta} =  o(\delta).
\]
Finally, using that $\hat U = U$ and $\hat W = |\cdot|^2/2$ on $\Hd$, we get that for all $(x,y)\in \Hd$:
\begin{equation}\label{eq:Lyapunov_discret}
\E_{(x,y)}\po\hat V_b(\hat X_1,\hat Y_1)\pf -  \hat V_b(x,y) =  \hat V_b(x,y)\left(\delta \frac{LV_b(x,y)}{V_b(x,y)} + o(\delta) \right),
\end{equation}
where $o(\delta)$ is uniform in $(x,y)$. Hence, from equation~\eqref{eq:Lyapunov_explicit}, we have $\delta_0>0$ such that for all $(x,y)\in\Hd\setminus K_1$, $\delta<\delta_0$:
\[
\E_{(x,y)}\po\hat V_b(\hat X_1,\hat Y_1)\pf \leqslant (1-\alpha\delta)V_b(x,y),
\]
for some $\alpha>0$. The map $LV_b/V_b$ is continuous on the compact set $K_1$, hence is bounded, and inequality~\eqref{eq:Lyapunov_discret} yields that there exists $C>0$ such that for $(x,y)\in K_1$:
\[
\E_{(x,y)}\po\hat V_b(\hat X_1,\hat Y_1)\pf \leqslant V_b(x,y) + C\delta 
\]
which concludes the proof.
\end{proof}
}

This Lyapunov structure has an immediate corollary: the numerical scheme will stay away from the singularities uniformly on $\delta$, even though its forbidden area is a level set depending on $\delta$.

\begin{cor}\label{cor:integrability_scheme}
Under Assumptions~\ref{assu:basic} and~\ref{assu:growth}, there exist $\delta_0,\ell_0>0$ such that for all $0<b<\beta$, $0<\ell<\ell_0$, $z\in\mathcal X$:
\[
\sup_{0<\delta<\delta_0}\sup_{n\in\N} \E_{z}\left(e^{bH(\bar Z_n)}\right) < \infty. 
\]
Moreover, for all $z\in\mathcal X$, there exists $C,c>0$ such that for all $a>0$, $0<\delta<\delta_0$, $0<\ell<\ell_0$ and $n\in\N$:
\[
\mathbb P_{z}\left( H(\bar Z_n)\geqslant a \right) \leqslant Ce^{-ca}.
\]
\end{cor}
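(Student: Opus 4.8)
The plan is to deduce the corollary from the geometric drift inequality of Lemma~\ref{lem:Lyapunov} via the classical argument that a Lyapunov condition yields uniform-in-time exponential moment bounds. The preliminary observation, and the point where the stopped nature of the scheme matters, is that $\Hd$ is absorbing for the scheme~\eqref{def:schema_stoped}: at each step either the acceptance event $\{E_\delta(\bar X_n,\bar Y_n,G_n)\in\Hd\}$ holds, in which case $\bar Z_{n+1}=E_\delta(\bar X_n,\bar Y_n,G_n)\in\Hd$ by the very definition of that event, or it fails, in which case $\bar Z_{n+1}=\bar Z_n$. Hence if $z=\bar Z_0\in\Hd$, then $\bar Z_n\in\Hd$ almost surely for all $n$; and if $z\notin\Hd$, which forces $H(z)>\delta^{-l}$, the same dichotomy gives $H(\bar Z_n)\leqslant\max(H(z),\delta^{-l})=H(z)$ for all $n$, so that $\E_z(e^{bH(\bar Z_n)})\leqslant e^{bH(z)}$ and there is nothing to prove in that case.

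For the generic case $z\in\Hd$, I would fix $0<b<\beta$, pick $b<b'<\beta$ and $\varepsilon>0$ with $(1-\varepsilon)b'\geqslant b$, and take $V_{b'}$ from Lemma~\ref{lem:Lyapunov} with its constants $\alpha,K>0$ (shrinking $\delta_0$ so that $\alpha\delta<1$ for $0<\delta<\delta_0$). Since $\bar Z_n$ remains in $\Hd$, the drift inequality of Lemma~\ref{lem:Lyapunov} applies at every step, so the Markov property and the tower rule give
\[
\E_z\big(V_{b'}(\bar Z_{n+1})\big)\leqslant(1-\alpha\delta)\,\E_z\big(V_{b'}(\bar Z_{n})\big)+\alpha\delta K ,
\]
and iterating from $\bar Z_0=z$ yields $\E_z(V_{b'}(\bar Z_n))\leqslant V_{b'}(z)+K$ for all $n\in\N$ and all $0<\delta<\delta_0$. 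Combining this with the two-sided bound $e^{bH}\leqslant e^{(1-\varepsilon)b'H}\leqslant C\,V_{b'}$ from the second item of Lemma~\ref{lem:Lyapunov}, I obtain
\[
\sup_{0<\delta<\delta_0}\;\sup_{n\in\N}\;\E_z\big(e^{bH(\bar Z_n)}\big)\leqslant C\big(V_{b'}(z)+K\big)<\infty ,
\]
a bound depending on $z$ but not on $\delta$ or $n$; together with the case $z\notin\Hd$ above, this is the first assertion.

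The tail estimate then follows from Markov's inequality with a fixed exponent, say $b=\beta/2$: setting $C_z:=\sup_{0<\delta<\delta_0}\sup_{n\in\N}\E_z(e^{bH(\bar Z_n)})<\infty$ from the first part, for all $a>0$, $0<\delta<\delta_0$ and $n\in\N$,
\[
\mathbb P_z\big(H(\bar Z_n)\geqslant a\big)=\mathbb P_z\big(e^{bH(\bar Z_n)}\geqslant e^{ba}\big)\leqslant e^{-ba}\,\E_z\big(e^{bH(\bar Z_n)}\big)\leqslant C_z\,e^{-ba},
\]
which is the claim with $c=b$ and $C=C_z$. I do not expect a genuine obstacle here: the whole argument is routine once Lemma~\ref{lem:Lyapunov} is available, and the single delicate point is the passage from the drift inequality, stated only for initial conditions in $\Hd$, to one valid along the entire trajectory — precisely what the absorbing property of $\Hd$ provides, and the structural reason for introducing the stopped scheme.
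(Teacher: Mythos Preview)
Your proof is correct and follows the same route as the paper: pick $b<b'<\beta$, use the two-sided bound $e^{bH}\leqslant C\,V_{b'}$, iterate the drift inequality of Lemma~\ref{lem:Lyapunov} to get a uniform bound on $\E_z(V_{b'}(\bar Z_n))$, and deduce the tail estimate by Markov's inequality. The paper's version is terser---it records the conclusion $\E(V_{b'}(\bar Z_n))\leqslant C\max(V_{b'}(z),K)$ without spelling out the iteration---while you make explicit the absorbing property of $\Hd$ (which is what justifies applying the drift inequality at every step) and the trivial case $z\notin\Hd$; these are useful clarifications but not a different argument.
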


\begin{proof}
\lj{Fix $b<b'<\beta$. The second Lyapunov property of Lemma~\ref{lem:Lyapunov} implies that:
\[
\E_{z}\po e^{bH\po \bar X_n,\bar Y_n\pf}\pf \leqslant C\E_z\po V_{b'}\po \bar X_n, \bar Y_n \pf \pf,
\]
for some $C>0$, whereas an induction argument using the third property yields
\[
\E_z\po V_{b'}\po \bar X_n, \bar Y_n \pf \pf \leqslant \max\po V_{b'}(x,y),K \pf.
\]
Both inequality combined then gives the first point of the Corollary.}
The second point is a direct application of Markov inequality.
\end{proof}

The next lemma will allow us to do series expansion of $P_t(\bar Z_n)$ to prove Theorem~\ref{thm:num_approx_finite_time}. We write for $z_1,z_2\in\R^{2d}$, $[z_1,z_2]=\left\{ tz_1 + (1-t)z_2, t\in[0,1] \right\}$.

\begin{lem}\label{lem:control_rest}
For all $\kappa>0$, there exists $C,\delta_0>0$ such that for all $0<\delta<\delta_0$, $z_1, z_2\in \left\{ H \leqslant \kappa \ln{1/\delta} \right\}$, and $|z_1-z_2|\leqslant \delta^{\frac{2}{3}}$, all $z\in[z_1,z_2]$:
\[
H(z) \leqslant H(z_1) + C 
\]
\end{lem}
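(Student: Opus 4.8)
\textbf{Proof proposal for Lemma~\ref{lem:control_rest}.}

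The plan is to control the growth of $H$ along the short segment $[z_1,z_2]$ by a first-order Taylor estimate, using the bound $|z_1-z_2|\leqslant \delta^{2/3}$ together with the polynomial control on $\na U$ coming from Assumption~\ref{assu:growth}. Writing $z=(x,y)$ and $H(x,y)=U(x)+\tfrac12|y|^2$, for any $z\in[z_1,z_2]$ I would write $H(z)-H(z_1)=\int_0^1 \na H(z_1+s(z-z_1))\cdot(z-z_1)\,\dd s$ and hence
\[
H(z)\leqslant H(z_1) + |z-z_1|\sup_{w\in[z_1,z_2]} |\na H(w)| \leqslant H(z_1) + \delta^{2/3}\sup_{w\in[z_1,z_2]}\po |\na U(w_x)| + |w_y| \pf .
\]
So the whole point reduces to bounding $|\na U|$ and $|y|$ along the segment by something of order at most $\delta^{-2/3+\varepsilon}$, so that the product with $\delta^{2/3}$ stays bounded.

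For the kinetic part this is immediate: both $z_1,z_2$ satisfy $|y_i|^2/2\leqslant H(z_i)\leqslant \kappa\ln(1/\delta)$, so $|w_y|\leqslant \sqrt{2\kappa\ln(1/\delta)}$ on the whole segment, and $\delta^{2/3}\sqrt{\ln(1/\delta)}\to 0$. For the potential part I would first note that the segment $[z_1,z_2]$ may a priori leave $\D$, so I should argue that it does not: since $U(x_1),U(x_2)\leqslant \kappa\ln(1/\delta)$, the points $x_1,x_2$ lie in $\D_{n}$ with $n=\lceil\kappa\ln(1/\delta)\rceil$, a bounded set staying at positive distance (depending on $\delta$) from $\D^c$; I need the crude lower bound on this distance, which follows from Assumption~\ref{assu:growth} — the condition $c_\infty U^{2-2/\eta_\infty}+d_\infty\leqslant |\na U|^2$ forces $U$ to blow up at a controlled rate near $\D^c$, so a point with $U\leqslant \kappa\ln(1/\delta)$ cannot be closer than some $\delta^{m}$ to the boundary. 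Choosing the exponent in $|z_1-z_2|\leqslant \delta^{2/3}$ (or shrinking $\delta_0$) so that $\delta^{2/3}$ is much smaller than this distance guarantees $[z_1,z_2]\subset\D$ and moreover that $U$ stays of order $\ln(1/\delta)$ on the segment, hence $|\na U|\leqslant (c_0 U^{2+2/\eta_0}+d_0)^{1/2}\leqslant C(\ln(1/\delta))^{1+1/\eta_0}$ there. Then $\delta^{2/3}\cdot C(\ln(1/\delta))^{1+1/\eta_0}\to 0$, and taking $\delta_0$ small makes this quantity $\leqslant C$, as desired.

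The main obstacle is the boundary issue: making rigorous that a short segment between two points of moderate energy stays inside $\D$ and keeps $U$ of logarithmic size. This requires turning the growth lower bound $|\na U|^2\gtrsim U^{2-2/\eta_\infty}$ near $\D^c$ into an explicit lower bound on $\mathrm{dist}(x,\D^c)$ as a function of $U(x)$ — integrating the differential inequality $|\na U|\gtrsim U^{1-1/\eta_\infty}$ along a path to the boundary gives, for $\eta_\infty>1$, that $U(x)\gtrsim \mathrm{dist}(x,\D^c)^{-\eta_\infty/(\eta_\infty-1)}$ up to constants, equivalently $\mathrm{dist}(x,\D^c)\gtrsim U(x)^{-(\eta_\infty-1)/\eta_\infty}\gtrsim (\ln(1/\delta))^{-(\eta_\infty-1)/\eta_\infty}$, which is polylogarithmically small and thus still dominates $\delta^{2/3}$ for $\delta$ small. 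The remaining steps (Taylor bound, the kinetic estimate, absorbing constants) are routine. One subtlety to handle carefully: the Taylor argument needs $H$ to be $C^1$ on an open neighbourhood of the segment, which the above containment in $\D$ provides since $U\in\C^\infty(\D)$.
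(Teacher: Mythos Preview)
Your overall route is the paper's: bound $H(z)-H(z_1)$ by $\delta^{2/3}\sup_{[z_1,z_2]}|\nabla H|$, handle the kinetic part trivially, and control $|\nabla U|$ on the segment by $C(\ln(1/\delta))^{1+1/\eta_0}$ via Assumption~\ref{assu:growth}. The only substantive step is the one you yourself flag as ``the main obstacle'': showing the short segment stays in $\D$ with $U$ of logarithmic size along it.

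That step has a genuine gap. You invoke the \emph{lower} bound $|\nabla U|^2\geqslant c_\infty U^{2-2/\eta_\infty}+d_\infty$ and propose to ``integrate $|\nabla U|\gtrsim U^{1-1/\eta_\infty}$ along a path to the boundary'' to obtain $U(x)\gtrsim\mathrm{dist}(x,\D^c)^{-\eta_\infty/(\eta_\infty-1)}$. But a lower bound on $|\nabla U|$ gives no control on the directional derivative of $U$ along a prescribed segment, so there is no differential inequality to integrate; at best it bounds distances between level sets \emph{from above} along the gradient direction, which goes the wrong way. (For instance, $U(x)=|x|^{-\alpha}$ near $0$ satisfies the lower bound for any $\eta_\infty>1$, yet your claimed inequality fails whenever $\alpha<\eta_\infty/(\eta_\infty-1)$.) And even granting $[z_1,z_2]\subset\D$, your follow-up assertion that $U$ stays $O(\ln(1/\delta))$ on the segment is not a consequence of a distance-to-boundary bound. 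The correct tool is the \emph{upper} bound $|\nabla U|^2\leqslant c_0 U^{2+2/\eta_0}+d_0$, and this is precisely the paper's argument: with $\varphi(s)=U(x_1+s(x-x_1))$ one gets the scalar inequality $\varphi'(s)\leqslant c_1\delta^{2/3}(\varphi(s)+c_2)^{1+1/\eta_0}$, and integrating it from $\varphi(0)\leqslant\kappa\ln(1/\delta)$ shows in one stroke (treating $\eta_0>0$ and $\eta_0<-1$ separately) that $\varphi$ stays finite on $[0,1]$ --- so the segment lies in $\D$ --- and that $\varphi(1)\leqslant\kappa'\ln(1/\delta)$. After this correction, your Taylor and kinetic bounds finish the proof unchanged.
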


\begin{proof}
Fix $z_1,z_2\in \left\{ H \leqslant \kappa \ln{1/\delta} \right\}$, such that $|z_1-z_2|\leqslant \delta^{\frac{2}{3}}$. We have that $[z_1,z_2]\subset B(z_1,\delta^{\frac{2}{3}})$. We first show that \[B(z_1,\delta^{\frac{2}{3}}) \subset \left\{ H \leqslant\kappa' \ln{1/\delta} \right\},\] for some $\kappa'>0$. To this end, fix $z\in B(z_1,\delta^{\frac{2}{3}})$, and write for $0\leqslant s\leqslant 1$:
\[
\varphi(s) = U(z_1 + s(z-z_1)).
\]
Thanks to Assumption~\ref{assu:basic}, we have that $\varphi$ is finite and smooth for $s<s_0$ small enough. We show at the same time that we may have $s_0=1$ and that $\varphi(1)\leqslant \kappa' \ln(1/\delta)$. We have:
\begin{multline*}
\varphi'(s) = \na U(z_1 + s(z-z_1)) \cdot (z-z_1) \\ \leqslant |\na U(z_1 + s(z-z_1))|  |(z-z_1)| \leqslant \delta^{\frac{2}{3}}\sqrt{c_0\varphi^{2+\frac{2}{\eta_0}}+d_0} \leqslant \delta^{\frac{2}{3}} \sqrt{c_0}\po \varphi + \po\frac{d_0}{c_0}\pf^{\frac{\eta_0}{2+2\eta_0}} \pf^{1+\frac{1}{\eta_0}}.
\end{multline*}
Hence there is $c_1,c_2>0$ independent of $z_1,z$ and $\delta$ such that for $s\leqslant s_0$:
\[
\frac{\varphi'(s)}{\po \varphi + c_2 \pf^{1+\frac{1}{\eta_0}}} \leqslant c_1\delta^{\frac{2}{3}}.
\]
Integrating this last inequation yields:
\[
\frac{\eta_0}{\po \varphi(s) + c_2 \pf^{1/\eta_0}} \geqslant \frac{\eta_0}{\po \varphi(0) + c_2 \pf^{1/\eta_0}} - c_1\delta^{\frac{2}{3}}s.
\]
Whether $\eta_0<0$ or $\eta_0>1$, because $\varphi(0)\leqslant \kappa\ln(1/\delta)$, we have that there exists $\delta_0,\kappa'>0$ such that for all $0<\delta<\delta_0$, $\varphi(1)\leqslant \kappa'\ln(1/\delta)$. Hence using Assumption~\ref{assu:growth} again, we have that \[\forall\, z\in \B(z_1,\delta^{\frac{2}{3}}),\, |\na U(z)|\leqslant \kappa''(\ln(1/\delta))^{1+\frac{1}{\eta_0}},\]for some $\kappa''>0$. Using a Taylor expansion on $\varphi$, we get that for all $z_1\in\left\{ H \leqslant \kappa \ln{1/\delta} \right\}$ and $z\in B(z_1,\delta^{\frac{2}{3}})$:
\[
U(z) \leqslant U(z_1) + \kappa''\delta^{\frac{2}{3}}(\ln{1/\delta})^{\frac{1}{\eta_0}},
\]
and this concludes the proof with $C=\sup_{0<\delta<\delta_0}\kappa''\delta^{\frac{2}{3}}(\ln{1/\delta})^{\frac{1}{\eta_0}}$.
\end{proof}

\begin{proof}[Proof of theorem~\ref{thm:num_approx_finite_time}]
As in~\cite{Talay-Tubaro}, we only prove the case $k=1$ for simplicity, since for $k>1$, the proof is the same. Fix some function $f\in\mathcal P(U)$, and write $u(t,x,y) = P_tf(x,y)$. Recall that $u$ is solution to:
\begin{equation}\label{eq:kolmogorov}
    \left\{\begin{aligned}
    &\partial_t u = Lu,\\ &u(t=0,\cdot)=f,
    \end{aligned} \right.
\end{equation}
where $L$ is the generator defined by~\eqref{eq:generateur}. For the sequel of the proof, fix $t>0$. For all $n\in\N$, $\delta>0$ is such that $n\delta = t$. We can write the total error made by the numerical scheme as:
\begin{align*}
\E_z\left(f\left(\bar Z_n\right)\right) - \E_z\left(f\left(Z_t\right)\right) &= \E_z\left( u\left(0,\bar Z_n\right) - u\left(n\delta,z\right)\right) \\ &= \sum_{p=0}^{n-1}\E_z\left( u\left(p\delta,\bar Z_{n-p}\right) - u\left((p+1)\delta,\bar Z_{n-(p+1)}\right)\right). 
\end{align*}
To make a series expansion of the expectations in the sum, we need to distinguish between three cases. Fix some $\kappa >0$ to be determined later, $n,p\in\N$, $p\leqslant n$, and denote:
\begin{align*}
\mathcal A_1 &= \left\{ Z_{n-p},Z_{n-(p+1)}\in\left\{ H\leqslant \kappa \ln(1/\delta)\right\},|Z_{n-p}-Z_{n-(p+1)}|\leqslant \delta^{2/3}\right\}, \\
\mathcal A_2 &= \left\{ Z_{n-p},Z_{n-(p+1)}\in\left\{ H\leqslant \kappa \ln(1/\delta)\right\},|Z_{n-p}-Z_{n-(p+1)}|\geqslant \delta^{2/3}\right\}, \\
\mathcal A_3 &= \left\{ Z_{n-p}\notin\left\{ H\leqslant \kappa \ln(1/\delta)\right\}\right\}\cup \left\{ Z_{n-(p+1)}\notin\left\{ H\leqslant \kappa \ln(1/\delta)\right\}\right\}.
\end{align*}
The event $\mathcal{A}_1$ is the event on which we may do a series expansion of \[u\left(p\delta,\bar Z_{n-p}\right) - u\left((p+1)\delta,\bar Z_{n-(p+1)}\right),\] thanks to Lemma~\ref{lem:control_rest}. Let's first show that the two other event have very low probability as $\delta$ goes to $0$.
Theorem~\ref{thm:estimates} yields the existence of some $0<b<\beta$, $C,q>0$ such that:
\[|u-\int_{\X} f\dd\mu| \leqslant Ce^{-qt}e^{bH}.\]
Fix $q_1,q_2\geqslant 1$ such that 
\[
1/q_1 + 1/q_2 = 1,\qquad bq_1< \beta .
\]
Using Holder inequality we get:
\begin{multline*}
\left|\E_z\left(\left( u\left(p\delta,\bar Z_{n-p}\right) - u\left((p+1)\delta,\bar Z_{n-(p+1)}\right)\right)\mathbbm{1}_{\mathcal A_2}\right)\right| \\ \leqslant Ce^{-qp\delta} \left( \left(\E_z\left( e^{bq_1H\po\bar Z_{n-p}\pf}\right)\right)^{1/q_1} + \left(\E_z\left( e^{bq_1\bar H\po Z_{n-(p+1)}\pf}\right)\right)^{1/q_1} \right) \po\mathbb P\left( \mathcal A_2\right)\pf^{1/q_2}.
\end{multline*}
On the event $\left\{ H\left(\bar Z_{n-(p+1)}\right)\leqslant \kappa\ln(1/\delta) \right\}$, there is some $c>0$ such that 
\[
|\na U\left(\bar X_{n-(p+1)}\right)| + |Y_{n-(p+1)}| \leqslant C\po\ln(1/\delta)\pf^c
\]
and 
\[
\left|\bar Z_{n-k}-\bar Z_{n-(k+1)}\right| \leqslant C\delta \ln(1/\delta)^c + \sqrt{\delta} \left|G\right|
\]
where $G$ is a standard Gaussian random variable. Hence for $\delta$ small enough, there is $c,C>0$ such that:
\[
\mathbb P\left( \mathcal A_2\right) \leqslant \mathbb P\left( G \geqslant c\delta^{1/6} \right) \leqslant Ce^{-c\delta^{1/6}},
\]
and finally
\[
\left|\E_z\left(\left( u\left(p\delta,\bar Z_{n-p}\right) - u\left((p+1)\delta,\bar Z_{n-(p+1)}\right)\right)\mathbbm{1}_{\mathcal A_2}\right)\right| \leqslant Ce^{-c\delta^{1/6}}.
\]
Next, using Holder's inequality again:
\begin{multline*}
\E_z\left(\left( u\left(p\delta,\bar Z_{n-p}\right) - u\left((p+1)\delta,\bar Z_{n-(p+1)}\right)\right)\mathbbm{1}_{\mathcal A_3}\right) \\ \leqslant Ce^{-qp\delta} \left( \left(\E\left( e^{bq_1H\left(\bar Z_{n-p}\right)}\right)\right)^{1/q_1} + \left(\E\left( e^{bq_1\bar H\left(Z_{n-(p+1)}\right)}\right)\right)^{1/q_1} \right)  \left( \mathbb P\left( \mathcal A_3 \right) \right)^{1/q_2}.
\end{multline*}
Fix $b'<\beta$. We have, using Markov inequality:
\[
\mathbb P\left( \mathcal A_3 \right) \leqslant 2\sup_n \mathbb P\po e^{b'H(Z_n)} \geqslant e^{-b'\kappa \ln(1/\delta)} \pf \leqslant 2 \sup_n \E( e^{b'H(Z_n)} ) \delta^{b'\kappa} = C\delta^{b'\kappa}.
\]
We can now fix $\kappa>3q_2/b'$ to get that:
\[
\E_z\left(\left( u\left(p\delta,\bar Z_{n-p}\right) - u\left((p+1)\delta,\bar Z_{n-(p+1)}\right)\right)\mathbbm{1}_{\mathcal A_3}\right) = O(\delta^3).
\]
On the event $\mathcal A_1$, we may use Lemma~\ref{lem:control_rest} to make a series expansion of $u$. For $0\leqslant s \leqslant 1$, write
\[
\varphi(s) = u\left((p+1-s)\delta, \bar Z_{n-(p+1)} + s(\bar Z_{n-p}-\bar Z_{n-(p+1)}\right).
\]
Then:
\[
(\varphi(1)-\varphi(0))\mathbbm{1}_{\mathcal A_1} = \po \sum_{r=1}^5 \frac{\varphi^{(r)}(0)}{r!} + \frac{\varphi^{(6)}(\theta)}{6!}\pf \mathbbm{1}_{\mathcal A_1}, 
\]
for some $0\leqslant \theta \leqslant 1$. Theorem~\ref{thm:estimates} yields that there is $C> 0$, $0<b<\beta$ such that:
\[
|\varphi^{(6)}(\theta)|\mathbbm{1}_{\mathcal A_1} \leqslant C\po\delta^3 G^6 + \delta^{7/2}G^5\ln(1/\delta) + \cdots + \delta^6(\ln(1/\delta))^6\pf e^{bH(\bar Z)}\mathbbm 1_{\mathcal A_1}
\]
for some $\bar Z\in[\bar Z_{n-p},\bar Z_{n-(p+1)}]$. However, Lemma~\ref{lem:control_rest} also yields that \[
H(\bar Z)\mathbbm 1_{\mathcal A_1} \leqslant  H\po\bar Z_{n-p}\pf + C.
\]
Hence, Holder's inequality yields:
\[
\E\po |\varphi^{(6)}(\theta)|\mathbbm{1}_{\mathcal A_1}  \pf \leqslant C\delta^3 \sup_n\E\po e^{q_1bH\po \bar Z_n\pf} \pf^{\frac{1}{q_1}} = C'\delta^3.
\]
Using Holder's inequality as in the bound for the event $\mathcal A_2$ and $\mathcal A_3$, we get that:
\[
\E\po \mathbbm 1_{\mathcal A_1}\sum_{r=1}^5 \frac{\varphi^{(r)}(0)}{r!} \pf = \E\po \sum_{r=1}^5 \frac{\varphi^{(r)}(0)}{r!} \pf + O(\delta^3) = \delta^2\E\po \psi\po(p+1)\delta,\bar Z_{n-(p+1)}\pf\pf + O(\delta^3),
\]
where
\begin{multline*}
    \psi(s,z) = \po \na U + \gamma y \pf \cdot \po \na_y/2 - \na_x\pf  u + \frac{1}{2}\partial_t^2 u - \frac{1}{2}y\cdot \na_x\partial_t u + \frac{1}{2}\po \na U + \gamma y \pf \cdot \na_y^2y \po \na U + \gamma y \pf \\- \frac{1}{2}y\cdot \na_{x,y}^2 \po \na U + \gamma y \pf + \gamma\beta^{-1}\sum_{i=1}^d \partial_{x_i}\partial_{y_i}u - \frac{2}{3}\gamma\beta^{-1} \partial_t\Delta_yu \\- \frac{7}{12}\gamma\beta^{-1} \po \na U + \gamma y \pf\cdot \na_y\Delta_yu + \frac{1}{3}\gamma\beta^{-1}y\cdot\na_x\Delta_y u \\ + \frac{1}{2} \po\gamma\beta^{-1}\pf^2 \sum_{i=1}^d \partial^4_{y_i}u + \frac{1}{6}\po \gamma\beta^{-1} \pf^2 \sum_{i\neq j=1}^d \partial^2_{y_i}\partial_{y_j}^2u.
\end{multline*}
\lj{The expression of $\psi$ is found using the fact that $u$ solves~\eqref{eq:kolmogorov}.} Finally, we have that:
\[
\E_z\left(f\left(\bar Z_n\right)\right) - \E_z\left(f\left(Z_t\right)\right) = \delta^2\E\po \sum_{p=1}^n \psi\po p\delta,\bar Z_{n-p}\pf\pf + O(\delta^2).
\]
The rest follows as in the proof of Talay and Tubaro~\cite{Talay-Tubaro}, which we write for the sake of completeness. As $\psi$ is a function of the derivative of $u$, Theorem~\ref{thm:estimates} yields that:
\begin{equation}\label{eq:weak_conv}
    \left| \E_z\left(f\left(\bar Z_n\right)\right) - \E_z\left(f\left(Z_t\right)\right) \right|  \leqslant C \delta^2 \po \sum_{p=1}^n e^{-qp\delta} \E_z\po e^{bH(\bar Z_{n-p}} \pf+ 1\pf  \leqslant C\delta^2\po \frac{1}{\delta}  + 1 \pf \leqslant C\delta.
\end{equation}
The function $s\rightarrow \E_z\left( \psi(s,Z_{t-s}) \right)$ is smooth and is bounded, hence the weak convergence~\eqref{eq:weak_conv} and classical Riemann summation results yield that:
\[
\left| \delta\E\po \sum_{p=1}^n \psi\po p\delta,\bar Z_{n-p}\pf\pf -\int_0^t \E_z\left( \psi(s,Z_{t-s} \right) \dd s\right| = O(\delta),
\]
which concludes the proof.
\end{proof}

\subsection{Expansion for the invariant measure of the numerical scheme with rejection}

To establish an expansion of the invariant measure of the numerical scheme, we first need to prove its existence, and ergodicity of the scheme. For this purpose, we define a metric on $\mathcal{M}^1\left(\R^{2d}\right)$. Fix some $0<b<\beta$, and write for $z_1,z_2\in\R^{2d}$:
\[
\mathbf d(z_1,z_2) = \mathbbm{1}_{z_1\neq z_2}\po 1+ V_b(z_1) + V_b(z_2)\pf,
\]
where $V_b$ is the Lyapunov function from Lemma~\ref{lem:Lyapunov}. For any two probability measures $\mu, \nu$, we call $(Z_1,Z_2)$ a coupling of $\mu$ and $\nu$ if the law of $Z_1$ (resp. $Z_2$) is $\mu$ (resp. $\nu$). From the distance $\mathbf d$ on $\R^{2d}$, we define the corresponding Kantorovich distance on $\mathcal M^1(\R^{2d})$ by:
\[
W_{\mathbf d}(\mu,\nu) = \inf\left\{\mathbb E\po \mathbf d(Z_1,Z_2)\pf, (Z_1,Z_2) \text{ coupling of }\mu \text{ and }\nu \right\}.
\]
This distance makes $\mathcal M^1(\R^{2d})$ complete. Hence, if we show that the map $\mathcal Law(z)\mapsto \mathcal{L}aw(\bar Z_k)$, where $\bar Z_k$ is the numerical scheme given by~\eqref{def:schema_stoped} with initial condition $z$, for some $k\in \N$, is a contraction, then this would imply the existence of a unique stationary measure, as well as exponentially fast convergence towards it for this Kantorovitch distance. It can be shown that convergence for such a distance would imply ergodicity of the numerical scheme, in the sense of the convergence of the average 
\begin{equation}\label{eq:conv_average}
    \lim_{n\rightarrow\infty}\frac{1}{n}\sum_{k=1}^n\E_{(x,y)}\po f \po\bar X_k,\bar Y_k\pf\pf  =\mu(f).
\end{equation}
Write $K_2=\left\{ H\leqslant K\right\}$, where $K$ is the constant from Lemma~\ref{lem:Lyapunov}. Because $V_b$ is a Lyapunov function, we only need to show that there exists $c>0$, $k\in\N$, such that for all $z\in K_2$, we have the following Doeblin condition:
\begin{equation}\label{eq:Doeblin}
\mathbb P_z\po \bar Z_k \in \cdot \pf \geqslant c\lambda,
\end{equation}
where $\lambda$ stands for the Lebesgue measure, see for example~\cite[Theorem 24]{PDMP}. In other words, we want to show that the numerical scheme creates density.

\begin{lem}\label{lem:ergodicity}
Under Assumptions~\ref{assu:basic} and~\ref{assu:growth}, there exists $\delta_0,\ell_0>0$ such that for all $0<\delta<\delta_0$, $0<\ell<\ell_0$, the numerical scheme~\eqref{def:schema_stoped} admits a unique stationary measure $\mu_{\delta}\in\mathcal M^1(\R^{2d})$, and for all $(x,y)\in\Hd$ and $f:\Hd\to\R$ continuous:
\[
\lim_{n\rightarrow\infty} \frac{1}{n}\sum_{k=1}^n\E_{(x,y)}\po f \po\bar X_k,\bar Y_k\pf  \pf = \mu_{\delta}\po f \pf.
\]
\end{lem}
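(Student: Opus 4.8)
The plan is to apply a Harris-type ergodicity theorem. Lemma~\ref{lem:Lyapunov} already provides the Foster--Lyapunov drift $\E_z V_b(\bar Z_1)\leqslant(1-\alpha\delta)V_b(z)+\alpha\delta K$, so all that is missing is the local Doeblin condition~\eqref{eq:Doeblin} on the sublevel set $K_2=\{H\leqslant K\}$; granting it, the existence of a unique invariant measure $\mu_\delta$ together with geometric convergence of $\mathcal Law(\bar Z_n)$ towards it in the metric $W_{\mathbf d}$ follows from \cite[Theorem 24]{PDMP}. As a preliminary I would note that, by construction of~\eqref{def:schema_stoped}, an accepted step lands in $\Hd$ and a rejected one does not move, so a chain started in $\Hd$ stays in $\Hd$; since $\Hd$ is bounded (Assumption~\ref{assu:basic}) and $V_b\geqslant\inf_{\X}V_b>0$, the map $\mathbf d$ is a genuine complete metric on $\M^1(\R^{2d})$, the drift of Lemma~\ref{lem:Lyapunov} applies at every step, and it already forces a contraction of $W_{\mathbf d}$ as long as the two coupled chains sit outside $K_2$.

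The heart of the proof is the minorization, i.e.\ the fact that the scheme ``creates density''. A single step cannot: on the accepted event $\bar Z_1=(x+\delta\bar Y_1,\bar Y_1)$ lies in the $d$-dimensional affine subspace $\{x'=x+\delta y'\}$, because the Gaussian $G_0$ enters only the velocity. Two steps do. I would compute the Jacobian of the unstopped two-step map $\Phi_z:(G_0,G_1)\mapsto(\bar X_2,\bar Y_2)$; a short computation (Schur complement in the lower-right block) gives $|\det D\Phi_z|=(2\gamma\beta^{-1})^{d}\delta^{2d}$, in particular nonzero. Hence, on the event that both proposed steps are accepted, $\bar Z_2$ has a density with respect to Lebesgue measure, jointly continuous in the starting point $z$ (here one uses that for $z\in K_2$ the position stays in a fixed compact subset of $\D$, on which $U$ and all its derivatives are bounded, by Assumption~\ref{assu:Hr-setting}); and since $H(z)\leqslant K\ll\delta^{-l}$ on $K_2$, small values of $(G_0,G_1)$ keep the two proposed energies well below $\delta^{-l}$, so both acceptance indicators equal $1$ and this density is bounded below by a positive constant on a small ball around the deterministic image $\Phi_z(0,0)$.

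It then remains to upgrade this into a single minorizing measure valid for all $z\in K_2$. I would prepend a controllability step: since $\D$ is connected and the unstopped scheme is a consistent discretization of the Langevin dynamics, whose associated control system joins any two points of $\X$, there is $T_0>0$ such that, from any $z\in K_2$, a set of Gaussian increments of positive probability drives the chain in $N=\lceil T_0/\delta\rceil$ steps into a fixed small neighborhood of a reference point $z_\ast$ while keeping the whole trajectory, and all proposed states, inside a fixed compact subset of $\X$---hence, for $\delta$ and $l$ small, never activating the stopping rule. Composing these $N$ steps with the two spreading steps above yields $\mathbb P_z(\bar Z_k\in A)\geqslant c\,\lambda(A\cap B(z_\ast,\rho))$ for every $z\in K_2$ and Borel $A$, with $k=N+2$, which is~\eqref{eq:Doeblin}. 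Feeding the drift of Lemma~\ref{lem:Lyapunov} and this minorization into \cite[Theorem 24]{PDMP} produces a unique invariant probability $\mu_\delta$ and geometric convergence of $\mathcal Law(\bar Z_n)$ to $\mu_\delta$ in $W_{\mathbf d}$; as $W_{\mathbf d}$ dominates the total variation distance and $f$ is bounded on the bounded set $\Hd$, one obtains $\E_z f(\bar Z_n)\to\mu_\delta(f)$, and Ces\`aro summation gives the stated average. The main difficulty is the minorization, and inside it the uniformity over $K_2$: the Jacobian computation is routine, but producing one minorizing measure requires the controllability argument, which has to be run inside $\Hd$ so that no step is rejected along the way.
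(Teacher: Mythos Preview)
Your proposal is correct and follows the same overall strategy as the paper: combine the Lyapunov drift of Lemma~\ref{lem:Lyapunov} with a two-step Doeblin minorization on $K_2$ and invoke \cite[Theorem~24]{PDMP}. The differences are in execution. For the local density, you compute the Jacobian $|\det D\Phi_z|=(2\gamma\beta^{-1})^{d}\delta^{2d}$ directly, which is cleaner than the paper's route, where the map $(g_0,g_1)\mapsto\bar Z_2$ is written as a perturbation of an explicit invertible linear map and locally inverted via a Banach fixed-point argument. For the globalization, the paper covers $K_2$ by balls of radius $\delta$, establishes the two-step minorization on each ball, and then chains through a finite sequence of overlapping balls using connectedness of $K_2$; you instead invoke controllability of the continuous Langevin control system and discretize the control trajectory over $N=\lceil T_0/\delta\rceil$ steps before appending the two spreading steps. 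Both arguments are valid and both need $O(1/\delta)$ steps; the paper's version is entirely self-contained at the discrete level, while yours is shorter once one grants the approximate-controllability statement. The one point you should make explicit is that the controls steering the chain from $z\in K_2$ to a neighborhood of $z_\ast$ can be chosen bounded uniformly in $z$ and $\delta$, so that the required Gaussian increments lie in a fixed bounded set and the $N$-step event has strictly positive probability for each fixed $\delta$ (this is all that is needed, since no uniformity in $\delta$ of the minorization constant is claimed).
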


\begin{proof}
As explained before, it suffices to show the so-called Doeblin condition~\eqref{eq:Doeblin}. To achieve such inequality, we first begin by showing, using a controllability argument, that this is locally true, in the sense that: there exits $c>0$ such that for all $z\in K_2$, $z_0\in B(z,\delta)$ and $A\subset B(z,\delta)$,
\[
\mathbb P_{z_0}\po \bar Z_2 \in A \pf \geqslant c\lambda(A).
\]
To this end, we want to show that there exists $(g_0,g_1)$ such that if $\sqrt{2\gamma\beta^{-1}\delta}(G_0,G_1)=(g_0,g_1)$ and $\bar Z_0=z_0$, then $\bar Z_2 = z_1$. Since Gaussian random variables have a density with respect to the Lebesgue measure, then if $z_1\mapsto (g_0,g_1)$ is a diffeomorphism, the result will hold. Fix $z_0,z_1\in B(z,\delta)$ and write:
\[
F(x,y) = \po x + \delta \po y - \delta\na U(x) - \delta\gamma y  \pf , y - \delta \na U(x) - \delta\gamma y \pf,
\]
and $\mathbf G(g_0,g_1) = \po \mathbf G_1(g_0,g_1),\mathbf G_2(g_0,g_1)\pf$ where
\[
\mathbf G_1(g_0,g_1) = \delta(2-\delta\gamma)g_0 - \delta^2 \na U\po x_0 + \delta \po y_0 -\delta U(x_0) - \delta\gamma y_0 + g_0 \pf \pf + \delta  g_1,
\]
and
\[
\mathbf G_2(g_0,g_1) = (1-\delta\gamma)g_0 - \delta \na U\po x_0 + \delta \po y_0 -\delta U(x_0) - \delta\gamma y_0 + g_0 \pf \pf + g_1.
\]
Now, the goal is to show that there exists $g_0,g_1\in\R^{d}$ such that:
\begin{equation}\label{eq:condi_control}
    H \po F(z_0) + \po \delta g_0, g_0 \pf \pf \leqslant \delta^{-\ell},
\end{equation}
and 
\begin{multline}\label{eq:controle}
\mathbf G(g_0,g_1) = z_1-z_0 \\ +\po \delta^2 (2 - \gamma \delta) \na U(x_0) - \delta (2-3\delta\gamma + (\delta\gamma)^2)y_0 , \delta (1 - \gamma \delta) \na U(x_0) + (2\gamma\delta - (\gamma\delta)^2)y_0  \pf.
\end{multline}
We have:
\[
\mathbf G  = \tilde G \po I_d + \delta \bar G \pf,
\]
where $\tilde G$ is the invertible matrix:
\[
\tilde G = \begin{pmatrix} 2\delta I_d & \delta I_d \\ I_d & I_d \end{pmatrix}, \qquad \tilde G^{-1} = \begin{pmatrix} \delta^{-1} I_d & - I_d \\ -\delta^{-1} I_d & 2I_d \end{pmatrix},
\]
and where $\bar G$ and its derivative are bounded on $B(0,\rho)$, for all $\rho>0$, uniformly with respect to $\delta<\delta_0$ and $z_0\in K_2$. Write $h = I_d + \delta \bar G$. The goal now is to show that for all $\rho_2>0$, there exists $\rho_1>0$ such that $h$ is a diffeomorphism from a neighborhood $W\subset B(0,\rho_1)$ of $0$ to $B(h(0),\rho_2)$, by following the proof of the local inverse theorem. To this end, for $v,g\in\R^{2d}$, write:
\[
\phi_{v}(g) = g - (d_0h)^{-1}(h(g) - v).
\]
There exists $\delta_0>0$ such that, for all $z_0\in K_2$ and $0<\delta<\delta_0$, this function is well defined, i.e. $d_0h$ is invertible. For all $\rho_1>0$, there exists $\delta_0>0$ such that for all $\delta<\delta_0$, $g\in B(0,\rho_1)$, $||| d_g\phi_v||| \leqslant 1/2$. Hence for $g,g'\in B_{\R^{2d}}(0,\rho_1)$, we have 
\[|\phi_{v}(g)-\phi_{v}(g')| \leqslant \frac{1}{2}|g-g'|.\] 
For all $\rho_2>0$, $v\in B(h(0),\rho_2)$, and $\delta$ small enough we have that 
\[|(d_0h)^{-1}(h(0) - v)|<2\rho_2.\]
This implies that if $\rho_1>4\rho_2$, then for all $g\in B(0,\rho_1) $, we have $\phi_{v}(g)\in B(0,\rho_1)$:
\[
|\phi_{v}(g)| \leqslant |\phi_{v}(g)-\phi_{v}(0)| + |\phi_{v}(0)| \leqslant \frac{1}{2}|g| + 2\rho_2 < \rho_1.
\]
Hence we may apply Banach fixed point theorem to get that for all $v\in B(0,\rho_2)$, there exists $g_v\in B(0,\rho_1)$ such that $\phi_{v}(g_v) = g_v$, or equivalently $h(g_v)=v$. The fact that $v\mapsto g_v$ is smooth is proved in the same way as in the proof of the inverse function theorem.
Thus, for all $\rho>0$, there exists $\delta_0,r>0$ such that for all $z_0\in K_2$, $0<\delta<\delta_0$, there exists a bounded neighborhood $W\subset B_{\R^{2d}}(0,r)$ of $0$, such that $G$ is a diffeomorphism from $W$ to $B_{\R^{2d}}(G(0,0),\rho\delta)$. We fix
\[\rho> 3+ 2 \sup_{(x,y)\in K_2}|y| + |\na U(x)|.\]
Write $I(z_0,z_1)$ for the left hand side of Equation~\eqref{eq:controle}. For $\delta$ small enough, $I\in B(G(0,0),\rho\delta)$ for all $z_0,z_1\in B(z,\delta)$, so that Equation~\eqref{eq:controle} has a solution $G^{-1}(I(z_0,z_1))$. The fact that $W\subset B(0,r)$ yields condition~\eqref{eq:condi_control} for $\delta<\Delta/r$, where $\Delta$ is the distance between $K_2$ and $\mathcal D^c$. Finally, because $(G_0,G_1)$ has a positive density, for $z_0\in B(z,\delta)$ and $A\subset B(z,\delta)$ we have:
\begin{multline*}
\mathbb P_{z_0}\po \bar Z_2 \in A \pf \geqslant \mathbb P\po (G_0,G_1) \in G^{-1}\circ I(\cdot,z_0)(A) \pf  \\\geqslant c\int_{G^{-1}\circ I(\cdot,z_0)(A)}\dd z' = c\int_A  |\det(\na G\circ I(z',z_0))|^{-1} \dd z'  \geqslant c'\lambda(A).
\end{multline*}
Now we are able to show condition~\eqref{eq:Doeblin}. Fix $\delta$ small enough so that the local Doeblin condition holds true. Since $K_2$ is compact, there exists $z_1,\cdots,z_N$ such that $K_2= \cup_{i=1}^N B(z_i,\delta)$. For all measurable $A\subset K_2$, we have that 
\[\lambda (A) \leqslant \sum_{i=1}^N \lambda (A\cap B(z_i,\delta)).\] 
Hence we may restrict ourselves to $A\subset B(z_j,\delta)$, for some $j\in \llbracket 1,N \rrbracket$. Up to increasing the value of $K$, we may suppose that $K_2$ is connected. In this case, there exists $k\in\N$ and $\varepsilon>0$ such that for all $z\in B(z_i,\delta)$ and $A\subset B(z_j,\delta)$, there exists a finite sequence $z'_0,z'_1,\dots,z_k'\in K_2$ satisfying the following condition: $z'_0=z$, $z'_k\in B(z_j,\delta)$, and for all $0\leqslant p \leqslant k-1$, there is $i_p$ such that $B(z_p,\varepsilon),B(z_{p+1},\varepsilon) \subset B(z_{i_p},\delta)$. Hence we have: 
\[
\mathbb P \po Z_{2(k+1)} \in A \pf \geqslant \mathbb P \po Z_{2p} \in B(z'_p,\varepsilon), 1\leqslant p\leqslant k, Z_{2(k+1)} \in A \pf \geqslant (cw)^k\lambda(A),  
\]
where $w$ is the volume of $B_{\R^{2d}}(0,\varepsilon)$, and this concludes the proof.
\end{proof}

All function $f\in\mathcal P(U)$ are continuous on $\Hd$, thus we do indeed have the convergence~\eqref{eq:conv_average} for such function.
To prove Theorem~\ref{thm:num_approx_stationary}, we first need weak convergence of the stationary measure of the numerical scheme, for functions that depends on time.

\begin{lem}\label{lem:weak_conv_stationary}
Let $g\in\mathcal C^{\infty}\po \R_+\times\X,\R\pf$ be such that for all $\alpha\in\N^{2d}$, there exists $C_{\alpha},q_{\alpha}>0$ and $b_{\alpha}<\beta$ such that:
\[
|\partial^{\alpha}g(t,z)|\leqslant C_{\alpha}e^{-q_{\alpha}t}e^{b_{\alpha}H(z)}.
\]
Under Assumption~\ref{assu:basic},~\ref{assu:growth} and~\ref{assu:Hr-setting}, there exists $C>0$ such that for $\delta$ small enough:
\[
\left| \mu_{\delta}(g(t,\cdot)) - \mu(g(t,\cdot))  \right| \leqslant Ce^{-qt}\delta.
\]
\end{lem}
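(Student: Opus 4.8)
The plan is to exploit the invariance of $\mu_\delta$ under the numerical scheme and the invariance of $\mu$ under the continuous semi-group $(P_t)$, and to bridge the two by a telescoping argument of Talay--Tubaro type, now run "from stationarity" rather than from a fixed point. Fix $t>0$. Since $\mu_\delta$ is invariant for the chain $(\bar Z_n)$ (Lemma~\ref{lem:ergodicity}) and $\mu$ is invariant for $(P_s)$, for any $n$ with $n\delta = t$ one may write, using $g(t,\cdot)=P_0 g(t,\cdot)$ and $u(s,z):=P_s g(t,z)$ which solves $\partial_s u = L u$,
\[
\mu_\delta(g(t,\cdot)) - \mu(g(t,\cdot)) = \int \Big( \E_z\big(u(0,\bar Z_n)\big) - \E_z\big(u(n\delta, z)\big) \Big)\,\mu_\delta(\mathrm{d}z) = \sum_{p=0}^{n-1} \int \E_z\Big( u(p\delta, \bar Z_{n-p}) - u((p+1)\delta, \bar Z_{n-p-1}) \Big)\,\mu_\delta(\mathrm{d}z),
\]
where I used $\int \E_z(u(n\delta,z))\,\mu_\delta(\mathrm{d}z) = \int u(n\delta,z)\,\mu_\delta(\mathrm{d}z)$ and, by stationarity of $\mu_\delta$, $\int \E_z(u(0,\bar Z_n))\,\mu_\delta(\mathrm{d}z) = \int u(0,z)\,\mu_\delta(\mathrm{d}z) = \mu_\delta(g(t,\cdot))$. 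Actually the cleaner route, which I would follow, is: $\mu_\delta(g(t,\cdot)) = \int \E_z(g(t,\bar Z_m))\,\mu_\delta(\mathrm{d}z)$ for every $m$, and let $m\to\infty$; combined with $\mu(g(t,\cdot)) = \lim_{s\to\infty} P_s g(t,\cdot)$ pointwise this reduces the problem to controlling $\int (\E_z(g(t,\bar Z_m)) - P_{m\delta} g(t,z))\,\mu_\delta(\mathrm{d}z)$ uniformly in $m$ and passing to the limit — but that again unfolds into the same telescoping sum with $n$ replaced by $m$.

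The heart of the argument is then exactly the one-step error estimate already carried out in the proof of Theorem~\ref{thm:num_approx_finite_time}: splitting on the events $\mathcal A_1,\mathcal A_2,\mathcal A_3$ (defined there with $H\leqslant \kappa\ln(1/\delta)$), one gets that each summand is $\delta^2\E_z(\psi((p+1)\delta,\bar Z_{n-p-1})) + O(\delta^3)$ where $\psi$ is the explicit second-order term, with the $O(\delta^3)$ uniform. The key inputs are Theorem~\ref{thm:estimates} — which here must be applied to $u(s,z)=P_s g(t,z)$, giving $|\partial^\alpha(u(s,z)-\mu(g(t,\cdot)))|\leqslant C e^{-q(s+t)} e^{bH(z)}$ since $g(t,\cdot)\in\mathcal P(U)$ with $\mu(g(t,\cdot))$ decaying like $e^{-qt}$ — and Corollary~\ref{cor:integrability_scheme}, providing $\sup_{n}\E_z(e^{b q_1 H(\bar Z_n)})<\infty$ and hence, after integrating against $\mu_\delta$ and using that $\mu_\delta$ has finite $e^{bH}$-moments (itself a consequence of Lemma~\ref{lem:Lyapunov}), the summability. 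Summing over $p$: the exponential factor $e^{-q(p\delta+t)}$ from the semi-group estimate makes $\sum_p \delta^2 e^{-q(p\delta+t)} \leqslant C\delta e^{-qt}$, which is precisely the claimed bound; the $O(\delta^3)$ remainders, of which there are $n=t/\delta$, sum to $O(\delta^2)$, and the contributions of $\mathcal A_2,\mathcal A_3$ are super-polynomially small in $\delta$ times the same $e^{-qt}$ weight.

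The main obstacle — and the reason this is stated as a separate lemma rather than a one-line corollary — is making the interchange of limits rigorous: one integrates the telescoping identity against $\mu_\delta$, but $\mu_\delta$ depends on $\delta$, so all the moment bounds ($\E e^{bH}$ against $\mu_\delta$, and the $\mathcal A_2,\mathcal A_3$ probability estimates) must be shown uniform in $\delta<\delta_0$; this is exactly what Lemma~\ref{lem:Lyapunov} and Corollary~\ref{cor:integrability_scheme} were designed to give, since $V_b$ is a Lyapunov function for the chain with constants independent of $\delta$, so $\mu_\delta(V_b)\leqslant K$ uniformly. A secondary technical point is that one needs the time-decay of every derivative of $u(s,\cdot)$ to be uniform in the "parameter" $t$ appearing in $g(t,\cdot)$: this follows because the bound from Theorem~\ref{thm:estimates} applied to $g(t,\cdot)$ produces a constant $C$ that one must track as depending on the constants $C_\alpha$ of the hypothesis on $g$ but not on $t$, together with the extra decay $e^{-q_\alpha t}$ coming from $g$ itself — so the final constant genuinely carries the factor $e^{-qt}$. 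Once these uniformities are in place the computation is identical to that in Theorem~\ref{thm:num_approx_finite_time}, and I would simply refer to it rather than reproduce it.
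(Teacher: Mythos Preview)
Your first displayed identity is not correct as written: with $n\delta=t$ fixed, $\int u(n\delta,z)\,\mu_\delta(\mathrm{d}z)=\mu_\delta(P_t g(t,\cdot))$, which is not $\mu(g(t,\cdot))$. You catch this immediately and your ``cleaner route'' (let $m\to\infty$, use $P_{m\delta}g(t,\cdot)\to\mu(g(t,\cdot))$ in $L^1(\mu_\delta)$ via Theorem~\ref{thm:estimates} and dominated convergence, then bound the telescoping sum uniformly in $m$) is sound. It does require the extra input that $\mu_\delta(e^{bH})$ is bounded uniformly in $\delta$, which follows from Lemma~\ref{lem:Lyapunov} by iterating and passing to the limit, but is not stated anywhere in the paper.

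The paper proceeds differently: it never integrates against $\mu_\delta$. Instead it fixes a single initial point $z\in\Hd$, writes for each $k$ the telescoping identity
\[
\E_z\big(u(0,\bar Z_k)\big)-u(k\delta,z)=\sum_{p=0}^{k-1}\E_z\big(u(p\delta,\bar Z_{k-p})-u((p+1)\delta,\bar Z_{k-(p+1)})\big),
\]
takes the Ces\`aro mean $\tfrac{1}{n}\sum_{k=1}^n$ of both sides, and lets $n\to\infty$. The left-hand side converges to $\mu_\delta(g(t,\cdot))-\mu(g(t,\cdot))$ by the ergodicity statements of Lemma~\ref{lem:ergodicity} (for the scheme) and Theorem~\ref{thm:Hr-hypo} (for the continuous process). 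The right-hand side is handled exactly as in Theorem~\ref{thm:num_approx_finite_time}: each inner sum is bounded by $Ce^{-qt}\delta$ uniformly in $k$, hence so is the Ces\`aro average. The advantage of the paper's route is that it uses Corollary~\ref{cor:integrability_scheme} directly (stated for a fixed starting point) and sidesteps any moment estimate on $\mu_\delta$ itself; your route is slightly more direct conceptually but needs that additional uniform moment bound on the invariant measure.
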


\begin{proof}
The proof is very close to the proof Theorem~\ref{thm:num_approx_finite_time}, hence we will omit some details. We fix $g$ satisfying the assumptions of Lemma~\ref{lem:weak_conv_stationary}, $t>0$, and we write $u(s,z)=\E_z(g(t,Z_s))$, where $Z=(X,Y)$ is the continuous process~\eqref{eq:langevin}. $g(t,\cdot)\in\mathcal P(U)$, and we have for any $n\in\N$:
\begin{equation}\label{eq:dev_ergo}
\frac{1}{n}\sum_{k=1}^n\E_z\left( u\left(0,\bar Z_k\right) - u\left(k\delta,z\right)\right) = \frac{1}{n}\sum_{k=1}^n\sum_{p=0}^{k-1}\E_z\left( u\left(p\delta,\bar Z_{k-p}\right) - u\left((p+1)\delta,\bar Z_{k-(p+1)}\right)\right).
\end{equation}
Lemma~\ref{lem:ergodicity} yields that:
\[
\frac{1}{n}\sum_{k=1}^n\E_z\left( u\left(0,\bar Z_k\right)\right) = \frac{1}{n}\sum_{k=1}^n\E_z\left( g \left(t,\bar Z_k\right) \right) \underset{n\rightarrow\infty}{\rightarrow} \mu_{\delta}(g(t,\cdot)).
\]
The continuous process is also ergodic, as shown by Theorem~\ref{thm:Hr-hypo}. Hence we have:
\[
\frac{1}{n}\sum_{k=1}^n\E_z\left( u\left(k\delta,z\right)\right) = \frac{1}{n}\sum_{k=1}^nP_{k\delta}g(t,\cdot)(z) \underset{n\rightarrow\infty}{\rightarrow} \mu(g(t,\cdot)).
\]
As in the proof of Theorem~\ref{thm:num_approx_finite_time}, a Taylor expansion yields:
\[
\left| \sum_{p=0}^{k-1}\E_z\left( u\left(p\delta,\bar Z_{k-p}\right) - u\left((p+1)\delta,\bar Z_{k-(p+1)}\right)\right) \right| \leqslant Ce^{-qt}\delta,
\]
and letting $n$ go to infinity in~\eqref{eq:dev_ergo} concludes the proof.
\end{proof}

\begin{proof}[Proof of Theorem~\ref{thm:num_approx_stationary}]
Fix $f\in\mathcal P(U)$ and write $u(t,z)=\E_z(f(Z_t))$, where $Z=(X,Y)$ is the continuous process~\eqref{eq:langevin}. We write a Taylor expansion of higher order than in the proof of Lemma~\ref{lem:weak_conv_stationary} to get:
\[
\sum_{p=0}^{k-1}\E_z\left( u\left(p\delta,\bar Z_{k-p}\right) - u\left((p+1)\delta,\bar Z_{k-(p+1)}\right)\right) = \delta^2\sum_{p=0}^{k-1}\E_z\left( \psi \left((p+1)\delta, \bar Z_{k-(p+1)} \right) \right) + O(\delta^2),
\]
where the term $O(\delta^2)$ is independent of $k$. Hence we may write:
\begin{align*}
\frac{1}{n}\sum_{k=1}^n\sum_{p=0}^{k-1}\E_z&\left( u\left(p\delta,\bar Z_{k-p}\right) - u\left((p+1)\delta,\bar Z_{k-(p+1)}\right)\right) \\&= \frac{1}{n}\sum_{k=1}^n\sum_{p=0}^{k-1}\delta^2\E_z\left( \psi \left((p+1)\delta, \bar Z_{k-(p+1)} \right) \right) + O(\delta^2) \\&= \frac{\delta^2}{n}\sum_{i=1}^n\sum_{j=0}^{n}\E_z\left( \psi \left((i\delta, \bar Z_{j} \right) \right) - \frac{\delta^2}{n}\sum_{k=1}^n\sum_{p=k}^{n}\E_z\left( \psi \left((p+1)\delta, \bar Z_{k-(p+1)} \right) \right) + O(\delta^2).
\end{align*}
Using Theorem~\ref{thm:estimates} and Corollary~\ref{cor:integrability_scheme}, we bound:
\begin{align*}
    \left| \frac{1}{n}\sum_{k=1}^n\sum_{p=k}^{n}\E_z\left( \psi \left((p+1)\delta, \bar Z_{k-(p+1)} \right) \right) \right| &\leqslant \frac{1}{n}\sum_{k=1}^n\sum_{p=k}^{n} \E_z\left( Ce^{-q(p+1)\delta}e^{bH\po \bar Z_{k-(p+1)} \pf} \right) \\ &\leqslant \frac{C'}{n} \sum_{k=1}^n \sum_{p=k}^{\infty} e^{-q(p+1)\delta} \\ & \leqslant \frac{C''}{n} \sum_{k=1}^n e^{-qk\delta} \underset{n\rightarrow\infty}{\rightarrow} 0.
\end{align*}
Ergodicity of the numerical scheme yields that 
\[
\frac{1}{n}\sum_{j=0}^{n}\E_z\left( \psi \left((i\delta, \bar Z_{j} \right) \right) \underset{n\rightarrow\infty}{\rightarrow} \mu_{\delta}\po \psi(i\delta, \cdot)\pf,
\]
and Theorem~\ref{thm:estimates} and Corollary~\ref{cor:integrability_scheme} that:
\[
\left| \frac{1}{n}\sum_{j=0}^{n}\E_z\left( \psi \left((i\delta, \bar Z_{j} \right) \right) \right| \leqslant Ce^{-qi\delta}.
\]
Hence the dominated convergence theorem yields that:
\[
\frac{1}{n}\sum_{i=1}^n\sum_{j=0}^{n}\E_z\left( \psi \left((i\delta, \bar Z_{j} \right) \right) \underset{n\rightarrow\infty}{\rightarrow} \sum_{i\geqslant 0} \mu_{\delta}\po \psi(i\delta, \cdot)\pf.
\]
Now Lemma~\ref{lem:weak_conv_stationary} and Riemann sum tells us that:
\[
\sum_{i\geqslant 0} \mu_{\delta}\po \psi(i\delta, \cdot)\pf = \int_0^{\infty} \mu(\psi(t,\cdot)) \dd t + O(\delta).
\]
Letting $n$ go to infinity in Equation~\eqref{eq:dev_ergo} (with $g(t,\cdot)=f$) then concludes the proof.
\end{proof}

\subsection*{Acknowledgement.} This works is supported by the French ANR grant SWIDIMS (ANR-20-CE40-0022).






\bibliography{biblio.bib}
\bibliographystyle{plain}

\end{document}